\newtheorem{maintheorem}{Main Theorem.}
   \let\temp\relax
   \let\temp 
 \chardef\EPSFCatAt\the\catcode`\@
 \chardef\C@tColon\the\catcode`\:
 \chardef\C@tSemicolon\the\catcode`\;
 \chardef\C@tQmark\the\catcode`\?
 \chardef\C@tEmark\the\catcode`\!
 \chardef\C@tDqt\the\catcode`\"
 \def\PunctOther@{\catcode`\:=12
   \catcode`\;=12 \catcode`\?=12 \catcode`\!=12 \catcode`\"=12}
 \let\wlog@ld\wlog 
 \def\wlog#1{\relax} 
 \newdimen\XShift@ \newdimen\YShift@ 
 \newtoks\Realtoks
 \newdimen\Wd@ \newdimen\Ht@
 \newdimen\Wd@@ \newdimen\Ht@@
 \newdimen\TT@
 \newdimen\LT@
 \newdimen\BT@
 \newdimen\RT@
 \newdimen\XSlide@ \newdimen\YSlide@ 
 \newdimen\TheScale  %% secretly scale in mils: 1pt= 1mil 
 \newdimen\FigScale  %% secretly scale in mils: 1pt= 1mil 
 \newdimen\ForcedDim@@
 \newtoks\EPSFDirectorytoks@
 \newtoks\EPSFNametoks@
 \newtoks\BdBoxtoks@
 \newtoks\LLXtoks@  %% useful info for Oz
 \newtoks\LLYtoks@
 \newif\ifNotIn@
 \newif\ifForcedDim@
 \newif\ifForceOn@
 \newif\ifForcedHeight@
 \newif\ifPSOrigin
 \newread\EPSFile@ 
  \def\ms@g{\immediate\write16}
 \newif\ifIN@\def\IN@{\expandafter\INN@\expandafter}
  \long\def\INN@0#1@#2@{\long\def\NI@##1#1##2##3\ENDNI@
    {\ifx\m@rker##2\IN@false\else\IN@true\fi}%
     \expandafter\NI@#2@@#1\m@rker\ENDNI@}
  \def\m@rker{\m@@rker}
  \newtoks\Initialtoks@  \newtoks\Terminaltoks@
  \def\SPLIT@{\expandafter\SPLITT@\expandafter}
  \def\SPLITT@0#1@#2@{\def\TTILPS@##1#1##2@{%
     \Initialtoks@{##1}\Terminaltoks@{##2}}\expandafter\TTILPS@#2@}
  \newtoks\Trimtoks@
 \def\ForeTrim@{\expandafter\ForeTrim@@\expandafter}
 \def\ForePrim@0 #1@{\Trimtoks@{#1}}
 \def\ForeTrim@@0#1@{\IN@0\m@rker. @\m@rker.#1@%
     \ifIN@\ForePrim@0#1@%
     \else\Trimtoks@\expandafter{#1}\fi}
  \def\Trim@0#1@{%
      \ForeTrim@0#1@%
      \IN@0 @\the\Trimtoks@ @%
        \ifIN@ 
             \SPLIT@0 @\the\Trimtoks@ @\Trimtoks@\Initialtoks@
             \IN@0\the\Terminaltoks@ @ @%
                 \ifIN@
                 \else \Trimtoks@ {FigNameWithSpace}%
                 \fi
        \fi
      }
   \newtoks\pt@ks
   \def\getpt@ks 0.0#1@{\pt@ks{#1}}
  \newtoks\Realtoks% the output!
  \def\Real#1{%
    \dimen2=#1%
      \SPLIT@0\the\pt@ks @\the\dimen2@%%  lop off the points
       \Realtoks=\Initialtoks@%\showthe\Realtoks
            }
   \newdimen\Product
   \def\Mult#1#2{%
     \dimen4=#1\relax
     \dimen6=#2%
     \Real{\dimen4}%
     \Product=\the\Realtoks\dimen6%
        }
 \newdimen\Inverse
 \newdimen\hmxdim@ \hmxdim@=8192pt%halfmaxdimen
 \def\Invert#1{%
  \Inverse=\hmxdim@
  \dimen0=#1%
  \divide\Inverse \dimen0%
  \multiply\Inverse 8}
   \def\Rescale#1#2#3{% Adequate accuracy. Can improve. 
              \divide #1 by 100\relax
              \dimen2=#3\divide\dimen2 by 100 \Invert{\dimen2}% 
              \Mult{#1}{#2}%
              \Mult\Product\Inverse 
              #1=\Product}
  \def\Scale#1{\dimen0=\TheScale %
      \divide #1 by  1280 %% 1280*5120*10=1000*2^16 
      \divide \dimen0 by 5120 % 
      \multiply#1 by \dimen0 
      \divide#1 by 10   %% max size of #1 about 32000/10 pt
     }
 \newbox\scrunchbox
 \def\Scrunched#1{{\setbox\scrunchbox\hbox{#1}%
   \wd\scrunchbox=0pt
   \ht\scrunchbox=0pt
   \dp\scrunchbox=0pt
   \box\scrunchbox}}
 \def\Shifted@#1{%
   \vbox {\kern-\YShift@
       \hbox {\kern\XShift@\hbox{#1}\kern-\XShift@}%
           \kern\YShift@}}
 \def\cBoxedEPSF#1{{\leavevmode 
    %% double brace for amstex \allign, \alligned, ...
   \ReadNameAndScale@{#1}%
   \SetEPSFSpec@
   \ReadEPSFile@ \ReadBdB@x  
   %% Calculations
     \TrimFigDims@ 
     \CalculateFigScale@  
     \ScaleFigDims@
     \SetInkShift@
   \hbox{$\mathsurround=0pt\relax
         \vcenter{\hbox{%
             \FrameSpider{\hskip-.4pt\vrule}%
             \vbox to \Ht@{\offinterlineskip\parindent=\z@%
                \FrameSpider{\vskip-.4pt\hrule}\vfil 
                \hbox to \Wd@{\hfil}%
                \vfil
                \InkShift@{\EPSFSpecial{\EPSFSpec@}{\FigSc@leReal}}%
             \FrameSpider{\hrule\vskip-.4pt}}%
         \FrameSpider{\vrule\hskip-.4pt}}}%
     $\relax}%
    \CleanRegisters@ 
    \ms@g{ *** Box composed for the % 
         EPS file \the\EPSFNametoks@}%
    }}
 \def\tBoxedEPSF#1{\setbox4\hbox{\cBoxedEPSF{#1}}%
     \setbox4\hbox{\raise -\ht4 \hbox{\box4}}%
     \box4
      }
 \def\bBoxedEPSF#1{\setbox4\hbox{\cBoxedEPSF{#1}}%
     \setbox4\hbox{\raise \dp4 \hbox{\box4}}%
     \box4
      }
  \let\BoxedEPSF\cBoxedEPSF% default setting
   \let\BoxedArt\BoxedEPSF
  \def\gLinefigure[#1scaled#2]_#3{%
        \BoxedEPSF{#3 scaled #2}}
  \def\EPSFxsize{\afterassignment\ForceW@\ForcedDim@@}
      \def\ForceW@{\ForcedDim@true\ForcedHeight@false}
  \def\EPSFysize{\afterassignment\ForceH@\ForcedDim@@}
      \def\ForceH@{\ForcedDim@true\ForcedHeight@true}
  \def\EmulateRokicki{%
       \let\epsfbox\bBoxedEPSF \let\epsffile\bBoxedEPSF
       \let\epsfxsize\EPSFxsize \let\epsfysize\EPSFysize} 
 \def\ReadNameAndScale@#1{\IN@0 scaled@#1@% DOUBLE BARRELED
   \ifIN@\ReadNameAndScale@@0#1@%
   \else \ReadNameAndScale@@0#1 scaled\DefaultMilScale @%
   \fi}
 \def\ReadNameAndScale@@0#1scaled#2@{% HELPER MACRO
    \let\OldBackslash@\\%
    \def\\{\OtherB@ckslash}%
    \edef\temp@{#1}%
    \Trim@0\temp@ @%
    \EPSFNametoks@\expandafter{\the\Trimtoks@ }%
    \FigScale=#2 pt%
    \let\\\OldBackslash@
    }
 \def\SetDefaultEPSFScale#1{%
      \global\def\DefaultMilScale{#1}}
 \def \SetBogusBbox@{%
     \global\BdBoxtoks@{ BoundingBox:0 0 100 100 }%
     \global\def\BdBoxLine@{ BoundingBox:0 0 100 100 }%
     \ms@g{ !!! Will use placeholder !!!}%
     }
\gdef\P@S@{%!}\gdef\pct@@{%%}} %% %! min sign of PS file

 \def\ReadEPSFile@{%\show\EPSFSpec@%
     \openin\EPSFile@\EPSFSpec@
     \relax  %necessary to prevent precocious expansion of \ifeof
  \ifeof\EPSFile@
     \ms@g{}%
     \ms@g{ !!! EPS FILE \the\EPSFDirectorytoks@
       \the\EPSFNametoks@\space WAS NOT FOUND !!!}%
     \SetBogusBbox@
  \else%\fi
   \begingroup%%
   \catcode`\%=12\catcode`\:=12\catcode`\!=12
   \catcode"00=14 \catcode"7F=14 \catcode`\\=14 
    %% 14 = comment, terminates input line; 
    %% 5 = CR just picks up extra space
   \global\read\EPSFile@ to \BdBoxLine@ %\show\BdBoxLine@
   \IN@0\P@S@ @\BdBoxLine@ @%
   \ifIN@ %% %! accepted as %!PS so do BdBox search!!
     \NotIn@true
     \loop   
       \ifeof\EPSFile@\NotIn@false 
         \ms@g{}%
         \ms@g{ !!! BoundingBox NOT FOUND IN %
            \the\EPSFDirectorytoks@\the\EPSFNametoks@\space!!! }%
         \SetBogusBbox@
       \else\global\read\EPSFile@ to \BdBoxLine@
       %\show\BdBoxLine@
       \fi
       \global\BdBoxtoks@\expandafter{\BdBoxLine@}%
       \IN@0BoundingBox:@\the\BdBoxtoks@ @%
       \ifIN@\NotIn@false\fi%
     \ifNotIn@
     \repeat
   \else
         \ms@g{}%
         \ms@g{ !!! \the\EPSFNametoks@\space is not PostScript.}%
         \ms@g{ !!! It should begin with the "\P@S@". }%
         \ms@g{ !!! Also, all other header lines until }%
         \ms@g{ !!!  "\pct@@ EndComments"  should begin with "\pct@@". }%
         \SetBogusBbox@
   \fi
  \endgroup\relax
  \fi
  \closein\EPSFile@ 
   }

  %%% \ReadBdB@x
   % Rmk For simplicity 0 not used in syntax 
   %  of \ReadBdB@x@,  \ReadBdB@x@@ 
  \def\ReadBdB@x{% PART 0
   \expandafter\ReadBdB@x@\the\BdBoxtoks@ @}
  
  \def\ReadBdB@x@#1BoundingBox:#2@{% PART 1
    \ForeTrim@0#2@%
    \IN@0atend@\the\Trimtoks@ @%
       \ifIN@\Trimtoks@={0 0 100 100 }%
         \ms@g{}%
         \ms@g{ !!! BoundingBox not found in %
         \the\EPSFDirectorytoks@\the\EPSFNametoks@\space !!!}%
         \ms@g{ !!! It must not be at end of EPSF !!!}%
         \ms@g{ !!! Will use placeholder !!!}%
       \fi%% cf \SetBogusBbox@
    \expandafter\ReadBdB@x@@\the\Trimtoks@ @%
   }
    
  \def\ReadBdB@x@@#1 #2 #3 #4@{% PART 2
      \Wd@=#3bp\advance\Wd@ by -#1bp%
      \Ht@=#4bp\advance\Ht@ by-#2bp%
       \Wd@@=\Wd@ \Ht@@=\Ht@ %% useful info for Clark
       \LLXtoks@={#1}\LLYtoks@={#2}%% useful info for Oz
      \ifPSOrigin\XShift@=-#1bp\YShift@=-#2bp\fi 
     }

  %%% \SetEPSFDirectory 
   %
   \def\G@bbl@#1{}
   \bgroup
     \global\edef\OtherB@ckslash{\expandafter\G@bbl@\string\\}
   \egroup

  \def\SetEPSFDirectory{%  Part 1
           \bgroup\PunctOther@\relax
           \let\\\OtherB@ckslash
           \SetEPSFDirectory@}

 \def\SetEPSFDirectory@#1{% Part 2
    \edef\temp@{#1}%
    \Trim@0\temp@ @%  result in \Trimtoks@
    \global\toks1\expandafter{\the\Trimtoks@ }\relax
    \egroup
    \EPSFDirectorytoks@=\toks1
    }

 \def\SetEPSFSpec@{%
     \bgroup
     \let\\=\OtherB@ckslash
     \global\edef\EPSFSpec@{%
        \the\EPSFDirectorytoks@\the\EPSFNametoks@}%
     \global\edef\EPSFSpec@{\EPSFSpec@}%
     \egroup}

 %%% \TrimFigDims@ 
  % 
 \def\TrimTop#1{\advance\TT@ by #1}
 \def\TrimLeft#1{\advance\LT@ by #1}
 \def\TrimBottom#1{\advance\BT@ by #1}
 \def\TrimRight#1{\advance\RT@ by #1}

 \def\TrimBoundingBox#1{%
   \TrimTop{#1}%
   \TrimLeft{#1}%
   \TrimBottom{#1}%
   \TrimRight{#1}%
       }

 \def\TrimFigDims@{%
    \advance\Wd@ by -\LT@ 
    \advance\Wd@ by -\RT@ \RT@=\z@
    \advance\Ht@ by -\TT@ \TT@=\z@
    \advance\Ht@ by -\BT@ 
    }

 %%% \CalculateFigScale@
  %
  \def\ForceWidth#1{\ForcedDim@true
       \ForcedDim@@#1\ForcedHeight@false}
  
  \def\ForceHeight#1{\ForcedDim@true
       \ForcedDim@@=#1\ForcedHeight@true}

  \def\ForceOn{\ForceOn@true}
  \def\ForceOff{\ForceOn@false\ForcedDim@false}
  
  \def\CalculateFigScale@{%
            %Have default \FigScale or read \FigScale
     \ifForcedDim@\FigScale=1000pt% %% start afresh
           \ifForcedHeight@
                \Rescale\FigScale\ForcedDim@@\Ht@
           \else
                \Rescale\FigScale\ForcedDim@@\Wd@
           \fi
     \fi
     \Real{\FigScale}%
     \edef\FigSc@leReal{\the\Realtoks}%
     }
   
  \def\ScaleFigDims@{\TheScale=\FigScale
      \ifForcedDim@
           \ifForcedHeight@ \Ht@=\ForcedDim@@  \Scale\Wd@
           \else \Wd@=\ForcedDim@@ \Scale\Ht@
           \fi
      \else \Scale\Wd@\Scale\Ht@        
      \fi
      \ifForceOn@\relax\else\global\ForcedDim@false\fi
      \Scale\LT@\Scale\BT@  %%%\Scale\Wd@\Scale\Ht@
      \Scale\XShift@\Scale\YShift@
      }
      
  %%% \ShowDisplacementBoxes
   %%  shows (prints) corrected scaled and positioned
   %%  bounding boxes; for diagnostics
  %%% \HideDisplacementBoxes makes them invisible again
   %%
 \def\HideDisplacementBoxes{\global\def\FrameSpider##1{\null}}
 \def\ShowDisplacementBoxes{\global\def\FrameSpider##1{##1}}
 \let\HideFigureFrames\HideDisplacementBoxes %% some synonyms
 \let\ShowFigureFrames\ShowDisplacementBoxes
 \ShowDisplacementBoxes
 
  %%% \hSlide#1, \vSlide#1
   %%
 \def\hSlide#1{\advance\XSlide@ by #1}
 \def\vSlide#1{\advance\YSlide@ by #1}
 
  %%% \SetInkShift@, \InkShift@#1
   %%
  \def\SetInkShift@{%
            \advance\XShift@ by -\LT@
            \advance\XShift@ by \XSlide@
            \advance\YShift@ by -\BT@
            \advance\YShift@ by -\YSlide@
             }
  \def\InkShift@#1{\Shifted@{\Scrunched{#1}}}
 
  %%% \CleanRegisters@
   %
  \def\CleanRegisters@{%
      \globaldefs=1\relax
        \XShift@=\z@\YShift@=\z@\XSlide@=\z@\YSlide@=\z@
        \TT@=\z@\LT@=\z@\BT@=\z@\RT@=\z@
      \globaldefs=0\relax}

 %%% Special syntax for several drivers. The macros 
  %% \SetTexturesEPSFSpecial  %% Textures 
  %% \SetUnixCoopEPSFSpecial %% dvi2ps early unix 
  %% \SetBechtolsheimDVI2PSEPSFSpecial and 
  %% \SetBechtolsheimDVITPSEPSFSpecial %% by S.P.Bechtolsheim
  %% \SetLisEPSFSpecial %% dvi2ps by Tony Lis
  %% \SetRokickiEPSFSpecial  %% dvips by Tom Rokicki
  %%  --- also for DVIReader, in DirectTeX by W. Ricken
  %% \SetOzTeXEPSFSpecial  %% OzTeX (>=1.42) by Andrew Trevorrow
  %% \SetPSprintEPSFSpecial %% PSprint by Andrew Trevorrow
  %%  --- also for OzTeX versions <= 1.41 !!
  %% \SetArborEPSFSpecial  %% ArborTeX DVILASER/PS
  %% \SetClarkEPSFSpecial %% dvitops by James Clark
  %% \SetDVIPSoneEPSFSpecial %% DVIPSONE of Y&Y 
  %% \SetBeebeEPSFSpecial %% DVIALW by N. Beebe
  %% \SetNorthlakeEPSFSpecial %% Northlake Software
  %% \SetStandardEPSFSpecial %% Nonexistant: Placebo below
  %% Many drivers supported roughly
  %% by (re-)defining the macro \EPSFSpecial#1#2, where
  %% #1 = EPS file pathname (use \\ for the letter backslash)
  %% #2 = scale in mils 
  %% Be wary of using strange characters in pathnames!
 
 %% Textures, Blue Sky Research, Barry Smith
 \def\SetTexturesEPSFSpecial{\PSOriginfalse%\PSOrigintrue
  \gdef\EPSFSpecial##1##2{\relax
    \edef\specialtemp{##2}%
    \SPLIT@0.@\specialtemp.@\relax
    \special{illustration ##1 scaled
                        \the\Initialtoks@}}}
 
  %% Unix : dvi2ps by:  Mark Senn, Stephan  Bechtolsheim,  
   % Bob  Brown, Richard, Furuta, James Schaad, Robert  Wells, 
   % Norm Hutchinson, Neal Holt, Scott Jones, Howard Trickey.
   % Introduced by B. Horn <bkph@ai.mit.edu>
  \def\SetUnixCoopEPSFSpecial{\PSOrigintrue % Please test!
   \gdef\EPSFSpecial##1##2{%
      \dimen4=##2pt% convert real to dimen
      \divide\dimen4 by 1000\relax
      \Real{\dimen4}%dimens 0,2 used here
      \edef\Aux@{\the\Realtoks}%  
      %%convert dimen to real
      \includegraphics{##1\space}}}

  %% dvi2ps and dvitps by S.P. Bechtolsheim,
   % Introduced by B. Horn <bkph@ai.mit.edu> and Carl.M.Jones, 
   % testing by R. Evans <Robert@cm.cardiff.ac.uk>
   % and George Denk <denk@mathematik.tu-muenchen.de>
   % Note that a prolog file psfig.pro
   % specific to the driver should be available.
  \def\SetBechtolsheimEPSFSpecial@{%% tool macro only
   \PSOrigintrue
   \special{\DriverTag@ Include0 "psfig.pro"}%
   \gdef\EPSFSpecial##1##2{%
      \dimen4=##2pt %% convert real to dimen
      \divide\dimen4 by 1000\relax
      \Real{\dimen4} %% dimens 0,2 used here
      \edef\Aux@{\the\Realtoks}%% convert dimen to real
      \special{\DriverTag@ Literal "10 10 0 0 10 10 startTexFig
           \the\mag\space 1000 div 
           dup 3.25 neg mul 1 index .25 neg mul translate %% correction line
           \Aux@\space mul dup scale "}%
      \special{\DriverTag@ Include1 "##1"}%
      \special{\DriverTag@ Literal "endTexFig "}%
        }}

  \def\SetBechtolsheimDVITPSEPSFSpecial{\def\DriverTag@{dvitps: }%
      \SetBechtolsheimEPSFSpecial@}

  \def\SetBechtolsheimDVI2PSEPSFSSpecial{\def\DriverTag@{DVI2PS: }%
      \SetBechtolsheimEPSFSpecial@}

  %% dvi2ps by Tony Lis,
   % implantations? ; dates?; availability?
   % Introduced by B. Horn <bkph@ai.mit.edu>
  \def\SetLisEPSFSpecial{\PSOrigintrue 
   \gdef\EPSFSpecial##1##2{%
      \dimen4=##2pt% convert real to dimen
      \divide\dimen4 by 1000\relax
      \Real{\dimen4}% dimens 0,2 used here
      \edef\Aux@{\the\Realtoks}%  
      %%convert dimen to real
      \special{pstext="10 10 0 0 10 10 startTexFig\space
           \the\mag\space 1000 div \Aux@\space mul 
           \the\mag\space 1000 div \Aux@\space mul scale"}%
      \includegraphics{##1}%
      \special{pstext=endTexFig}%
        }}

  %% dvips by Tom Rokicki; free driver in portable C 
   % Introduced by W.D. Neumann <neumann@mps.ohio-state.edu>
  \def\SetRokickiEPSFSpecial{\PSOrigintrue 
   \gdef\EPSFSpecial##1##2{%
      \dimen4=##2pt% convert real to dimen
      \divide\dimen4 by 10\relax
      \Real{\dimen4}% dimens 0,2 used here
      \edef\Aux@{\the\Realtoks}%  
      %%convert dimen to real
      \includegraphics{##1}}}

  \def\SetInlineRokickiEPSFSpecial{\PSOrigintrue 
   \gdef\EPSFSpecial##1##2{%
      \dimen4=##2pt% convert real to dimen
      \divide\dimen4 by 1000\relax
      \Real{\dimen4}% dimens 0,2 used here
      \edef\Aux@{\the\Realtoks}%  
      %%convert dimen to real
      \special{ps::[begin] 10 10 0 0 10 10 startTexFig\space
           \the\mag\space 1000 div \Aux@\space mul 
           \the\mag\space 1000 div \Aux@\space mul scale}%
      \special{ps: plotfile ##1}%
      \special{ps::[end] endTexFig}%
        }}

 %%%  OzTeX (versions 1.42 and later), by Andrew Trevorrow
 %%%  (for earlier versions see PSprint below!!)
 %%  complete public domain TeX for Macintosh
 %%  Send 10 UNFORMATTED 800K disks 
 %%  with return postage to
 %%  Peter Abbott, Computing Service, 
 %%  Aston University, Aston Triangle, Birmingham B4 7ET
 %%  Posting: ftp   midway.uchicago.edu
 %%  Nota: Version 1.42 may give
 %%  spurious "offpage" error notices on printing.
 %%  Nota: No support yet for MacPaint files.
 \def\SetOzTeXEPSFSpecial{\PSOrigintrue
 \gdef\EPSFSpecial##1##2{%
 \dimen4=##2pt%% convert real to dimen
 \divide\dimen4 by 1000\relax
 \Real{\dimen4}%% dimens 0,2 used here
 \edef\Aux@{\the\Realtoks}%% convert dimen to real
 \special{epsf=\string"##1\string"\space scale=\Aux@}%
 }} 

 %% PSprint, by AndrewTrevorrow for VaX VMS
 %% and OzTeX versions <= 1.41  
  % tested 2-91 by Max Calviani <ISICA@ASTRPD.infn.it>
  \def\SetPSprintEPSFSpecial{\PSOriginFALSE % artifice; see below
   \gdef\EPSFSpecial##1##2{%note order
     \special{##1\space 
       ##2 1000 div \the\mag\space 1000 div mul
       ##2 1000 div \the\mag\space 1000 div mul scale
       \the\LLXtoks@\space neg \the\LLYtoks@\space neg translate
       }}}

 %% DVILASER/PS driver originally written by David Fuchs
  % marketed and supported by ArborTeXt  535 W. William St.
  % Suite 300, Ann Arbor, MI 48103, U.S.A
  % (313) 996-3566 (313) 996-3573
  % help@arbortext.com, Andrew Dobrowolski
 \def\SetArborEPSFSpecial{\PSOriginfalse % check!
   \gdef\EPSFSpecial##1##2{%
     \edef\specialthis{##2}%
     \SPLIT@0.@\specialthis.@\relax % suppress decimals (nec!)
     \special{ps: epsfile ##1\space \the\Initialtoks@}}}

 %% dvitops, (c) James Clark <jjc@jclark.uucp>
  % public domain; distributed by UK TeX Archive
  % computers: unix, msdos, vms, primos and vm/cms,
  % introduced by S. Ratz <spqr@uk.ac.southampton.ecs>
 \def\SetClarkEPSFSpecial{\PSOriginfalse % please test!
   \gdef\EPSFSpecial##1##2{%
     \Rescale {\Wd@@}{##2pt}{1000pt}%
     \Rescale {\Ht@@}{##2pt}{1000pt}%
     \special{dvitops: import 
           ##1\space\the\Wd@@\space\the\Ht@@}}}

 %% DVIPSONE, for PC compatibles
  % Y&Y, 106 Indian Hill, Carlisle MA 01741, USA
  % (508) 371-3286
  % (introduced by B. Horn <bkph@ai.mit.edu>)
  \let\SetDVIPSONEEPSFSpecial\SetUnixCoopEPSFSpecial
  \let\SetDVIPSoneEPSFSpecial\SetUnixCoopEPSFSpecial

 %% DVIALW by N. Beebe, public domain 
  % DVI Driver Distribution, Center for Scientific Computing,
  % Department of Mathematics, 220 South Physics Building,
  % University of Utah, Salt Lake City, UT 84112, USA
  % (introduced by B. Horn <bkph@ai.mit.edu>)
  % Proposed standard; see TUGboat article 1993.
  \def\SetBeebeEPSFSpecial{%please test!
   \PSOriginfalse% 
   \gdef\EPSFSpecial##1##2{\relax
    \special{language "PS",
      literal "##2 1000 div ##2 1000 div scale",
      position = "bottom left",
      include "##1"}}}
  \let\SetDVIALWEPSFSpecial\SetBeebeEPSFSpecial

 %% Northlake software
  \def\SetNorthlakeEPSFSpecial{\PSOrigintrue
   \gdef\EPSFSpecial##1##2{%
     \edef\specialthis{##2}%
     \SPLIT@0.@\specialthis.@\relax % suppress decimals (nec!)
     \special{insert ##1,magnification=\the\Initialtoks@}}}

 \def\SetStandardEPSFSpecial{%
   \gdef\EPSFSpecial##1##2{%
     \ms@g{}
     \ms@g{%
       !!! Sorry! There is still no standard for \string%
       \special\space EPSF integration !!!}%
     \ms@g{%
      --- So you will have to identify your driver using a command}%
     \ms@g{%
      --- of the form \string\Set...EPSFSpecial, in order to get}%
     \ms@g{%
      --- your graphics to print.  See BoxedEPS.doc.}%
     \ms@g{}
     \gdef\EPSFSpecial####1####2{}
     }}

  \SetStandardEPSFSpecial %% currently gives warning
 
 \let\wlog\wlog@ld %%restore logging 

 \catcode`\:=\C@tColon
 \catcode`\;=\C@tSemicolon
 \catcode`\?=\C@tQmark
 \catcode`\!=\C@tEmark
 \catcode`\"=\C@tDqt

 \catcode`\@=\EPSFCatAt

%%%%%%%%%%%% ASCII Character test
 %
 %       Upper case letters: ABCDEFGHIJKLMNOPQRSTUVWXYZ
 %       Lower case letters: abcdefghijklmnopqrstuvwxyz
 %                                   Digits: 0123456789
 % Square, curly, angle braces, parentheses: [] {} <> ()
 %           Backslash, slash, vertical bar: \ / |
 %                              Punctuation: . ? ! , : ;
 %          Underscore, hyphen, equals sign: _ - =
 %                Quotes--right left double: ' ` "
 %"at", "number" "dollar", "percent", "and": @ # $ % &
 %           "hat", "star", "plus", "tilde": ^ * + ~
 %
 %%%%%%%%%%%%%%%%%%%%%%%%
 %
 % Une seule erreur de transmission peut empoisoner un programme!
 %
 % A single transmission error can poison a whole program.
 %
 %%%%%%%%%%%%%%%%%%%%%%%%

 %% windows
\SetEPSFDirectory{} %% windows and unix
\HideDisplacementBoxes
\SetRokickiEPSFSpecial  %% dvips by Tom Rokicki
%
%%%%%%%%%%%%%%%%%%%%%%%%%%%%%%%%%%%%%%%%%%%%%%%%%%%%%%%%%%%

%%%%%%%%%%%%%%%%%%%%%%%%%%%%%%%%%%%%%%%%%%%%%%%%%%%%%%%%%%%
%
% Paul's definitions
%
\newcommand{\bN}{\mathbb{N}} 
\newcommand{\bZ}{\mathbb{Z}} 
\newcommand{\bQ}{\mathbb{Q}} 
\newcommand{\bR}{\mathbb{R}} 
\newcommand{\bC}{\mathbb{C}} 
\newcommand{\bF}{\mathbb{F}} 
\newcommand{\bK}{\mathbb{K}} 
\newcommand{\bL}{\mathbb{L}} 
\newcommand{\bB}{\mathbb{B}} 
\newcommand{\cA}{\mathcal{A}} 
\newcommand{\cB}{\mathcal{B}} 
\newcommand{\cC}{\mathcal{C}} 
\newcommand{\cD}{\mathcal{D}} 
\newcommand{\cF}{\mathcal{F}} 
\newcommand{\cK}{\mathcal{K}} 
\newcommand{\cL}{\mathcal{L}} 
\newcommand{\cM}{\mathcal{M}} 
\newcommand{\cP}{\mathcal{P}} 
\newcommand{\cS}{\mathcal{S}} 
\newcommand{\cU}{\mathcal{U}} 
\newcommand{\cV}{\mathcal{V}} 
\DeclareMathOperator{\Map}{Map} 
\DeclareMathOperator{\Hom}{Hom} 
\DeclareMathOperator{\Ker}{Ker} 
\DeclareMathOperator{\Coker}{Coker} 
\DeclareMathOperator{\Image}{Im} 
\newcommand{\bull}{$~$\\$\bullet \;\;$}
\newcommand{\comment}[1]{{\bf {({#1})}}}
\newcommand{\sidecomment}[1]{\marginlabel{\small{$\blob$ {#1}}}}
\newcommand{\ra}{\rightarrow} 
\newcommand{\ul}[1]{\underline{#1}} 
\newcommand{\dprime}{{\prime\prime}} 
\newcommand{\gap}{\\ [1.5mm]} 
\newcommand{\spaces}{\;\;\;\;\;\;\;} 
\newcommand{\ot}{\otimes} 
\newcommand{\ol}{\widetilde} 
\newcommand{\ob}{\overline} 
\newcommand{\bfx}{{\bf x}}
% 
%
%%%%%%%%%%%%%%%%%%%%%%%%%%%%%%%%%%%%%%%
%%%%%%%%%%%%%%%%%%%%%%%%%%%%%%%%%%%%%%%
\newcommand{\cpr}{\cC\cP_R}
\newcommand{\spf}[1]{\cS_{#1}(P,\cF)}
\newcommand{\hpc}[1]{H_{#1}(P,\cF)}
\newcommand{\cpc}[1]{\cC_{#1}(P,\cF)}
\newcommand{\kbc}[1]{\cK_{#1}(\bB,\cF)}
\newcommand{\cbc}[1]{\cC_{#1}(\bB,\cF)}
\newcommand{\kh}[3]{K\!H_{#1}(#2,#3)}
\newcommand{\rmod}{\cM\text{od}_R}
\newcommand{\grrmod}{Gr\cM\text{od}_R}
\newcommand{\chr}{\text{Ch}_R}
%
%
%%%%%%%%%%%%%%%%%%%%%%%%%%%%%%%%%%%%%%%%%%%%%%%%%%%%%%%%%%%

%%%%%%%%%%%%%%%%%%%%%%%%%%%%%%%%%%%%%%%%%%%%%%%%%%%%%%%%%%%
%
% Brent's definitions
%
\DeclareMathAlphabet{\ams}{U}{msb}{m}{n}
\DeclareMathAlphabet{\goth}{U}{euf}{m}{n}
\def\so{\text{SO}}
\def\pso{\text{PSO}}
\def\po{\text{PO}}
\def\sl{\text{SL}}
\def\psl{\text{PSL}}
\def\pgl{\text{PGL}}
\def\gl{\text{GL}}
\def\ml{\text{ML}}
\def\m{\text{M}}
\def\d{\text{D}}
\def\su{\text{SU}}
\def\sp{\text{Sp}}
\def\f{\text{F}}
\def\pu{\text{PU}}
\def\gal{\text{Gal}}
\def\homeo{\text{Homeo}}
\def\id{\text{id}}
\def\rr{\cal{R}}
\def\rk{\text{rk}}
\def\coker{\text{coker}\,}
\def\vol{\text{vol}\,}
\def\covol{\text{covol}\,}
\def\im{\text{im}\,}
\def\ker{\text{ker}\,}
\def\aut{\text{Aut}}
\def\isom{\text{Isom}\,}
\def\endo{\text{End}}
\def\sym{\text{Sym}}
\def\ov{\overline}
\def\tl{\tilde}
\def\wtl{\widetilde}
\def\wh{\widehat}
\def\supp{\text{supp}\,}
\def\rank{\text{rank}\,}
\def\dom{\text{dom}}
\def\reflec{\text{Reflec}}
\def\codim{\text{codim}\,}
\def\II{\mathscr I}
\def\EE{\mathscr E}
\def\NN{\mathscr N}
\def\BB{\mathscr B}
\def\FF{\mathcal F}
\def\BB{\mathcal B}
\def\AA{\mathcal A}
\def\CC{\mathcal C}
\def\OO{\mathcal O}
\def\JJ{\mathcal J}
\def\HH{\mathcal H}
\def\RR{\mathcal R}
\def\LL{\mathcal L}
\def\PP{\mathcal P}
\def\QQ{\mathcal Q}
\def\TT{\mathcal T}
\def\DD{\mathcal D}
\def\SS{\mathcal S}
\def\KK{\mathcal K}
\def\SSS{\goth{S}}
\def\BBB{\goth{B}}
\def\XXX{\goth{X}}
\def\aa{\alpha}
\def\ww{\omega}
\def\bb{\beta}
\def\ss{\sigma}
\def\vphi{\varphi}
\def\wvphi{\widehat{\varphi}}
\def\ll{\lambda}
\def\ve{\varepsilon}
\def\Om{\Omega}
\def\wh{\widehat}
\def\Z{\ams{Z}}\def\E{\ams{E}}
\def\H{\ams{H}}\def\R{\ams{R}}
\def\C{\ams{C}}\def\Q{\ams{Q}}
\def\F{\ams{F}}\def\K{\ams{K}}
\def\P{\ams{P}}\def\B{\ams{B}}
\def\O{\ams{O}}
\def\G{\ams{G}}
\def\M{\ams{M}}
\def\e{\mathbf{e}}
\def\w{\mathbf{w}}
\def\u{\mathbf{u}}
\def\x{\mathbf{x}}
\def\y{\mathbf{y}}
\def\0{\mathbf{0}}
\def\quo{/\kern -.45em\sim}
%
%\newpsobject{showgrid}{psgrid}{subgriddiv=1,griddots=5,gridlabels=6pt}
\newpsobject{showgrid}{psgrid}
            {subgriddiv=1,griddots=10,gridlabels=6pt,gridcolor=red}
%\newpsobject{showgrid}{psgrid}
%            {gridcolor=red,subgridcolor=red}
%
\def\ds{\displaystyle}
\def\blob{\bullet}
\def\Langle{\langle\kern -2pt\langle}
\def\Rangle{\rangle\kern -1.9pt\rangle}
%
%%%%%%%%%%%%%%%%%%%%%%%%%%%%%%%%%%%%%%%%%%%%%%%%%%%%%%%%%%%

%%%%%%%%%%%%%%%%%%%%%%%%%%%%%%%%%%%%%%%%%%%%%%%%%%%%%%%%%%%
%
% margin labels
%
\newcommand{\marginlabel}[1]
{\mbox{}\marginpar{\raggedleft\hspace{0pt}#1}}
%
%%%%%%%%%%%%%%%%%%%%%%%%%%%%%%%%%%%%%%%%%%%%%%%%%%%%%%%%%%%

%%%%%%%%%%%%%%%%%%%%%%%%%%%%%%%%%%%%%%%%%%%%%%%%%%%%%%%%%%%
%
% page sizes
%
%\addtolength{\textwidth}{2cm}
\setlength{\textwidth}{6in} 
\addtolength{\oddsidemargin}{-1cm}
\addtolength{\evensidemargin}{-1.5cm}
\addtolength{\topmargin}{-1cm}
%\addtolength{\textfloatsep}{-2em}
%\addtolength{\floatsep}{-2em}
%
%%%%%%%%%%%%%%%%%%%%%%%%%%%%%%%%%%%%%%%%%%%%%%%%%%%%%%%%%%%

\title{Homology of coloured posets:  a generalisation of Khovanov's 
cube construction}

\author{Brent Everitt and
Paul Turner
\thanks{The first author was partially supported by the London and
Edinburgh Mathematical Societies, and is grateful to the Institute for
Geometry and its Applications, %Department of Mathematics, 
University of Adelaide, Australia, for their hospitality during an extended
visit.  The second author was partially supported by the Royal Society
and is grateful to the Glenelg Maths Institute for their hospitality.}
}

\institute{{\sc Brent Everitt:}
Department of Mathematics, University of York, York
YO10 5DD, United Kingdom. \email{bje1@york.ac.uk}. %(Brent Everitt), 
\hspace{1em}{\sc Paul Turner:} School of Mathematical and Computer Sciences,
Heriot-Watt University, Edinburgh, EH1 4AS, United Kingdom and D\'epartement de math\'ematiques, 
Universit\'e de Fribourg, CH-1700 Fribourg, Switzerland.
\email{paul@ma.hw.ac.uk}.
}

%\subjclass{Primary: 57M50 Secondary: 20F04}

\titlerunning{}
\authorrunning{Brent Everitt and Paul Turner}

\begin{document}

\maketitle

%%%%%%%%%%%%%%%%%%%%%%%%%%%%%%%%%%%%%%%%%%%%%%%%%%%%%%%%%%%%%%%%%%%%%%%%%

\begin{abstract}
We define a homology theory for a certain class of posets equipped
with a representation. We show that when restricted to Boolean
lattices this homology is isomorphic to the homology of the ``cube''
complex defined by Khovanov.
\end{abstract}

%%%%%%%%%%%%%%%%%%%%%%%%%%%%%%%%%%%%%%%%%%%%%%%%%%%%%%%%%%%%%%%%%%%%%%%%%

\section*{Introduction}

Given a representation of a Boolean lattice one can construct a chain
complex by using Khovanov's ``cube'' construction in his celebrated paper on
the categorification of the Jones polynomial \cite{Khovanov00}. Recall
that a representation of a Boolean lattice assigns to each vertex $x$
a finite dimensional vector space $V_x$ and to each edge $x\leq y$ a
linear map $V_x \ra V_y$. The homology of complexes arising in this
way plays a central role in link homology theories such as Khovanov
homology and Khovanov-Rozansky homology.  Recently, Heegaard-Floer knot
homology has also been interpreted in terms of the homology of a the
complex coming from a Boolean lattice equipped with a particular
representation. Khovanov's construction relies on specific properties of
Boolean lattices and the question that motivates the
current paper is: can one define a homology theory for a more general
class of posets equipped with a representation, which
for Boolean lattices gives the homology arising from Khovanov's cube complex?

Indeed one can: we define a chain complex for an arbitrary poset with
$1$ equipped with a representation. We refer to such posets as
{\em coloured posets} which form the objects of a category and by passing
to homology we get a functor to
graded modules. This functor satisfactorily answers the above question: 
for coloured Boolean lattices the result is isomorphic to  the homology of 
Khovanov's cube complex, an outcome not \emph{a priori\/} obvious.
 
We begin in Section 1 by studying the category of coloured posets,
$\cpr$ over a ring $R$. We
provide a number of examples and several basic constructions. In
Section 2 we define a functor $\cS_*$ from $\cpr$ to chain complexes
over $R$ which generalises the well-known order homology of a poset to
the situation where one has a local system of coefficients. The
resulting homology $H_*(P,\cF)$ is what we refer to as the homology of the 
coloured poset $(P,\cF)$. We show that the chain complex $\cS_*(P,\cF)$ is 
homotopy equivalent to a much smaller complex $\cC_*(P,\cF)$, paralleling 
the situation  in topology where the full simplicial chain complex on a space
is cut down by throwing away degeneracies.

The main technical result is presented in Section 3 where we show that
a coloured poset obtained by gluing two coloured posets together by a morphism
gives rise to a long exact 
sequence in homology (see Theorem \ref{section:les:result100}).
We give a brief tutorial in Section 4 on Khovanov's cube complex,
which in the context of this paper is a chain complex $\KK_*(\B,\FF)$
associated to a coloured Boolean lattice $(\B, \FF)$. We denote its
homology by $H_{*}^\diamond(\B,\FF)$. In Section 5 we present
the main result, namely the agreement of the coloured poset homology
with the Khovanov's cube homology for coloured Boolean lattices. We construct a
chain map $\phi$ from the cube complex  $\KK_*(\B,\FF)$ to $\CC_*(\B,\FF)$,
%giving the,
and our main result, given as Theorem \ref{thm:main} in \S\ref{main:theorem}, 
is

\begin{maintheorem}
Let $(\B,\FF)$ be a coloured Boolean lattice. 
Then $\phi:\KK_*(\B,\FF)\ra\CC_*(\B,\FF)$ 
is a quasi-isomorphism, yielding isomorphisms,
$$
\xymatrix{H_{n}^\diamond(\B,\FF) \ar[r]^-{\cong} & H_{n}(\B,\FF)}.
$$
\end{maintheorem}

\section{Coloured posets}\label{section:colouredposets}

The principal characters in our story, coloured posets, are partially ordered sets
(posets) whose elements are labeled by $R$-modules so
that there is a homomorphism between the labels of comparable elements. More 
concisely, a coloured poset is a 
representation of a poset with maximal element.

We begin by recalling basic poset terminology, for which we will
generally follow
\cite{Stanley97}*{Chapter 3}. A {\em poset} $(P,\leq)$ is a set $P$ together 
with a reflexive, anti-symmetric, transitive binary relation $\leq$,
and a {\em map of posets} $f:(P,\leq)\rightarrow (Q,\leq')$ is a set
map preserving the respective relations, i.e. $f(x)\leq'f(y)$ in $Q$
if $x\leq y$ in $P$.  One writes $x<y$ when $x\leq y$ and
$x\not=y$. If $x<y$ and there is no $z$ with $x<z<y$ then we say that
$y$ \emph{covers\/} $x$, and write $x<_c y$. The covering relation is
illustrated via the {\em Hasse diagram}: the graph with vertices the
elements of $P$, and an edge joining $x$ to $y$ iff $x<_c y$. We will
follow the convention that Hasse diagrams will be presented vertically on the page
with
$y$ drawn above $x$ whenever $x<_c y$.

An ordered {\em multi-sequence} is a sequence $x_1\leq\cdots\leq x_k$, 
of comparable elements. An ordered \emph{sequence} is a
multi-sequence with $x_1<\cdots<x_k$. A sequence is {\em
saturated} when it has the form $x_1<_c\cdots<_c x_k$. This differs
from the standard poset terminology (where a multi-sequence is called
a multi-chain and a sequence a chain) justified by our giving
preference to homological notions, where the term chain is already
taken.  There is the obvious notion of a $0$: an element with 
$x\geq 0$ for all $x\in P$; similarly for a $1$.
A poset $P$ is {\em graded of rank} $r$
if every saturated
sequence, maximal under inclusion of sequences,
has the same length $r$.
There is then
a unique grading or rank function $\rk:P\rightarrow\{0,1,\ldots,r\}$ with $\rk(x)=0$ 
if and only if $x$ is minimal, and 
$\rk(y)=\rk(x)+1$ whenever $x<_c y$.
The rank $1$ elements are called the {\em atoms}.

Sometimes our posets will turn out to be {\em lattices}: posets for which any $x$ and $y$
have a supremum or least upper bound $x\vee y$ (the join of $x$ and $y$)
and an infimum, or greatest lower bound $x\wedge y$ (the meet of $x$ and $y$). 
A lattice is {\em atomic} if every element can be expressed (not necessarily uniquely)
as a join of atoms.

Without
explicitly mentioning it, we will often consider a poset as a
category whose objects are the elements of the poset, and with a unique
morphism $x\ra y$ between any two comparable elements $x\leq y$.
A {\em representation} of a poset is a covariant functor to some category of
modules.

Here is a primordial example: the Boolean lattice $\B=\B(X)$ on the set $X$
is a lattice isomorphic, by a bijective poset mapping,
to the lattice of subsets of $X$ under 
inclusion. We will often suppress the isomorphism and identify the elements
of $\B$ with subsets of $X$.
If $X$ is finite, then
$\B$ is graded with $\rk(S)=|S|$ for $S\subseteq X$,
% of rank $r=|X|$ 
and atomic, with atoms 
the singletons.
If the atoms are given some fixed ordering $a_1,\ldots,a_r$,
then every $x\in\B$ can be expressed \emph{uniquely} as a join,
\begin{equation}\label{joinofatoms}
x=\bigvee a_{i_j}=a_{i_1}\vee a_{i_2}\vee\cdots\vee a_{i_k},
\end{equation}
where $i_1<\cdots<i_k$. 
For $ x= a_{i_1}\vee\cdots\vee a_{i_k}$, one has the covering relation
$x<_c y$ if and only if the unique expression for 
$y$ is $y=(a_{i_1}\vee\cdots\vee
a_{i_j})\vee a_\ell\vee (a_{i_{j+1}}\vee\cdots\vee a_{i_k})$.

With these preliminaries out of the way we now make the principal definition.
Fix a unital commutative ring $R$
and let $\rmod$ be the category of $R$-modules.

\begin{definition} A {\em coloured poset} $(P,\cF)$ consists of 
\begin{itemize}
\item a poset $P$ having a unique maximal element $1_P$, and
\item a covariant functor $\cF \colon P \ra \rmod$.
\end{itemize}
The functor $\cF$ will be referred to as the {\em colouring}.

A {\em morphism } of coloured posets $(P_1,\cF_1) \ra (P_2,\cF_2)$ is a pair  
$(f,\tau)$ where
\begin{itemize}
\item $f\colon P_1 \ra P_2$ is a map of posets, and
\item $\tau$ is a collection $\{\tau_x\}_{x\in P_1}$ where 
$\tau_x\colon \cF_1(x) \ra \cF_2(f(x))$ is an $R$-module homomorphism. 
\end{itemize}
This data satisfies the following two conditions
\begin{enumerate}
\item $f(x) = 1_{P_2}$ if and only if $x=1_{P_1}$, and
\item (naturality) for all $x\leq y$ in $P_1$, the following diagram commutes
\[
\xymatrix{
\cF_1(x) \ar[rr]^{\cF_1(x\leq y)} \ar[d]_{\tau_x} & &\cF_1(y) \ar[d]^{\tau_{y}}\\
\cF_2(f(x)) \ar[rr]^{\cF_2(f(x) \leq f(y))}  & &\cF_2(f(y)).
}
\]
\end{enumerate}

Coloured posets and morphisms between them form a category denoted $\cpr$.
\end{definition}

Thus the colouring
associates to each element of the poset an $R$-module, $\cF(x)$ and if
$x\leq  y$ then there is an associated map $\cF(x \leq y) \colon
\cF(x) \ra \cF(y)$. We will often find it convenient
to write $\cF_x^{y}$ instead of $\cF(x\leq y)$. Also, we usually
 define the morphisms $\FF(x\leq y)$ just in the cases where $x<_c y$,
as all the others can be recovered from these by repeated composition.

Given a map of (uncoloured) posets $f\colon P_1 \ra P_2$, the
composite $\cF_2\circ f\colon P_1 \ra \rmod$ defines another colouring
on $P_1$. Condition (2) in the above definition is merely stating that
$\tau$ is a natural transformation (of functors $P_1\ra \rmod$) from
$\cF_1$ to $\cF_2\circ f$.

The notion of a coloured poset is a rather general one encompassing
many interesting examples as the following illustrate.

\begin{example} 
Let $P$ be a poset with unique maximal element and $A$ an
$R$-module. The {\em constant colouring} on $P$ by $A$ is defined by
the colouring functor $\cF\colon P \ra \rmod$ given by $\cF(x) = A$
and $\cF_x^{y} = \id_A$ for all $x\leq y$.
\end{example}

\begin{example}[Pre-sheaves]\label{ex:sheaf}
Let $X$ be a topological space and $P$ the poset of open subsets 
partially ordered by {\em reverse} inclusion. 
A colouring is equivalent to a pre-sheaf of $R$-modules on $X$.
\end{example}

\begin{example}[abelian subgroups] Let $G$ be a group. 
Then the poset of abelian subgroups is a naturally a
coloured poset, by colouring an element of the poset with the subgroup it
corresponds to. Homomorphisms are just the inclusions.
\end{example}

\begin{example}[The Khovanov colouring]\label{ex:khovanov}
This is a colouring of a Boolean lattice associated to a link diagram.
Let $D$ be a projection of a link i.e. a link diagram,  and let
$\B$ be the Boolean lattice on the crossings of the diagram. Each
crossing can be resolved into a $0$ or a $1$-resolution as shown on
the left in Figure \ref{resolutions}. If $S$ is some subset of crossings,
then the complete resolution $D(S)$ is what results from $1$-resolving
the crossings in $S$ and $0$-resolving the crossings not in $S$: it is
a collection of planar circles.
These complete resolutions are central in Kauffman's formulation of
the Jones polynomial and also in Khovanov's definition of his homology
for links \cites{Bar-Natan02,Khovanov00}.

Now let $V$ be a graded commutative Frobenius algebra over $R$ with
multiplication $m$ and comultiplication $\mu$ both of degree $-1$. We
define a (graded) colouring $\FF\colon \B\rightarrow\grrmod$ as
follows: for $S\in\B$, let $\FF(S)=V^{\otimes k}[\text{rk}(S)]$, with
a tensor factor corresponding to each connected component of $D(S)$
shifted by the rank of $S$ in $\B$. The notation is that for $W_*$ a
graded module $(W_*[a])_i= W_{i-a}$. If $S<_c T$ in $\B$ then $D(T)$
results from $1$-resolving a crossing that was $0$-resolved in $D(S)$,
with the qualitative effect being that two of the circles in $D(S)$
fuse into one in $D(T)$, or one of the circles in $D(S)$ bifurcates
into two in $D(T)$.  In the first case $\FF(S<_c T):V^{\otimes
k}[\text{rk}(S)]\rightarrow V^{\otimes k-1}[\text{rk}(T)]$ is the map
using $m$ on the tensor factors corresponding to the fused circles,
and the identity on the others.  In the second, $\FF(S<_c
T):V^{\otimes k}[\text{rk}(S)]\rightarrow V^{\otimes
k+1}[\text{rk}(T)]$ is the map using $\mu$ on the tensor factor
corresponding to the bifurcating circles, and the identity on the
others. In both cases $\FF(S<_c T)$ is a grading preserving map. The
properties of a Frobenius algebra guarantee that $\FF$ is a
well-defined functor.

It is worth noting that (un-normalised) Khovanov homology is then
defined as the homology of a certain complex obtained from this
``cube''. In \S\ref{section:cubecomplex} we explain Khovanov's construction of this
complex, which we will call the {\em cube complex} of a coloured Boolean
lattice. For a very restrictive class of graded Frobenius algebras this results
in a bi-graded homology theory which after normalisation (depending on
an orientation) gives an invariant of oriented links.
More recent link homology theories, such as Khovanov-Rozansky homology, 
are also defined as the homology of the cube complex of a certain coloured 
Boolean lattice associated to a link diagram.
\end{example}

\begin{figure}
\begin{pspicture}(0,0)(15,3.5)
%\showgrid
\rput(-2,-.25){
\rput(3,2){\BoxedEPSF{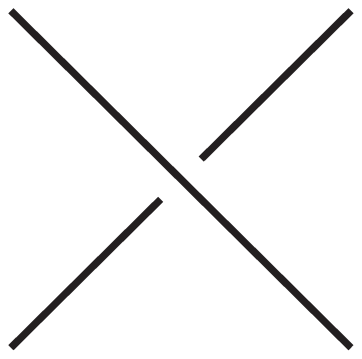 scaled 350}}
\rput(7,2){\BoxedEPSF{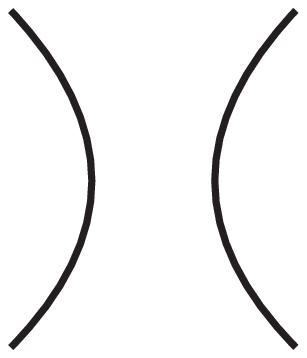 scaled 350}}
\rput(5,2){\BoxedEPSF{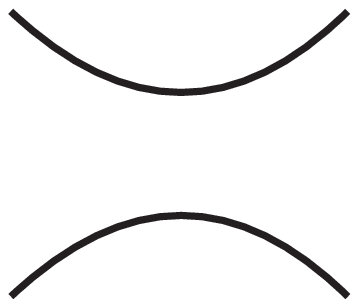 scaled 350}}
\rput(5,3.5){$0$-resolution}\rput(7,3.5){$1$-resolution}
}
\rput(5,.25){
\rput(3,2){\BoxedEPSF{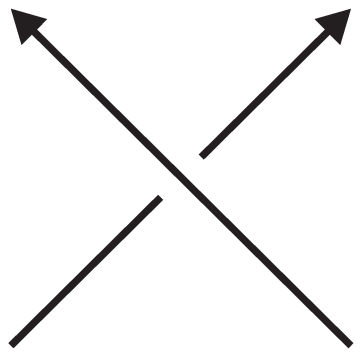 scaled 350}}
\rput(3,.5){\BoxedEPSF{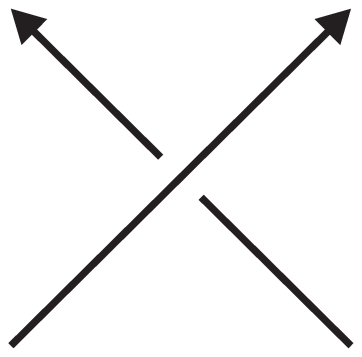 scaled 350}}
\rput(5.5,.5){\BoxedEPSF{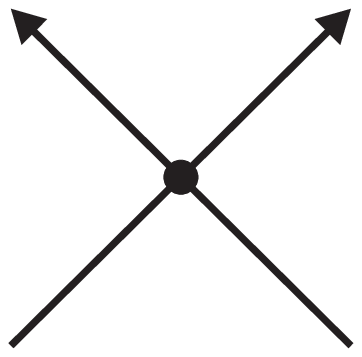 scaled 350}}
\rput(8,.5){\BoxedEPSF{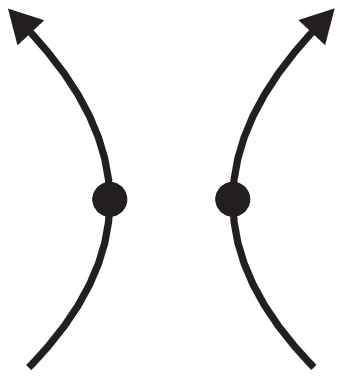 scaled 350}}
\rput(8,2){\BoxedEPSF{singularization.eps scaled 350}}
\rput(5.5,2){\BoxedEPSF{smoothing1.eps scaled 350}}
\rput(5.5,3){$0$-resolution}\rput(8,3){$1$-resolution}
\rput(3,2.55){${\red -}$}\rput(3,-.05){${\red +}$}
}
\end{pspicture}
\caption{$0$- and $1$-resolutions of a crossing in the Khovanov colouring (left)
and Ozsv{\'a}th-Szab{\'o} colouring (right).}\label{resolutions}
\end{figure}

\begin{example}[The Ozsv{\'a}th-Szab{\'o} colouring]\label{ex:os}
Although it has more geometric origins, 
knot Floer homology  now has a completely combinatorial
description involving a coloured Boolean lattice, along the lines of
Khovanov homology.
Let $\B$ be the Boolean 
lattice on the $n$ crossings of a link diagram $D$,
so that as before each crossing can be resolved
into a $0$ or a $1$-resolution, this time as shown on the right
of Figure \ref{resolutions}. 
Call the resolutions along the top row smoothings and
singularizations respectively.
If $S$ is a set of crossings, then the
complete resolution $D(S)$ is the graph resulting from
$1$-resolving the crossings in $S$ and $0$-resolving the crossings not
in $S$. Let $R=\Z[t,s_0,\ldots,s_{2n}]$, 
with the $s_i$ corresponding to the edges of a completely resolved diagram.
For $S\in\B$, let $\FF(S)$ be the quotient $A_S$ of $R$
obtained by introducing certain relations determined by $D(S)$.
If $S<_c T$ then there is a single crossing which is smoothed (resp.
singularized) in $S$ that is singularized (resp. smoothed) in $T$.
The map $\FF(S<_c T):A_S\rightarrow A_T$ is then a certain zip (resp. unzip)
homomorphism.
The precise details, which are a little more elaborate than
in the previous example, can be found in \cite{Ozsvath07}.

It turns out that $\FF$ is a colouring of the Boolean lattice associated
to an oriented knot diagram, and the homology of the associated cube complex is 
isomorphic to the Heegaard-Floer knot homology of the knot.
\end{example}

\begin{example}[The colouring of a Boolean lattice associated to a graph]
Let $\Gamma$ be a graph, $\B$ the Boolean lattice on the edge set, and
$M$ an $R$-algebra with multiplication $m$. If 
$S$ is some set of edges then let the graph $\Gamma(S)$ have the same vertex set as
$\Gamma$ and edge set $S$. Define $\FF:\B\rightarrow\cM\text{od}_R$ as follows:
 if $S\in\B$,
let $\FF(S)=M^{\otimes k}$, with a tensor factor corresponding
to each connected
component of the graph $\Gamma(S)$. If $S<_c T$ in $\B$ then $T=S\cup\{e\}$ for
some edge $e$. In particular, the graph $\Gamma(T)$ either has the same number of 
components as $\Gamma(S)$, or the edge $e$ connects two components, reducing the
overall number by one. Define $\FF_S^T=\id$ in the first case, and in the second,
$\FF_S^T:M^{\otimes k}\rightarrow M^{\otimes k-1}$ is the map using $m$
on the tensor factors corresponding to the components connected by
$e$, and the identity on the others.  This procedure gives a colouring $\FF$ of the 
Boolean lattice of a graph, first defined by Helme-Guizon and Rong
(see \cite{Helme-Guizon05}), and the homology of the associated cube complex
is related to the chromatic polynomial of $\Gamma$.
\end{example}

There are a number of interesting  constructions with coloured posets 
which we now discuss.

\subsubsection*{Unions.}
Let $(P_1,\cF_1)$ and $(P_2,\cF_2)$ be coloured
posets. Their {\em union}, $(P_1, \cF_1) \cup (P_2, \cF_2)$ is defined
by taking the disjoint union of $P_1$ and $P_2$ and then identifying
$1_{P_1}$ with $1_{P_2}$ (so the underlying poset of the union is almost,
but not quite,
the union of the underlying posets). The colouring is defined by $\cF_1$ and $\cF_2$
with the modification that $1$ is coloured by $\cF_1(1_{P_1}) \oplus
\cF_2(1_{P_2})$, and for $x\in P_1$ we have $\FF_x^1=\FF_1\oplus 0$ (and 
similarly $\FF_y^1=0\oplus\FF_2$ for $y\in P_2$).

\subsubsection*{Products.}
The {\em product} 
$(P_1,\cF_1)\times (P_2,\cF_2)=(P,\FF)$, has underlying poset $P$ the 
direct product of the $P_i$, ie: the poset with
elements $(a,b)\in P_1\times P_2$
with $(a,b) \leq (a^\prime,b^\prime)$ iff $a\leq a^\prime$ and
$b\leq b^\prime$. The colouring is $\cF(a,b) = \cF_1(a) \otimes_R
\cF_2(b)$ and $\cF_{(a,b)}^{(a^\prime,b^\prime)}= (\cF_1)_a^{a^\prime}
\otimes (\cF_2)_b^{b^\prime}$.

For example, if $\B_i$, ($i=1,2$)  are Boolean latices of rank $r_i$ (isomorphic to 
the lattice of subsets of $X_i$), then $\B_1\times\B_2$ is Boolean of
rank $r_1+r_2$ (isomorphic to the lattice of subsets of $X_1\amalg X_2$). 
If the $\B_i$ are coloured by $\FF_i$, we have a picture like Figure \ref{products},
in the case $r_1=1,r_2=2$,
and where we have abbreviated $U_x:=\FF_1(x), V_x:=\FF_2(x)$.

\begin{figure}
\begin{pspicture}(0,0)(15,4)
%\showgrid
\rput(0.4,0){
\rput(1,2){\BoxedEPSF{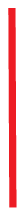 scaled 350}}
%\rput(1,1.45){${\scriptstyle U_0}$}
\rput(1,1.45){$U_0$}
%\rput(1,2.55){${\scriptstyle U_1}$}
\rput(1,2.55){$U_1$}
\rput(1,.8){$(\B_1,\FF_1)$}
}
\rput(.9,0){
\rput(4,2){\BoxedEPSF{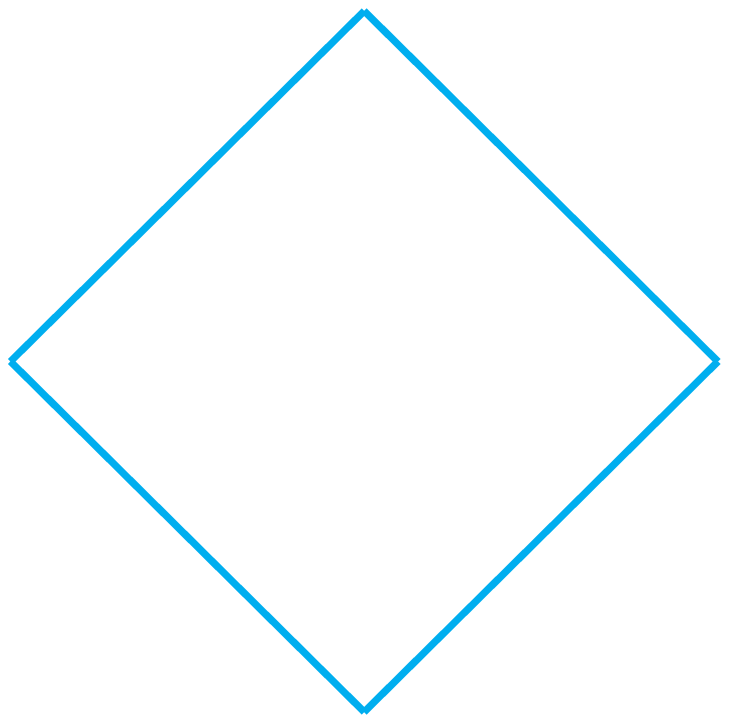 scaled 350}}
%\rput(4,0.8){${\scriptstyle V_0}$}
\rput*(4,0.8){$V_0$}
%\rput(2.8,2){${\scriptstyle V_x}$}
\rput*(2.8,2){$V_x$}
%\rput(5.2,2){${\scriptstyle V_y}$}
\rput*(5.2,2){$V_y$}
%\rput(4,3.2){${\scriptstyle V_1}$}
\rput*(4,3.2){$V_1$}
\rput(4,.3){$(\B_2,\FF_2)$}
}
%\rput(2.4,2){$\times$}
%\psline[linewidth=.3mm]{->}(6.5,2)(8,2)
\rput(2.5,0){
\rput(7.5,2){\BoxedEPSF{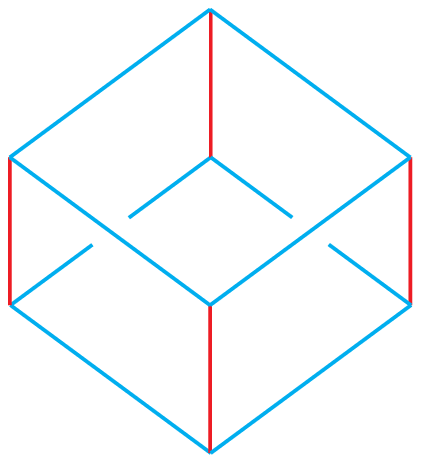 scaled 800}}
%\rput(7.5,0.15){${\scriptstyle U_0\otimes V_0}$}
\rput*(7.5,0.15){$U_0\otimes V_0$}
%\rput(6.25,1.35){${\scriptstyle U_0\otimes V_x}$}
\rput*(5.9,1.35){$U_0\otimes V_x$}
%\rput(7.5,1.35){${\scriptstyle U_1\otimes V_0}$}
\rput*(7.5,1.35){$U_1\otimes V_0$}
%\rput(8.75,1.35){${\scriptstyle U_0\otimes V_y}$}
\rput*(9.15,1.35){$U_0\otimes V_y$}
%\rput(6.25,2.6){${\scriptstyle U_1\otimes V_x}$}
\rput*(5.9,2.6){$U_1\otimes V_x$}
%\rput(7.5,2.6){${\scriptstyle U_0\otimes V_1}$}
\rput*(7.5,2.6){$U_0\otimes V_1$}
%\rput(8.75,2.6){${\scriptstyle U_1\otimes V_y}$}
\rput*(9.15,2.6){$U_1\otimes V_y$}
%\rput(7.5,3.85){${\scriptstyle U_1\otimes V_1}$}
\rput*(7.5,3.85){$U_1\otimes V_1$}
}
\rput(13.6,2){$(\B_1\times\B_2,\FF_1\otimes\FF_2)$}
\end{pspicture}
\caption{The product of coloured
Boolean lattices of ranks $1$ and $2$, yielding a coloured Boolean lattice
of rank $3$.}\label{products}
\end{figure}

\subsubsection*{Gluing along a morphism.}
Let $(P_1,\cF_1)$ and $(P_2,\cF_2)$ be
coloured posets and let $(f,\tau)\colon (P_1,\cF_1)\ra (P_2,\cF_2)$ be a
morphism of coloured posets. We can construct a new coloured poset
$(P_1,\cF_1)\cup_f (P_2,\cF_2)$ by ``gluing'' $P_1$ to $P_2$ using the map
$f$. 

The underlying set of $(P_1,\cF_1)\cup_f (P_2,\cF_2)$ is $P_1 \cup P_2$,  the 
union of elements on $P_1$ and $P_2$. The partial order on this set is defined 
as follows.
\begin{itemize}
\item If $a,a^\prime\in P_i$ then $a\leq a^\prime$ iff  $a\leq a^\prime$ in $P_i$;
\item if $a\in P_1$ and $a^\prime\in P_2$ then $a\leq a^\prime$ iff $f(a) \leq a^\prime$ 
in $P_2$.
\end{itemize}
We will denote this poset by $P_1 \cup_f P_2$.

The colouring functor $\cF\colon P_1 \cup_f P_2 \ra \rmod$ is defined as
follows. 
For an object
$a\in P_i$ set $\cF(a) = \cF_i(a)$. For a morphism $a\leq a^\prime$ we define 
$\cF_a^{a^\prime}\colon \cF(a) \ra \cF(a^\prime)$ as follows.
\begin{itemize}
\item If $a,a^\prime\in P_i$ then $\cF_a^{a^\prime} = (\cF_i)_a^{a^\prime}$, and
\item if $a\in P_1$ and $a'\in P_2$ then as part of the morphism $(f,\tau)$ there is a 
map $\tau_a\colon \cF_1(a) \ra \cF_2(f(a))$. Since $f(a) \leq a^\prime$ in
$P_2$ there is a map $(\cF_2)_{f(a)}^{a^\prime}\colon \cF_2(f(a)) \ra
\cF_2(a^\prime)$. In this case set 
\[
\cF_a^{a^\prime} =
(\cF_2)_{f(a)}^{a^\prime}\circ \tau_a.
\]
\end{itemize}

\begin{lemma}
$(P_1 \cup_f P_2, \cF)$ is a coloured poset.
\end{lemma}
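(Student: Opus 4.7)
The plan is to verify the two defining properties of a coloured poset: (i) that $P_1\cup_f P_2$ is indeed a poset with a unique maximal element, and (ii) that $\cF$ is a well-defined covariant functor from it to $\rmod$.

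First, I check the poset axioms on the set $P_1\cup_f P_2$. Reflexivity is immediate from reflexivity in each $P_i$. For antisymmetry, note that the relation only propagates from $P_1$ to $P_2$ (no ``backwards'' comparisons are declared), so $a\leq b$ and $b\leq a$ forces $a,b$ to lie in the same $P_i$, reducing to antisymmetry there. For transitivity, the only interesting case is when the chain $a\leq b\leq c$ crosses the boundary; if $a,b\in P_1$ and $c\in P_2$, then $f(a)\leq f(b)\leq c$ in $P_2$ using that $f$ preserves order, so $a\leq c$ by definition, and the case $a\in P_1$, $b,c\in P_2$ is handled by transitivity in $P_2$ applied to $f(a)\leq b\leq c$.

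Next I identify the unique maximal element. Since $f(1_{P_1}) = 1_{P_2}$ (forced by condition (1) of a coloured poset morphism), every $x\in P_1$ satisfies $f(x)\leq 1_{P_2}$ and hence $x\leq 1_{P_2}$ in the glued poset; every $y\in P_2$ also satisfies $y\leq 1_{P_2}$. Uniqueness follows because any other maximum $z$ would need $1_{P_2}\leq z$, but such a relation is defined only when $z\in P_2$, forcing $z=1_{P_2}$ by maximality in $P_2$.

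Finally, I verify that $\cF$ is a functor. Preservation of identities is immediate. For composition, consider a composable pair $a\leq a'\leq a''$. The cases where all three elements lie in a single $P_i$ follow from functoriality of $\cF_i$. The case $a\in P_1$, $a',a''\in P_2$ unwinds as
\[
\cF_{a'}^{a''}\circ \cF_a^{a'} \;=\; (\cF_2)_{a'}^{a''}\circ (\cF_2)_{f(a)}^{a'}\circ \tau_a \;=\; (\cF_2)_{f(a)}^{a''}\circ \tau_a \;=\; \cF_a^{a''},
\]
using functoriality of $\cF_2$. The remaining case $a,a'\in P_1$, $a''\in P_2$ is the main obstacle and is precisely where condition (2) in the morphism definition (naturality of $\tau$) is required:
\[
\cF_{a'}^{a''}\circ \cF_a^{a'} \;=\; (\cF_2)_{f(a')}^{a''}\circ \tau_{a'}\circ (\cF_1)_a^{a'} \;=\; (\cF_2)_{f(a')}^{a''}\circ (\cF_2)_{f(a)}^{f(a')}\circ \tau_a \;=\; (\cF_2)_{f(a)}^{a''}\circ \tau_a \;=\; \cF_a^{a''},
\]
where the middle equality invokes the naturality square and the third invokes functoriality of $\cF_2$. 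This establishes that $\cF$ is a covariant functor, completing the verification.
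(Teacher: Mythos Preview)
Your proof is correct and follows the same route as the paper: verify the poset axioms and maximal element directly, then check functoriality of $\cF$ by a case analysis on where the three elements of a composable pair sit, with the crucial case $a,a'\in P_1$, $a''\in P_2$ handled by naturality of $\tau$ together with functoriality of $\cF_2$. The paper presents that key case as a commuting square-with-diagonal and leaves the rest as ``routine'' or ``simpler and omitted''; you have simply filled in those omitted details explicitly.
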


\begin{proof}
It is routine to check that $P_1 \cup_f P_2$ is a poset and moreover 
that $1_{P_2}$ provides a unique maximal element.
To verify that $\cF$ is a functor the only real issue 
is composition. Suppose $a\leq a^\prime \leq a^{\prime\prime}$ then we
must check $\cF_{a^{\prime}}^{a^{\prime\prime}} \circ
\cF_{a}^{a^{\prime}} = \cF_{a}^{a^{\prime\prime}}$. 
There are a number of cases to consider. 
If $a, a^\prime \in P_1$ and $a^{\prime\prime}\in P_2$,
then the identity to check is given by the outside routes around the following diagram.
$$
\xymatrix{
\cF_1(a) \ar[r]^{(\cF_1)_{a}^{a^\prime}} \ar[dd]_{\tau_a} 
& \cF_1(a^{\prime}) \ar[r]^{\tau_{a^\prime}}
& \cF_2(f(a^\prime)) \ar[dd]^{(\cF_2)_{f(a^\prime)}^{a^{\prime\prime}}}\\
& \\
\cF_2(f(a)) \ar[rr]_{(\cF_2)_{f(a)}^{a^{\prime\prime}}} 
\ar[uurr]_{(\cF_2)_{f(a)}^{f(a^{\prime})}} 
& 
& \cF_2(a^{\prime\prime})
}
$$
The righthand triangle commutes courtesy of the functoriality of
$\cF_2$ and the lefthand triangle commutes because of the naturality
of $\tau$, thus the square commutes. 
The other cases are simpler and omitted.
\qed
\end{proof}

\begin{example}\label{example:glueings}
Let $(\bB_i, \cF_i)$ for $i=0,1$, be coloured Boolean lattices of the same rank
(so both isomorphic to the lattice of subsets of $X$) and 
$(f,\tau)\colon (\bB_0, \cF_0)\ra (\bB_1,\cF_1)$ a
morphisms of coloured posets with $f$ an isomorphism.
Then $(\bB_0, \cF_0)\cup_f (\bB_1,\cF_1)$ is
a Boolean lattice, isomorphic to the lattice of subsets of $X\cup f(0_{\B_0})$;
see Figure \ref{glueings} for the rank $|X|=2$ case.

\begin{figure}
\begin{pspicture}(0,0)(15,4)
%\showgrid
\rput(-2.5,0){
\rput(7.5,2){\BoxedEPSF{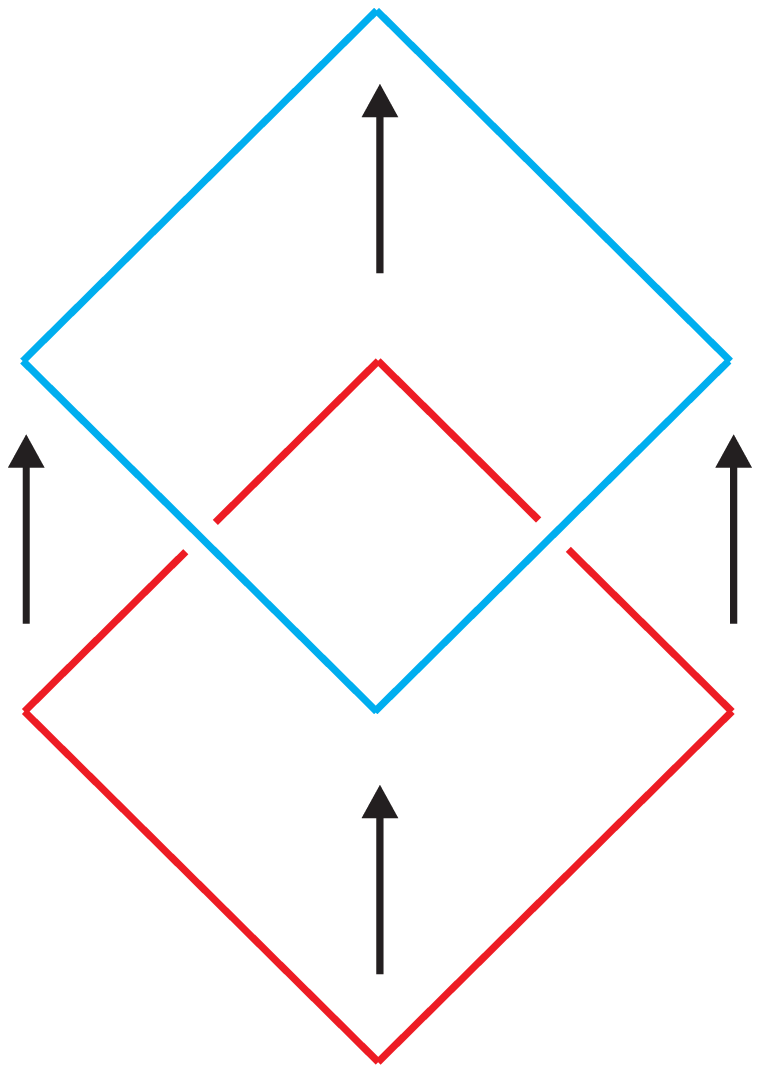 scaled 350}}
\rput*(7.5,0.25){$U_0$}
\rput*(6.25,1.35){$U_x$}
\rput*(7.5,1.35){$V_0$}
\rput*(8.75,1.35){$U_y$}
\rput*(6.25,2.6){$V_x$}
\rput*(7.5,2.6){$U_1$}
\rput*(8.75,2.6){$V_y$}
\rput*(7.5,3.85){$V_1$}
\rput(5,1){$(\B_0,\FF_0)$}
\rput(5,3){$(\B_1,\FF_1)$}
\rput(5,2){\BoxedEPSF{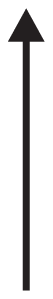 scaled 350}}
\rput(4.8,2){$f$}
\rput(9.3,2){$\left.\begin{array}{c}
\vrule width 0 mm height 36 mm depth 0 pt\end{array}\right\}$}
\rput(10.6,2){$(\B_1\bigcup_f\B_2,\FF)$}
}
%\rput(13.6,2){$(\B_1\times\B_2,\FF_1\otimes\FF_2)$}
\rput(-2.5,0){
\rput(13.5,3){\BoxedEPSF{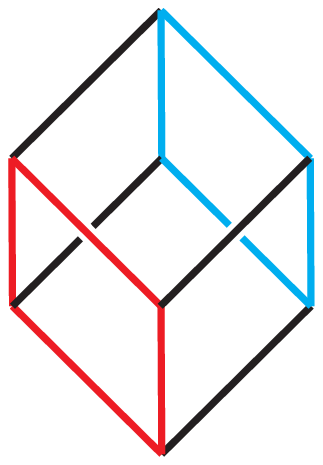 scaled 350}}
\rput(13.5,1){\BoxedEPSF{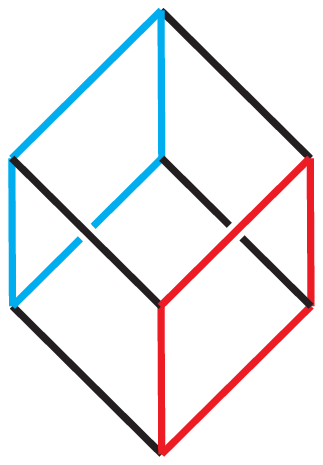 scaled 350}}
%\rput(9.5,.5){$\ell=2$}
\rput(14.5,3){$\ell=3$}\rput(14.5,1){$\ell=1$}}
\end{pspicture}
\caption{Gluing coloured Boolean lattices of rank $2$ along a morphism
to give a coloured Boolean lattice of rank $3$ (left), and
the three decompositions of a rank $3$ lattice as glued
rank $2$ lattices: for $\ell=2$ (left) and $\ell=1,3$ (right).}\label{glueings}
\end{figure}

Turning it around, any coloured Boolean lattice $\B$ of rank $r$ can be decomposed 
$\B=\B_0\bigcup_f\B_1$ in a number of ways, each corresponding
to a pair of opposite faces of the ``cube'': with the atoms ordered
$a_1,\ldots,a_r$, and $a_\ell$ a fixed atom, let $\B_0$ be the subposet
consisting
of $0_\B$ and those $x\in\B$ for which the join (\ref{joinofatoms}) does not contain
$a_\ell$, 
and $\B_1$ those $x$ where it does. Then the $\B_i$ are sub-Boolean
of rank $r-1$. For $x\in\B_0$, define $f(x)=x\vee a_\ell$, and
$\tau_x:=\FF_x^{f(x)}$.
Figure \ref{glueings} illustrates 
%\sidecomment{How about a separate figure to show this i.e. the three cubes on the right
%of the current figure}
the rank three case, with the three
decompositions (clockwise from the main picture) for $\ell=2,3$ and $1$.
The last will play a key role in \S\S\ref{section:les}-\ref{main:theorem}.
\end{example}

\section{The homology of a coloured poset}\label{section:homology}

Poset homology was pioneered by Folkman and Rota, amongst others.
We will make no attempt to summarize this vast area beyond our immediate
needs, but to whet the readers appetite we mention a couple of
fruitful applications: it provides an organizing principle in group 
representation theory, where group actions on posets lead to
representations on the poset homology
\cite{Curtis87}*{\S 66}; in the theory of hyperplane
arrangements it plays a key role, where $P$ is the intersection
lattice of the arrangement, see, eg: \cite{Orlik92}*{\S 4.5}.  The
basic principle is to pass from posets to abstract simplicial
complexes.  Recall that an abstract simplicial complex with vertex set
$X$ is a subset $\Delta\subset 2^X$ such that $\{x\}\in\Delta$ iff
$x\in X$, and
$\ss\in\Delta,\tau\subset\ss\Rightarrow\tau\in\Delta$. The
$k$-simplicies $\Delta_k$ are the $k+1$-element subsets and the empty
set $\varnothing\in\Delta$ is the unique $(-1)$-simplex.  If $P$ is a
poset then the order complex $\Delta(P)$ has $X=P$ and $k$-simplicies
the ordered sequences $\ss=(x_0<x_1<\cdots<x_k)$ of length $k+1$. If
$P$ has a $1$, then $\Delta(P)$ is a cone on
$\Delta(P\setminus 1)$, hence contractible (this is standard, but see
for example \cite{Orlik92}*{Lemma 4.96}). A similar thing is true if
$P$ has a $0$, so this procedure is normally applied to the
order complex on the poset with $0$'s and $1$'s removed: the so-called
Folkman complex.  More details on
poset topology and homology can be found in \cite{Bjorner95}. One can rephrase 
most statements in traditional poset topology in terms of classifying spaces of 
categories if one wishes.  

Our purpose in this section is to  
incorporate a colouring of $P$ into this scheme. 
Essentially this amounts to considering a local coefficient system 
(given by the colouring) on the order complex. This has already made a 
a brief
appearance in the poset literature (see \cite{Orlik92}*{\S 4.6}).
We use the
following notation: an ordered multi-sequence $x_1\leq x_2\leq
\cdots \leq x_n$ will be abbreviated to $\x=x_1x_2\cdots x_n$, and we will write
$1:=1_P$.

If $(P,\cF)$ is a coloured poset we define the chain complex
$\cS_*(P,\cF)$ by setting,
\begin{equation}\label{complex:S}
\cS_k(P,\cF) = \bigoplus_{\substack{x_1x_2\cdots x_k\\ x_i\in P\setminus 1}}
\kern-2mm\cF(x_1),
\end{equation}
for $k>0$.
Thus we have one direct summand for each length $k$ 
multi-sequence $x_1\leq x_2\leq \cdots \leq x_k$ in $P\setminus 1$. 
A typical element can thus be written as 
$\sum_{{\bf x}} \lambda\cdot{\bf x}$,
where the sum is over all length $k$ sequences 
${\bf x}= x_1x_2\cdots x_k$ and $\lambda\in \cF(x_1)$,
and when it is important to remember that 
the sequence $x_1x_2\cdots x_k$ may contain the same element repeated a 
number of times.
For $k=0$ set
\[
\cS_0(P,\cF) = \cF(1),
\]
and for $k<0$, we have $\cS_k(P,\cF) = 0 $.
The differential $d_k\colon \cS_k(P,\cF) \ra \cS_{k-1}(P,\cF)$ is defined for $k>1$ by
\[
d_k(\lambda x_1x_2\cdots x_k)= \cF_{x_1}^{x_2}(\lambda)x_2\cdots x_k 
- \sum_{i=2}^k(-1)^i\lambda x_1 \cdots \widehat{x}_i \cdots x_k,
\]
and $d_1$ is defined by
\[
d_1(\lambda x) = \cF_{x}^{1}(\lambda).
\]

\begin{lemma}
$\cS_*(P,\cF)$ is a chain complex.
\end{lemma}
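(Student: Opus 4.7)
The plan is to verify $d_{k-1} \circ d_k = 0$ for all $k \geq 1$. The case $k = 1$ is trivial since $\cS_{-1}(P,\cF) = 0$, so the first substantive check is $d_1 \circ d_2 = 0$. I would compute this directly: for a generator $\lambda x_1 x_2$,
\[
d_1 d_2(\lambda x_1 x_2) = d_1\bigl(\cF_{x_1}^{x_2}(\lambda)\, x_2\bigr) - d_1(\lambda x_1) = \cF_{x_2}^{1}\!\circ\!\cF_{x_1}^{x_2}(\lambda) - \cF_{x_1}^{1}(\lambda),
\]
which vanishes by functoriality of $\cF$ (that is, $\cF_{x_2}^{1}\circ\cF_{x_1}^{x_2}=\cF_{x_1}^{1}$).

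For $k \geq 3$ I would rewrite the differential in the standard simplicial form. Define face maps $\partial_i \colon \cS_k(P,\cF)\to\cS_{k-1}(P,\cF)$ by
\[
\partial_1(\lambda x_1 x_2 \cdots x_k) = \cF_{x_1}^{x_2}(\lambda)\, x_2\cdots x_k,
\qquad
\partial_i(\lambda x_1\cdots x_k) = \lambda\, x_1\cdots \widehat{x}_i\cdots x_k\ \text{for } i\geq 2.
\]
Then $d_k = \sum_{i=1}^k (-1)^{i+1}\partial_i$, and the task reduces to establishing the usual simplicial identities $\partial_j\circ\partial_i = \partial_{i-1}\circ\partial_j$ for $1\leq j < i \leq k$.

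The verification of these identities splits into cases. When $2 \leq j < i$ both sides equal $\lambda\, x_1\cdots\widehat{x}_j\cdots\widehat{x}_i\cdots x_k$, with the $i-1$ on the right side accounting for the reindexing that occurs once $x_j$ has been removed; no colouring data is touched, so this is purely combinatorial. When $j = 1 < i$ with $i \geq 3$, both $\partial_1\partial_i$ and $\partial_{i-1}\partial_1$ produce $\cF_{x_1}^{x_2}(\lambda)\, x_2\cdots\widehat{x}_i\cdots x_k$, since removing $x_i$ for $i \geq 3$ does not disturb the initial colouring map $\cF_{x_1}^{x_2}$. The critical case is $j=1, i=2$, where
\[
\partial_1\partial_2(\lambda x_1 x_2 x_3 \cdots x_k) = \cF_{x_1}^{x_3}(\lambda)\, x_3\cdots x_k,
\qquad
\partial_1\partial_1(\lambda x_1 x_2 \cdots x_k) = \cF_{x_2}^{x_3}\circ\cF_{x_1}^{x_2}(\lambda)\, x_3\cdots x_k,
\]
and these agree precisely because $\cF$ is a functor on $P$.

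Finally, expand
\[
d_{k-1}\circ d_k \;=\; \sum_{j=1}^{k-1}\sum_{i=1}^k (-1)^{i+j}\,\partial_j\,\partial_i,
\]
split the double sum into the regions $j < i$ and $j \geq i$, apply the simplicial identity to the first region, and reindex via $(i',j')=(j,i-1)$ to match each term in the first region against a term in the second with opposite sign. Total cancellation gives $d_{k-1}\circ d_k = 0$. The only real subtlety is the interface between the generic differential and the augmentation-style $d_1$, but this is isolated in the $k=2$ computation above; for $k \geq 3$ the output of $d_k$ lies in $\cS_{k-1}$ with $k-1\geq 2$, so $d_{k-1}$ acts in its generic form and the standard argument applies verbatim.
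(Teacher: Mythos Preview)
Your proof is correct and follows essentially the same approach as the paper: both arguments show that each doubly-deleted term in $d_{k-1}d_k$ appears exactly twice with opposite signs, with functoriality of $\cF$ handling the case where the first two entries are removed. Your version is simply more explicit, separating out the augmentation case $k=2$ and packaging the generic case in the language of simplicial face maps, whereas the paper compresses everything into a direct sign count.
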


\begin{proof}
We need to show $d^2=0$. It is not hard to see that $d_{k-1}(d_k(\lambda x_1x_2 \cdots x_k))$ is a sum
of terms of the form $\mu x_1 \cdots \widehat{x}_i \cdots
\widehat{x}_j \cdots x_k$, where each indexing multi-sequence
appears exactly twice. We need to
check that such pairs have opposite signs. 
One such term arises by the deletion of $x_i$ and then $x_j$, with its
sign being
$(-(-1)^i) \times
(-(-1)^{j-1}) = (-1)^{i+j-1}$. On the other hand if $x_j$ is deleted
first then the sign is $(-(-1)^j)\times (-(-1)^i) = (-1)^{i+j}$, hence 
the pairs cancel.
\qed
\end{proof}

Given a morphism of coloured posets $(f,\tau)\colon (P_1,\cF_1) \ra (P_2,\cF_2)$ 
there is an induced map 
\[
f_*\colon \cS_*(P_1,\cF_1) \ra \cS_*(P_2,\cF_2)
\]
defined by
\[
\lambda x_1x_2\cdots x_k \mapsto \tau_{x_1}(\lambda)f(x_1)f(x_2) \cdots f(x_k).
\]

\begin{lemma}
$f_*$ is a well-defined chain map.
\end{lemma}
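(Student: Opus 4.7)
The proof is largely a diagram chase, so the plan is to break it into two tasks: well-definedness, then verification of the chain map identity.

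For well-definedness, I need to check that the formula $\lambda x_1 x_2 \cdots x_k \mapsto \tau_{x_1}(\lambda)f(x_1)f(x_2)\cdots f(x_k)$ actually lands in $\cS_*(P_2,\cF_2)$. Two things must hold: first, $\tau_{x_1}(\lambda)$ must lie in $\cF_2(f(x_1))$, which is immediate from the definition of $\tau_{x_1}$; second, the sequence $f(x_1)\leq\cdots\leq f(x_k)$ must consist of elements of $P_2 \setminus 1_{P_2}$. The latter is where condition (1) of the definition of a morphism of coloured posets comes in: since $x_i \neq 1_{P_1}$ for each $i$, we have $f(x_i)\neq 1_{P_2}$. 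For the base degree $k=0$, I take $f_*$ on $\cS_0(P_1,\cF_1) = \cF_1(1_{P_1})$ to be $\tau_{1_{P_1}}$, landing in $\cF_2(1_{P_2}) = \cS_0(P_2,\cF_2)$. The fact that $f$ preserves the order makes all the $f(x_i)$ comparable, so the image is a genuine multi-sequence.

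For the chain map identity $f_*\circ d_k = d_k \circ f_*$ with $k\geq 2$, I compute both sides on a generator $\lambda x_1 x_2 \cdots x_k$. On one side,
\[
f_*(d_k(\lambda\bfx)) = \tau_{x_2}\bigl(\cF_1(x_1\leq x_2)(\lambda)\bigr) f(x_2)\cdots f(x_k) - \sum_{i=2}^k (-1)^i \tau_{x_1}(\lambda) f(x_1)\cdots\widehat{f(x_i)}\cdots f(x_k),
\]
while on the other,
\[
d_k(f_*(\lambda \bfx)) = \cF_2(f(x_1)\leq f(x_2))(\tau_{x_1}(\lambda)) f(x_2)\cdots f(x_k) - \sum_{i=2}^k (-1)^i \tau_{x_1}(\lambda) f(x_1)\cdots\widehat{f(x_i)}\cdots f(x_k).
\]
The alternating deletion sums match term by term, so everything reduces to the identity
\[
\tau_{x_2}\circ \cF_1(x_1\leq x_2) = \cF_2(f(x_1)\leq f(x_2)) \circ \tau_{x_1},
\]
which is precisely the naturality square (condition (2) of a morphism of coloured posets) applied to the pair $x_1 \leq x_2$.

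The case $k=1$ is the same idea applied to the relation $x\leq 1_{P_1}$: I need to check
\[
\tau_{1_{P_1}}\bigl(\cF_1(x\leq 1_{P_1})(\lambda)\bigr) = \cF_2(f(x)\leq 1_{P_2})(\tau_x(\lambda)),
\]
which again follows from the naturality square (using condition (1), i.e.\ $f(1_{P_1}) = 1_{P_2}$, to identify the codomain). There is no real obstacle here; the whole statement is essentially the observation that a morphism of coloured posets is, by definition, exactly the data needed to make $\cS_*$ functorial, and the hypotheses in the definition were chosen with this verification in mind.
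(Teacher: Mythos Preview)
Your proof is correct and follows essentially the same approach as the paper: verify well-definedness via condition~(1) (so each $f(x_i)\neq 1_{P_2}$), then compute both $d\circ f_*$ and $f_*\circ d$ on a generator, observe the alternating deletion sums agree, and reduce the remaining first term to the naturality square of condition~(2). You are slightly more thorough in that you also spell out the degree~$0$ map and the $k=1$ case, which the paper leaves implicit.
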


\begin{proof}
Clearly $\tau_{x_1}(\lambda)f(x_1)f(x_2) \cdots f(x_k)$ is an element of 
$\cS_*(P_2,\cF_2)$ and by the first condition for a morphism of coloured 
posets we also have $f(x_k) \neq 1_{P_2}$.
To see that $f_*$ is a chain map we calculate
\[
d(f_*(\lambda x_1x_2\cdots x_k)) = \cF_{f(x_1)}^{f(x_2)}(\tau_{x_1}(\lambda))
f(x_2)\cdots f(x_k)
+\Phi,
\]
and
\[
f_*(d(\lambda x_1x_2\cdots x_k)) =  \tau_{x_2}(\cF_{x_1}^{x_2}(\lambda))f(x_2)\cdots f(x_k)
+\Phi,
\]
for $\Phi=- \sum_{i=2}^{k}(-1)^i\tau_{x_1}(\lambda)f(x_1)\cdots 
\widehat{f(x_i)}\cdots f(x_k)$.
These are equal by the naturality of $\tau$.
\qed
\end{proof}

We have thus defined a covariant functor
\[
\cS_*\colon \cpr \ra \chr,
\]
from coloured posets to chain complexes over $R$. 
Finally, we define the {\em homology of the coloured poset } $(P,\cF)$ to be
\[
\hpc n = H_n (\spf *).
\]
Since homology is a functor from chain complexes to graded $R$-modules
we therefore have a covariant functor
\[
H_*\colon \cpr \ra \grrmod.
\]

Just as for homology of spaces we can cut down the size of the chain
complex by factoring out redundancies. We define $\cC_*(P,\cF)$
identically to $\cS_*(P,\cF)$, but with the additional requirement
that the $\x=x_1x_2\cdots x_k$ appearing in 
(\ref{complex:S}) are now {\em sequences}
$\x=x_1 <x_2 \cdots < x_k$. More precisely, for $k>0$ let
\[
\cC_k(P,\cF) = \bigoplus_{\substack{x_1x_2\cdots x_k\\x_i<x_{i+1}}}
\kern-2mm\cF(x_1),
\]
with the $x_i\in P\setminus 1$ as before.
Thus we have a direct summand for each length $k$ ordered sequence 
$x_1<x_2<\cdots <x_k$ in $P\setminus 1$. For $k=0$ we set
\[
\cC_0(P,\cF) = \cF(1),
\]
and $\cC_k(P,\cF) = 0$ for $k<0$. 
Clearly $\CC_k\subset\SS_k$, and we define the differential to be the
restriction to $\CC_k$ of the differential on $\SS_k$ (and so 
$\CC_*$ is a subcomplex of $\SS_*$).
Note that if there is a maximal length $r_0$ of an ordered sequence in $P$,
then
we have $\cC_k(P,\cF) = 0$ for $k> r_0$, which is not the
case for $\cS_*$. Nevertheless, it turns out that $\cC_*(P,\cF)$ is homotopy
equivalent to $\cS_*(P,\cF)$ as we will now see.

Let $\cD_k\subset\cS_k$ be the sub-module containing
those summands indexed by sequences with at least one repeat
ie: generated by elements of the form $\lambda x_1x_2\cdots x_k$ where
$x_i=x_{i+1}$ for at least one $i$. If $\lambda x_1\cdots xx\cdots x_k$ is one such,
then there are only two terms in $d(\lambda x_1\cdots xx\cdots x_k)$ without the
repeated $x$'s, and these have opposite signs, hence cancel
(if $\lambda xxx_3\cdots x_k$ is the term, then recall that $\FF_x^x=\id$).
Thus $d$ is closed on $\cD_*$ so $\cD_*\subset\cS_*$ is a subcomplex, and we have proved,

\begin{lemma}
There is a decomposition of complexes
\[
\cS_*(P,\cF) = \cC_*(P,\cF)\oplus \cD_*(P,\cF).
\]
\end{lemma}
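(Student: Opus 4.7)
The plan is to establish the decomposition at the level of $R$-modules first and then verify that both summands are closed under the differential, so that it is a decomposition of chain complexes.

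First, at each degree $k > 0$ the module $\cS_k(P,\cF)$ is a direct sum over all length-$k$ ordered multi-sequences $\x = x_1 \leq x_2 \leq \cdots \leq x_k$ in $P\setminus 1$. These multi-sequences split into two disjoint classes: the \emph{strict} ones ($x_i < x_{i+1}$ for every $i$), which index $\cC_k(P,\cF)$, and those with \emph{at least one repeat} ($x_i = x_{i+1}$ for some $i$), which index $\cD_k(P,\cF)$. This immediately gives $\cS_k(P,\cF) = \cC_k(P,\cF) \oplus \cD_k(P,\cF)$ as $R$-modules, with the $k = 0$ case trivial. It remains to verify that both $\cC_*$ and $\cD_*$ are subcomplexes of $\cS_*$.

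For $\cC_*$, suppose $\x = x_1 < x_2 < \cdots < x_k$ is strict and consider $d(\lambda\x)$. The leading term $\cF_{x_1}^{x_2}(\lambda)x_2\cdots x_k$ is indexed by the strict sequence $x_2 < \cdots < x_k$, and each deletion term $\lambda x_1 \cdots \widehat{x}_i \cdots x_k$ is indexed by a strict sequence because removing one element from a strict sequence yields another strict sequence. Hence $d(\cC_k) \subset \cC_{k-1}$, as already observed in the excerpt.

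For $\cD_*$, the key observation — already sketched in the paragraph preceding the lemma — is the following. Take a generator $\lambda\x$ with $x_i = x_{i+1} = x$ for some $i$, and examine the terms of $d(\lambda\x)$. Every term that deletes an $x_j$ with $j \notin \{i, i+1\}$, as well as the leading contracted term when $i \neq 1$, still carries the repeat $x_i = x_{i+1}$, so it lies in $\cD_{k-1}$. The only terms that could escape $\cD_{k-1}$ are the two that remove one of the repeated copies of $x$. If $i \geq 2$, deleting at position $i$ and deleting at position $i+1$ both produce the sequence $x_1 \cdots x_{i-1} x x_{i+2} \cdots x_k$, with opposite signs $-(-1)^i$ and $-(-1)^{i+1}$, so they cancel. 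If $i = 1$, the leading term $\cF_{x_1}^{x_2}(\lambda) x_2 \cdots x_k$ reduces to $\lambda x_2 \cdots x_k$ because $\cF_{x}^{x} = \id$, and this cancels against the $j = 2$ deletion term $-\lambda x_1 x_3 \cdots x_k = -\lambda x_2 \cdots x_k$. In either case, $d(\lambda\x) \in \cD_{k-1}$, so $\cD_*$ is a subcomplex.

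There is no real obstacle here beyond bookkeeping: the only step requiring care is the sign cancellation for the $\cD_*$ half, and this is a direct verification from the explicit formula for $d_k$. Combining the module decomposition with the closure of $d$ on each summand gives the desired decomposition of complexes $\cS_*(P,\cF) = \cC_*(P,\cF) \oplus \cD_*(P,\cF)$. \qed
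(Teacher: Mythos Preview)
Your proof is correct and follows essentially the same approach as the paper: the module-level splitting is immediate from the disjoint indexing sets, $\cC_*$ is a subcomplex since deleting from a strict sequence keeps it strict, and $\cD_*$ is a subcomplex by the sign-cancellation of the two terms that drop a repeated entry (with the $i=1$ case handled via $\cF_x^x=\id$). You have simply written out the bookkeeping that the paper compresses into the paragraph preceding the lemma.
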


But the complex $\cD_*$ proves not to be interesting, as the following proposition 
shows. The result is similar to standard results is algebraic topology, but it is
simple enough to write down an explicit proof, so we include it for completeness.

\begin{proposition} There is a homotopy equivalence of chain complexes
\[
\cD_*(P,\cF) \simeq 0:=
(\xymatrix{
\cdots \ar[r]^-{0}
& 0\ar[r]^-{0}
& 0\ar[r]^-{0} 
& \cdots})
\]
\end{proposition}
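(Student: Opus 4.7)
The plan is to construct an explicit $R$-linear contracting homotopy $h\colon\cD_k\to\cD_{k+1}$ satisfying $dh+hd = \id_{\cD_*}$; this immediately yields the desired equivalence $\cD_*\simeq 0$.

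For a generator $\lambda\mathbf{x} = \lambda\, x_1 x_2 \cdots x_k$ of $\cD_k$ there is at least one index with $x_i=x_{i+1}$; let $m=m(\mathbf{x})$ be the \emph{largest} such index. Define
\[
h(\lambda\, x_1\cdots x_k) \;=\; (-1)^{m+1}\,\lambda\, x_1\cdots x_{m-1}\, x_m\,x_m\,x_m\, x_{m+2}\cdots x_k,
\]
extended $R$-linearly. Thus $h$ inserts an additional copy of $x_m$ into the existing last-repeat block, producing a length-$(k+1)$ multi-sequence with three consecutive copies of $x_m$ in positions $m,m+1,m+2$; its image obviously lies in $\cD_{k+1}$. (One may extend $h$ by zero on $\cC$ for convenience, though only its restriction to $\cD$ is used.)

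To verify $(dh+hd)(\lambda\mathbf{x}) = \lambda\mathbf{x}$ I would expand both contributions using the differential formula. In $d(h(\lambda\mathbf{x}))$ the three deletions that remove one of the three middle copies of $x_m$ each reproduce the original $\mathbf{x}$, and together with the sign $(-1)^{m+1}$ their net contribution is $+\lambda\mathbf{x}$. All remaining terms of $d(h(\lambda\mathbf{x}))$ (either the leading $\cF$-term or a deletion at position $i\ne m,m+1,m+2$ of the inserted triple) pair off with the terms of $h(d(\lambda\mathbf{x}))$ obtained by first deleting from $\mathbf{x}$ and then applying $h$: the last-repeat position of the shortened sequence shifts from $m$ to $m-1$ if the deletion lies to the left of the triple, and remains at $m$ if it lies to the right, so the two operations produce the same multi-sequence. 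The signs from the differential ($-(-1)^i$) and the homotopy ($(-1)^{m'+1}$) then combine to give exact cancellation. The main obstacle is purely the combinatorial bookkeeping of signs and index shifts. A minor subtlety: a deletion of $x_m$ or $x_{m+1}$ from $\mathbf{x}$ may yield a sequence in $\cC_{k-1}$ with no remaining repeat, but the two such deletions occur with opposite signs in $d(\lambda\mathbf{x})$ and already cancel there, so they contribute nothing to $h(d(\lambda\mathbf{x}))$ regardless of how $h$ is extended to $\cC$—reflecting the fact, noted just before the proposition, that $d$ preserves $\cD$.
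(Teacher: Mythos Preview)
Your argument is correct and follows the same overall strategy as the paper---construct an explicit contracting homotopy by inserting an extra copy of a repeated element---but with a different (and in some respects cleaner) choice of homotopy. The paper works with the \emph{first} repeat: it records the position $p$ of the first repeated element and its multiplicity $n$, and defines $h$ to insert a further copy of $x_p$ when $n$ is even and to be zero when $n$ is odd, then verifies $dh+hd=\id$ by a separate computation in each parity case. You instead use the \emph{last} index $m$ with $x_m=x_{m+1}$ and always insert; this avoids the parity split entirely. The reason your choice sidesteps the case analysis is that deleting to the left of $m$ always shifts the last repeat down by exactly one (even when the last block has odd length, since you track the last \emph{pair} rather than the start of the last block), whereas with the first block one must distinguish whether removing a copy of $x_p$ drops you out of the block or not. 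Your sketch of the cancellation (left deletions shift $m\mapsto m-1$, right deletions keep $m$, the two deletions at $m,m+1$ cancel in $d$) is accurate, and the sign check---$(-1)^{m+1}(-1)^{j+1}$ from $dh$ versus $(-1)^{j+1}(-1)^{(m-1)+1}$ or $(-1)^{(j-1)+1}(-1)^{m+1}$ from $hd$---does give the required opposite signs, though in a final write-up you should spell this out rather than leaving it as ``bookkeeping''.
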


\begin{proof}
It suffices to show that the identity map on $\cD_*=\cD_*(P,\cF)$ is null homotopic. 
For this we need a family of maps $h_i\colon \cD_i \ra \cD_{i+1}$ such that
\begin{equation}\label{eq:hty}
\id = h_{i-1}d_i + d_{i+1}h_i.
\end{equation}
Given a multi-sequence $\x= x_1x_2\cdots x_k$ in $\cD_*$ we define
$p=p({\bf x})=\text{min}\{i\mid x_i=x_{i+1}\}$,
so $p({\bf x})$ is the position of the first repeating element,
and let $n=n({\bf x})$ be the number of times $x_p$ repeats. 
Thus we can write a multi-sequence $\x= x_1x_2\cdots x_k$ as
$ x_1 \cdots x_{p-1}x_p^n x_{p+n} \cdots x_k, $  
where $x_i\neq x_{i+1}$ for $1\leq i < p$ and $n\geq 2$.
Define $h_i\colon \cD_i \ra \cD_{i+1}$ by
\[
h_i(\lambda x_1 \cdots x_{p-1}x_p^n x_{p+n} \cdots x_k) = 
\begin{cases}
(-1)^{p+1 }\lambda x_1 \cdots x_{p-1}x_p^{n+1} x_{p+n} \cdots x_k & n \text{ even}\\
0 & n \text{ odd}
\end{cases}
\]
To show (\ref{eq:hty}) 
we consider separately the two cases $n$ odd and even.

\subsubsection*{The case: $n$ odd.} We have,
\begin{align*}
d_i(\lambda \bfx) = & \cF_{x_1}^{x_2}(\lambda)x_2  \cdots x_{p-1}x_p^n x_{p+n} \cdots x_k
      - \sum_{j=2}^{p-1} (-1)^j \lambda x_1 \cdots \widehat{x}_j \cdots x_{p-1}x_p^n x_{p+n} \cdots x_k\\
    &  - (-1)^p \lambda x_1 \cdots x_{p-1}x_p^{n-1} x_{p+n} \cdots x_k
      - \sum_{j=p+n}^{k} (-1)^j \lambda x_1 \cdots x_{p-1}x_p^n x_{p+n}  \cdots \widehat{x}_j \cdots x_k.
\end{align*}
Note how the $n$ terms indexed by $x_1\ldots x_{p-1}x_p^{n-1}x_{p+n}\ldots x_k$
cancel to give a single term when $n$ is odd.
Applying $h_{i-1}$ then gives zero on all terms except 
$- (-1)^p \lambda x_1 \cdots x_{p-1}x_p^{n-1} x_{p+n} \cdots x_k$, so that,
\begin{align*}
h_{i-1}d_i(\lambda \bfx) & =   h_{i-1}(- (-1)^p \lambda x_1 \cdots x_{p-1}x_p^{n-1} x_{p+n} \cdots x_k)\\
 & = - (-1)^p (-1)^{p+1}\lambda x_1 \cdots x_{p-1}x_p^{n} x_{p+n} \cdots x_k = \lambda \bfx,
\end{align*}
resulting in
$h_{i-1}d_i(\lambda \bfx) + d_{i+1}h_i(\lambda \bfx) 
= \lambda \bfx + d_{i+1}(0) = \lambda \bfx$.

\subsubsection*{The case: $n$ even.} We compute,
\begin{align*}
h_{i-1}d_i(\lambda \bfx)  =  (-1)^p \cF_{x_1}^{x_2}(\lambda) & x_2  \cdots x_{p-1}x_p^{n+1} x_{p+n} \cdots x_k \\
    &  - (-1)^p \sum_{j=2}^{p-1} (-1)^j \lambda x_1 \cdots \widehat{x}_j \cdots x_{p-1}x_p^{n+1} x_{p+n} \cdots x_k\\
    & - 0 
      - (-1)^{p+1}\sum_{j=p+n}^{k} (-1)^j \lambda x_1 \cdots x_{p-1}x_p^{n+1} x_{p+n}  \cdots \widehat{x}_j \cdots x_k.
\end{align*}
We also have
\begin{align*}
d_{i+1}h_{i}(\lambda \bfx) & =  d_{i+1}( (-1)^{p+1}\lambda  x_1 \cdots x_{p-1}x_p^{n+1} x_{p+n} \cdots x_k )\\
 &  = (-1)^{p+1} \cF_{x_1}^{x_2}(\lambda)  x_2  \cdots x_{p-1}x_p^{n+1} x_{p+n} \cdots x_k\\
 &   \;\;\;\;\;\;\;\;\;- (-1)^{p+1} \sum_{j=2}^{p-1} (-1)^j \lambda x_1 \cdots \widehat{x}_j \cdots x_{p-1}x_p^{n+1} x_{p+n} \cdots x_k\\
    &  \;\;\;\;\;\;\;\;\;- (-1)^{p+1}(-1)^p \lambda x_1 \cdots x_{p-1}x_p^{n} x_{p+n} \cdots x_k \\
    &  \;\;\;\;\;\;\;\;\;- (-1)^{p+1}\sum_{j=p+n}^{k} (-1)^{j+1} \lambda x_1 \cdots x_{p-1}x_p^{n+1} x_{p+n}  \cdots \widehat{x}_j \cdots x_k.
\end{align*}
Thus
\[
h_{i-1}d_i(\lambda \bfx) + d_{i+1}h_i(\lambda \bfx) = 
- (-1)^{p+1}(-1)^p \lambda x_1 \cdots x_{p-1}x_p^{n} x_{p+n} \cdots x_k = \lambda \bfx,
\]
as required.
\qed
\end{proof}

\begin{corollary}
There is a homotopy equivalence of chain complexes $\cpc * \simeq \spf *$.
\end{corollary}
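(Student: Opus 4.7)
The corollary is essentially immediate from the preceding lemma and proposition together, so my plan is to assemble these two facts. By the lemma we have a direct-sum decomposition of chain complexes
\[
\cS_*(P,\cF) = \cC_*(P,\cF) \oplus \cD_*(P,\cF),
\]
and the inclusion $\cC_*(P,\cF) \hookrightarrow \cS_*(P,\cF)$ is a chain map (it is literally the inclusion of a direct summand). The plan is to show that this inclusion is a homotopy equivalence.

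The construction of the homotopy inverse is the obvious projection $\pi\colon \cS_*(P,\cF) \to \cC_*(P,\cF)$ coming from the direct sum decomposition, which is also a chain map since $d$ preserves the decomposition. Composing, $\pi \circ \iota = \id_{\cC_*}$ on the nose, so one direction is trivial. For the other direction, I need a chain homotopy $H \colon \cS_* \to \cS_{*+1}$ between $\iota \circ \pi$ and $\id_{\cS_*}$. The difference $\id_{\cS_*} - \iota \circ \pi$ is zero on the $\cC_*$-summand and equals the identity on the $\cD_*$-summand. The proposition supplies chain homotopy maps $h_i\colon \cD_i \to \cD_{i+1}$ with $\id_{\cD_*} = h_{i-1}d_i + d_{i+1}h_i$; extending each $h_i$ by zero on the $\cC_*$-summand to a map $H_i\colon \cS_i \to \cS_{i+1}$ gives exactly the required homotopy, because the differential in $\cS_*$ restricted to $\cD_*$ agrees with the differential on $\cD_*$ used in the proposition.

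There is really no obstacle here beyond bookkeeping: the only thing to verify is that the decomposition in the lemma is a decomposition of chain complexes (not merely of graded modules), which was already noted in the text since $d$ preserves the sub-module $\cD_*$, and correspondingly $\cC_*$ is a subcomplex. Once this is in hand, the homotopy equivalence $\cS_* \simeq \cC_* \oplus 0 = \cC_*$ follows formally from the fact that homotopy equivalence of chain complexes is preserved by taking direct sums with the zero complex.
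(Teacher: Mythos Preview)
Your argument is correct and is exactly the reasoning the paper has in mind: the corollary is stated without proof in the paper, as it follows immediately from the direct-sum decomposition $\cS_* = \cC_* \oplus \cD_*$ of the preceding lemma together with the null-homotopy of $\cD_*$ supplied by the proposition. Your explicit construction of the homotopy inverse $\pi$ and the extension of $h_i$ by zero on $\cC_*$ simply unpacks this.
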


In particular, $H_n(P,\FF)\cong H_n(\CC_*(P,\FF))$, a form more amenable to calculation.

We now briefly elaborate on the connection with traditional
(uncoloured) poset homology and in particular the assertion that we
have a local coefficient system on the order complex. An abstract
simplicial complex may be viewed as a category with objects $\Delta$
and a unique morphism $\ss\rightarrow\tau$ whenever $\tau\subset\ss$.
A system of local coefficients on $\Delta$ is a (covariant) functor
$\Delta\rightarrow\rmod$ (cf \cite{Gelfand99}*{\S 2.4}). One can form
the chain complex $\cB_*(\Delta,\FF)$ with $$
\cB_k=\bigoplus_{\ss\in\Delta_k}\FF(\ss),
$$
the direct sum over the $k$-simplicies. If $\ss=(x_0<\cdots<x_k)$ is one such
and $\ss_j=(x_0<\cdots<\wh{x}_j<\cdots<x_k)$, then the differential is
$$
d(\lambda\ss)=\sum_{j=0}^k (-1)^j\,\FF(\ss\rightarrow\ss_j)(\lambda)\ss_j.
$$

If $\FF$ is a constant system of local coefficients,
$\FF(\ss)=A$ for all $\ss\in\Delta$ and some $A\in\rmod$, and
$\FF(\ss\rightarrow\ss_j)=\id_A$, then this complex is the one appearing
in traditional poset topology: if $\Delta$ is the Folkman complex of $P$ (i.e. the 
order complex of $P\setminus 0,1$) 
then its homology is the {\em order homology of } $P$ with coefficients in $A$.

There is an augmented  version $\wtl{\cB}_*(\Delta,\FF)$ with $\wtl{\cB}_k=\cB_k$ for 
$k\geq 0$, and $\wtl{\cB}_{-1}=\FF(\varnothing)$. The extended differential
$d:\wtl{\cB}_0\rightarrow\wtl{\cB}_{-1}$ is given by the augmentation
$d(\lambda\ss)=\FF(\ss\rightarrow\varnothing)(\lambda)$.

Now if $P$ is a poset with $1$ and $\FF:P\rightarrow\rmod$ a
colouring, then we get a system of local coefficients on the order
complex $\FF_P:\Delta(P)\rightarrow\rmod$ given by
$\FF_P(\ss)=\FF(x_0)$ when $\ss=(x_0<\cdots<x_k)$, and
$\FF_P(\varnothing)=\FF(1)$. If $\ss\rightarrow\ss_j$ is a morphism in
$\Delta(P)$ where $\ss_j=(x_0<\cdots<\wh{x}_j<\cdots<x_k)$, then
$\FF_P(\ss\rightarrow\ss_j)=\FF_{x_0}^{x_1}$ when $j=0$, and is the
identity otherwise. We may restrict $\FF_P$ to a system of local
coefficients on the subcomplex $\Delta(P\setminus 1)$, and in doing so
we get the explicit connection we are looking for,

\begin{proposition}\label{section:homology:result600}
$\CC_*(P,\FF)=\wtl{\cB}_{*-1}(\Delta(P\setminus 1),\FF_P)$.
\end{proposition}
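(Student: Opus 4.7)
The plan is to exhibit an explicit isomorphism of chain complexes, degree by degree, by unravelling both definitions and then matching the differentials under an index shift of one.

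First I would establish the identification of the underlying modules. In degree $k \geq 1$, a basis-indexing set for $\CC_k(P,\FF)$ consists of strictly increasing sequences $x_1 < x_2 < \cdots < x_k$ in $P\setminus 1$, with summand $\FF(x_1)$. Reindexing by $x_0 = x_1, x_1 = x_2, \ldots, x_{k-1} = x_k$ turns such a sequence into a $(k-1)$-simplex $\ss = (x_0 < \cdots < x_{k-1})$ of $\Delta(P\setminus 1)$, with the summand $\FF(x_1)$ becoming $\FF(x_0) = \FF_P(\ss)$, which is exactly a summand of $\cB_{k-1}(\Delta(P\setminus 1),\FF_P)$. In degree $k = 0$, the complex $\CC_0(P,\FF) = \FF(1)$ is matched with $\wtl{\cB}_{-1}(\Delta(P\setminus 1),\FF_P) = \FF_P(\varnothing) = \FF(1)$ by the augmentation convention. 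Thus the modules coincide in every degree.

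Next I would check that differentials agree under this shift. The simplicial differential on $\cB_{k-1}$ applied to $\lambda\ss$ is $\sum_{j=0}^{k-1}(-1)^j\,\FF_P(\ss\to\ss_j)(\lambda)\,\ss_j$, where $\FF_P(\ss\to\ss_j)$ is $\FF_{x_0}^{x_1}$ if $j=0$ and the identity otherwise. Under the substitution $i = j+1$, the $j=0$ term becomes $\FF_{x_1}^{x_2}(\lambda) x_2\cdots x_k$ and the remaining terms become $\sum_{i=2}^{k}(-1)^{i-1}\lambda x_1\cdots\widehat{x}_i\cdots x_k$, which equals $-\sum_{i=2}^{k}(-1)^{i}\lambda x_1\cdots\widehat{x}_i\cdots x_k$. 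This is precisely the formula for $d_k$ on $\CC_k$ given in \S\ref{section:homology}.

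For the remaining boundary map $d_1 \colon \CC_1 \to \CC_0$, I would verify that the augmentation $d\colon \wtl{\cB}_0 \to \wtl{\cB}_{-1}$ of the right-hand side agrees. A $0$-simplex $\ss = (x)$ has $\FF_P(\ss\to\varnothing) = \FF_x^1$ under the natural extension of the local system to the empty simplex coloured by $\FF(1)$, so the augmentation sends $\lambda x \mapsto \FF_x^1(\lambda)$, matching $d_1$ on the left.

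This is essentially a bookkeeping argument; the only real potential pitfall is the sign convention, where the shift $j \mapsto i-1$ introduces a global sign $-1$ that has to be absorbed correctly into the difference between the simplicial sign pattern $(-1)^j$ starting at $j=0$ and the $\CC_*$ sign pattern $-(-1)^i$ starting at $i=2$. Once that is in place the proposition follows immediately.
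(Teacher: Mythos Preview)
Your proposal is correct and is exactly what the paper intends: it states only that ``the proof is just a matter of unraveling the various definitions,'' and your argument carries out precisely that unraveling, with the degree shift, the sign bookkeeping, and the augmentation check all handled correctly.
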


The proof is just of matter of unraveling the various definitions.
The advantage of this formulation is that to a certain extent it
allows us to appeal to the existing theory of lattice homology. For
example, if $P$ is a poset with $0$, then the order complex
$\Delta(P)$ is a cone on $\Delta(P\setminus 0)$, and so the (reduced)
order homology of an uncoloured poset with $0$ is trivial. Indeed, the
same happens in the coloured case when the colouring is constant:

\begin{example}\label{ex:homologytrivialcolouring}
Let $P$ be a poset with minimal element $0_P$ 
and $\FF$ the constant 
colouring by the $R$-module $A$. Then
$\cC_*(P,\cF)$ is acyclic i.e. $H_n(P,\cF) = 0$ for all $n$. 
This follows immediately from the above and Proposition \ref{section:homology:result600} 
together with a little care in degrees zero and one.
\end{example}

\begin{example}
If $(P_1,\cF_1)$ and $(P_2,\cF_2)$ are coloured posets then
there is a decomposition of complexes 
$$
\cS_*(P_1\cup P_2,
\cF_1\cup \cF_2) \cong \cS_*(P_1,\cF_1) \oplus \cS_*(P_2,\cF_2),
$$
inducing an isomorphism,
$$
\xymatrix{
H_*((P_1, \cF_1) \cup (P_2, \cF_2)) \ar[r]^-{\cong} 
& H_*(P_1, \cF_1) \oplus H_*(P_2, \cF_2).
}
$$
The essential point here is that elements of $P_1$ and elements of
$P_2$ are incomparable in $P_1 \cup P_2$ so an ordered sequence ${\bf
x}$ in $(P_1\cup P_2) \setminus 1$ is either completely in $P_1\setminus
1$ or completely in $P_2\setminus 1$. Moreover the differential respects this
splitting.
\end{example}

\subsubsection*{Cohomology.}
By defining 
\[
\cS^*(P,\cF) = \Hom_R(\cS_*(P,\cF), R)
\]
one can define {\em cohomology} $H^*(P,\cF)$ as the homology of the resulting 
cochain complex. If $R$ is a field then the universal coefficient 
theorem gives an isomorphism $H^*(P,\cF)\cong H_*(P,\cF)$.

\begin{example} Let $P$ be graded of rank $n$ with both a 0 and a 1.
Let 
$P^{\,\text{op}}$ be the opposite poset
defined by $x\leq y$ in $P^{\,\text{op}}$ if and only if $y\leq x$ in
$P$.
If we consider $P$ as a category, $P^{\,\text{op}}$ is simply the
opposite category. Since $P$ has a 0,  
$P^{\,\text{op}}$ has a 1. If we have a colouring
functor $\cF\colon P \ra
\rmod$ then by composing this with the functor $(-)^\vee\colon \rmod
\ra \rmod$ taking a module $A$ to its dual $\Hom_R(A,R)$, 
we get a contravariant functor $P \ra \rmod$. Equivalently, we have a
covariant functor $P^{\,\text{op}} \ra \rmod$, or in other words a colouring
$\cF^{\vee}$ of $P^{\,\text{op}}$. Explicitly, $\cF^{\vee}(x) =
\cF(x)^{\vee} = \Hom_R(\cF(x), R)$ and for $g\in \Hom_R(\cF(x), R)$
we have $\cF^{\vee}(x<y)(g) = g \circ \cF(y<x)$.

In this situation we have the following duality result
$$
H_k(P^{\,\text{op}},\cF^{\vee}) \cong H^{n-k}(P,\cF),
$$
seen by observing that
\begin{align*}
\cS^{n-k}(P, \cF) = \Hom_R( & \cS_{n-k}(P,\cF), R)=   
\Hom_R\biggl(\bigoplus_{\bf x}\cF(x_1), R\biggr)\\
& = 
\bigoplus_{\bf x} \Hom_R(\cF(x_1), R)= 
 \bigoplus_{\bf x} \cF(x_1)^{\vee} = \cS_k(P^{\,\text{op}}, \cF^{\vee}).
\end{align*}
\end{example}

\section{A long exact sequence for the poset obtained by gluing along a morphism}
\label{section:les}

In this section we show that a map $(f,\tau):(P_1,\FF_1)\ra (P_2,\FF_2)$
of coloured posets yields a long exact sequence in homology for the
coloured poset $P_1\bigcup_f P_2$ obtained
by gluing along the morphism.
The main spin-off occurs when 
we focus on Boolean lattices, where 
the decomposition of Example \ref{example:glueings} yields a long exact 
sequence in the homology of the three ingredients. This is the main technical 
tool needed to show
that for a Boolean lattice, the coloured poset homology defined in the last 
section
agrees with Khovanov's cube homology which we discuss in Section 4.

Given coloured posets $(P_1,\FF_1)$ and $(P_2,\FF_2)$ and a morphism
$(f,\tau):(P_1,\FF_1)\ra (P_2,\FF_2)$ we can form  the three complexes 
$\CC_*(P_i,\FF_i)$ for
$i=0,1$ and $\CC_*(P_1\bigcup_f P_2,\FF)$. It is clear that $\CC_*(P_2,\FF_2)$ 
is a sub-module of 
$\CC_*(P_1\bigcup_f P_2,\FF)$, but also, one easily checks that 
$d(\CC_*(P_2,\FF_2))\subset\CC_*(P_2,\FF_2)$, and so there is 
a short exact sequence of complexes
$$
\xymatrix{
0 \ar[r]
&  \cC_{*}(P_2,\FF_2) \ar[r]^-{i}
& \CC_*(P_1\bigcup_f P_2,\FF)\ar[r]^-{q} 
&  Q_*  \ar[r]   & 0},
$$
where by definition, $Q_*$ is the quotient. This yields a long exact sequence in 
homology,
\begin{equation}\label{eq:lesQ}
\xymatrix{
\cdots \ar[r]^-{\delta}
& H_n(P_2,\FF_2) \ar[r]^-{i_*}
& H_n (P_1\bigcup_f P_2,\FF) \ar[r]^-{q_*}
& H_{n}(Q_*) \ar[r]^-{\delta}
& H_{n-1}(P_2,\FF_2) \ar[r]^-{i_*}  & \cdots}
\end{equation}

The $n$-chain module $Q_n$ of the quotient complex is isomorphic to
$$
\bigoplus_{\x}\FF(x_1),
$$
the direct sum over those sequences $\x$ in $P$ not entirely contained in $P_2$,
i.e. $\x=x_1\ldots x_n$ with $x_i\in P_1$ or $\x=x_1\ldots x_jy_1\ldots y_{n-j}$
where $0<j<n$ and the $x_i\in P_1$,  $y_i\in P_2\setminus 1$. We will
write $\x=x_1\ldots x_jy_1\ldots y_{n-j}$ for the generic sequence,, with the
understanding that $\x=x_1\ldots x_n$ when $j=n$.
The differential is given by $d(\lambda\x)=\aa_j+\bb_j$, where $\aa_1=0$ and
$$
\aa_j= \FF_{x_1}^{x_2}(\lambda)x_2\ldots x_jy_1\ldots y_{n-j}
+\sum_{k=2}^j (-1)^{k-1}\lambda x_1\ldots\wh{x}_k\ldots x_jy_1\ldots y_{n-j},
$$
for $j>1$, and $\bb_n=0$, and 
$$
\bb_j=\sum_{k=1}^{n-j}(-1)^{j+k-1}
\lambda x_1\ldots x_jy_1\ldots\widehat{y}_k\ldots y_{n-j},
$$
for $j<n$.

We now define  $\pi_n:Q_n \ra \CC_{n-1}(P_1,\FF_1)$
by
\begin{equation}\label{eq:pi}
\pi(\lambda\,x_1\ldots x_j y_1\ldots y_{n-j})
=\left\{\begin{array}{l}
\lambda\,x_1\ldots x_{n-1},\text{ if }j=n\text{ and }x_n=1_{P_1},\\
0,\text{ otherwise.}
\end{array}\right.
\end{equation}

It is routine to check that,
  
\begin{lemma}
$\pi:Q_* \ra \CC_{*-1}(P_1,\FF_1)$ is a chain map.
\end{lemma}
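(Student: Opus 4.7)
The plan is to verify the chain-map property $\pi\circ d = d\circ \pi$ by a direct case analysis on the shape of a generating sequence $\x = x_1\ldots x_jy_1\ldots y_{n-j}$ in $Q_n$. The computation is mostly mechanical once one has pinned down exactly where the element $1_{P_1}$ can appear in a strict sequence of the glued poset, so the first step is to establish the following structural observation: \emph{in any strict sequence in $(P_1\cup_f P_2)\setminus 1_{P_2}$, the element $1_{P_1}$ can only occur as the terminal entry of a pure-$P_1$ sequence (i.e.\ as $x_n$ when $j=n$).} Indeed, if $x_k = 1_{P_1}$ then any successor $z>x_k$ lies either in $P_1$ (impossible, $1_{P_1}$ being maximal there) or in $P_2$ with $f(1_{P_1})=1_{P_2}\le z$, forcing $z=1_{P_2}$, which is excluded. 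This observation is what drives both that $\pi$ is well defined (outputs live in $\CC_{n-1}(P_1,\FF_1)$, whose sequences stay in $P_1\setminus 1_{P_1}$) and that almost every term in $\pi\circ d$ vanishes.

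Next I would split the verification into two cases. In the first case, $j=n$. If $x_n = 1_{P_1}$, applying $\pi$ termwise to $d(\lambda\x)$ keeps every summand that still ends in $x_n$ (the leading $\FF$-summand and those indexed by $i<n$), converting it by dropping the final $x_n$; and kills the unique $i=n$ summand $-(-1)^n\lambda x_1\ldots x_{n-1}$, because by strictness $x_{n-1}<1_{P_1}$ forces $x_{n-1}\ne 1_{P_1}$. What remains is visibly $d(\lambda x_1\ldots x_{n-1})$ computed in $\CC_*(P_1,\FF_1)$, using that the structural maps $\FF_{x_i}^{x_{i+1}}$ and $(\FF_1)_{x_i}^{x_{i+1}}$ agree on $P_1$, and the boundary case $n=2$ reduces by the same identity to $(\FF_1)_{x_1}^{1_{P_1}}$. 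If instead $x_n\ne 1_{P_1}$, then $\pi(\lambda\x)=0$, and every term of $d(\lambda\x)$ either still ends in $x_n$ or is the $i=n$ summand ending in $x_{n-1}$; the structural observation (applied in $P_1$) rules out $x_{n-1}=1_{P_1}$, so $\pi$ annihilates everything.

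In the second case, $j<n$, so $\pi(\lambda\x)=0$ and one must check $\pi(d(\lambda\x))=0$. Every summand of $\aa_j$ retains all of $y_1,\ldots,y_{n-j}$ at the tail and hence is killed by $\pi$; every summand of $\bb_j$ with $k<n-j$ still ends in $y_{n-j}$ and is likewise killed. The only term demanding attention is the borderline $j=n-1,\,k=n-j=1$ summand, which produces $\lambda x_1\ldots x_{j}$, a pure-$P_1$ sequence of length $j$. By the structural observation $x_j\ne 1_{P_1}$ since $j<n$, so $\pi$ also kills this term. The degenerate low-degree checks ($n=1$, and the $j=n=1$ subcase) are immediate because $Q_0=0$: the chain module $\CC_0(P_1\cup_f P_2,\FF)$ equals $\FF(1_{P_2})=\FF_2(1_{P_2})=\CC_0(P_2,\FF_2)$.

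The only real subtlety (as opposed to bookkeeping) is the structural observation about where $1_{P_1}$ may sit in a strict sequence of the glued poset; once that is isolated, the four sub-cases fall into place in a few lines each. I expect no genuine obstacle, merely the need to keep the indexing of $\aa_j$ and $\bb_j$, and the identification of $\FF$ with $\FF_1$ on $P_1$, straight.
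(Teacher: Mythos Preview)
Your proof is correct and is precisely the routine verification the paper leaves to the reader (the paper simply states ``It is routine to check'' without further detail). Your structural observation about where $1_{P_1}$ can sit in a strict sequence is exactly the point that makes the case analysis go through, and your handling of the boundary cases ($j=n$, $j=n-1$, and the low degrees) is accurate.
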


In fact while $Q_*$ is  \emph{a priori\/} a great deal bigger than
$\CC_*(P_1,\FF_1)$, it turns out to contain a number of acyclic subcomplexes,
allowing us to establish an isomorphism 
$H_{n}(Q_*)\cong H_{n-1}(P_1,\FF_1)$.

\begin{proposition}\label{prop:pi}
The induced map $\pi_*:H_{n}(Q_*)\rightarrow H_{n-1}(P_1,\FF_1)$ is an isomorphism.
\end{proposition}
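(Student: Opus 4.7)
The plan is to reduce the proposition to showing that $\ker\pi$ is acyclic. Since $\pi$ is visibly surjective on chains --- with set-theoretic section $\lambda x_1\cdots x_{n-1}\mapsto \lambda x_1\cdots x_{n-1}\,1_{P_1}$ --- there is a short exact sequence of chain complexes
\[
0\longrightarrow \ker\pi \longrightarrow Q_* \xrightarrow{\ \pi\ } \CC_{*-1}(P_1,\FF_1)\longrightarrow 0,
\]
and acyclicity of $\ker\pi$ will immediately force $\pi_*$ to be an isomorphism via the associated long exact sequence in homology.

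To prove this acyclicity, I would introduce the bounded increasing filtration $F^\bullet Q_*$ in which $F^k Q_n$ is the submodule spanned by those sequences $\x = x_1\cdots x_j y_1\cdots y_{n-j}$ with $j\le k$. The formulae for $\aa_j$ and $\bb_j$ stated just before the proposition show that $\aa_j$ strictly decreases the $P_1$-count by one while $\bb_j$ preserves it, so $d(F^k)\subseteq F^k$, and $F^{\bullet}$ restricts to a filtration of $\ker\pi$ by subcomplexes. On the associated graded $\mathrm{gr}^k := F^k/F^{k-1}$ only the $\bb$-part of $d$ survives, and $\mathrm{gr}^k$ decomposes as a direct sum, over strict sequences $x_1<\cdots<x_k$ in $P_1$, of subcomplexes. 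The summand corresponding to a given sequence is, up to a degree shift and an overall sign, the augmented simplicial chain complex with constant coefficient $\FF_1(x_1)$ of the order complex of
\[
P_2^{>x_k}\;:=\;\{y\in P_2 : y\geq f(x_k),\ y\neq 1_{P_2}\},
\]
with the chain $\lambda x_1\cdots x_k$ having no $y$'s playing the role of the augmentation.

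Two cases then arise. If $x_k\neq 1_{P_1}$, condition (1) for the morphism $(f,\tau)$ gives $f(x_k)\neq 1_{P_2}$, so $f(x_k)$ is a minimum element of $P_2^{>x_k}$; the order complex is then a cone on this minimum and the augmented chain complex (with constant coefficients) is consequently acyclic. If instead $x_k = 1_{P_1}$ then $P_2^{>x_k} = \varnothing$ and the summand collapses to the single chain $\lambda x_1\cdots x_{k-1}\,1_{P_1}$ in degree $k$; this chain lies in the image of the section above and is therefore \emph{not} in $\ker\pi$, contributing nothing to $\mathrm{gr}^k(\ker\pi)$. Thus $\mathrm{gr}^k(\ker\pi)$ is acyclic for every $k$, and a straightforward induction on $k$ via the long exact sequences of $0\to F^{k-1}(\ker\pi)\to F^k(\ker\pi)\to \mathrm{gr}^k(\ker\pi)\to 0$ yields $H_*(F^k(\ker\pi)) = 0$; since $F^n(\ker\pi)_n = (\ker\pi)_n$, we conclude $H_n(\ker\pi) = 0$.

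The main obstacle I expect is the careful identification of each graded summand with the augmented simplicial chain complex of $P_2^{>x_k}$, namely verifying that the signs in $\bb_j$ match the standard simplicial boundary and that the ``no $y$'s'' chain correctly supplies the augmentation. Once that is in place, the two contractibility arguments (cone on a minimum element versus empty poset) and the inductive homological algebra are entirely standard.
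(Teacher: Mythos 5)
Your proposal is correct and follows essentially the same route as the paper: the paper splits $Q_*=A_*\oplus D_*$ with $D_*=\ker\pi$, filters $D_*$ by the length $j$ of the $P_1$-prefix, and identifies the summands of the associated graded (the complexes $A^{\x}_*$ of Lemma \ref{section:les:result200}) with coloured-poset complexes of posets having a minimum $f(x_p)$, hence acyclic. Your only deviations are cosmetic: you run an induction on the filtration rather than quoting the spectral sequence, and you absorb the paper's separate $q=0$ surjectivity check into the augmentation term of the order complex.
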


Delaying the proof of this momentarily, we now combine this proposition with the
long exact sequence (\ref{eq:lesQ}) to obtain the  main result of this section.

\begin{theorem}\label{section:les:result100}
Let $(P_i,\FF_i)$, $i=1,2$, be coloured posets and 
$(f,\tau):(P_1,\FF_1)\ra(P_2,\FF_2)$ a morphism of coloured posets. Then there
is a long exact sequence,
$$
\xymatrix{
\cdots \ar[r]
& H_n(P_2,\FF_2) \ar[r]^-{i_*}
& H_n (P_1\bigcup_f P_2,\FF) \ar[r]^-{(\pi q)_*}  
& H_{n-1}(P_1,\FF_1) \ar[r]^-{\delta\pi_*^{-1}}
& H_{n-1}(P_2,\FF_2) \ar[r]  & \cdots}
$$
where $(P_1\bigcup_f P_2,\FF)$ is the coloured poset obtained
by gluing along the morphism.
\end{theorem}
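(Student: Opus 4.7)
The plan is to derive this theorem as a formal corollary of the long exact sequence (\ref{eq:lesQ}) combined with the isomorphism $\pi_{*}:H_{n}(Q_{*})\xrightarrow{\cong} H_{n-1}(P_{1},\FF_{1})$ supplied by Proposition \ref{prop:pi}. The short exact sequence of chain complexes
$$
0 \to \CC_{*}(P_{2},\FF_{2}) \xrightarrow{\,i\,} \CC_{*}(P_{1}\bigcup_{f}P_{2},\FF) \xrightarrow{\,q\,} Q_{*} \to 0
$$
is legitimate: $\CC_{*}(P_{2},\FF_{2})$ sits inside $\CC_{*}(P_{1}\bigcup_{f}P_{2},\FF)$ as a subcomplex because, for a sequence $y_{1}<\cdots<y_{k}$ in $P_{2}\setminus 1$, every term of its differential again lies in $\CC_{*}(P_{2},\FF_{2})$ (in particular $d_{1}$ lands in $\FF(1)=\FF_{2}(1_{P_{2}})$ by the morphism axiom $f(x)=1$ iff $x=1$). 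This short exact sequence produces (\ref{eq:lesQ}) by the standard snake-lemma construction, and is the natural input for the proof.

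The second step is to substitute $H_{n}(Q_{*})$ by $H_{n-1}(P_{1},\FF_{1})$ throughout (\ref{eq:lesQ}) via the isomorphism $\pi_{*}$. Under this replacement, the segment $H_{n}(P_{1}\bigcup_{f}P_{2},\FF)\xrightarrow{q_{*}}H_{n}(Q_{*})$ becomes $H_{n}(P_{1}\bigcup_{f}P_{2},\FF)\xrightarrow{\pi_{*}q_{*}}H_{n-1}(P_{1},\FF_{1})$, and by functoriality $\pi_{*}q_{*}=(\pi q)_{*}$; similarly $H_{n}(Q_{*})\xrightarrow{\delta}H_{n-1}(P_{2},\FF_{2})$ becomes $H_{n-1}(P_{1},\FF_{1})\xrightarrow{\delta\pi_{*}^{-1}}H_{n-1}(P_{2},\FF_{2})$. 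Exactness at every spot of the new sequence is preserved because at each identified position we have merely composed (on the appropriate side) with the isomorphism $\pi_{*}$, yielding precisely the sequence stated in the theorem.

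The real obstacle for the section is therefore not this theorem but Proposition \ref{prop:pi}, whose proof is deferred in the excerpt: showing that the map $\pi$ of (\ref{eq:pi}), which simply extracts the summand indexed by sequences with tail $1_{P_{1}}$ in $P_{1}$, induces an isomorphism on homology despite $Q_{*}$ being \emph{a priori} vastly bigger than $\CC_{*-1}(P_{1},\FF_{1})$. Once that proposition is in hand, the passage to the theorem described above is essentially cosmetic and requires no further argument beyond functoriality of induced maps and the naturality of the connecting homomorphism.
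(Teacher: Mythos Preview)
Your proposal is correct and matches the paper's approach exactly: the theorem is obtained by combining the long exact sequence (\ref{eq:lesQ}) with the isomorphism of Proposition \ref{prop:pi}, replacing $H_{*}(Q_{*})$ by $H_{*-1}(P_{1},\FF_{1})$ via $\pi_{*}$. The paper likewise defers all the substance to the proof of Proposition \ref{prop:pi}.
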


Before proving Proposition \ref{prop:pi} we 
introduce a number of auxillary complexes that play a role in the analysis
of the homology of $Q_*$.  For $p>0$, fix $\x=x_1\ldots x_p$ in $P_1$, 
and define a complex $A^\x_*$ by setting
$$
A^\x_q=\bigoplus_{\x\,y_1\ldots y_q}\kern-2.5mm\FF(x_1),
$$
ie: the direct sum over those $x_1\ldots x_py_1\ldots y_q$ where the $x$'s
are fixed and the $y$'s in $P_2$ are allowed to vary. 
Let $A^\x_0=\FF(x_1)$.
The differential is given by 
$$
d(\lambda\x\,y_1\ldots y_q)=\sum_{k=1}^{q}(-1)^{p+k-1}
\lambda\x\,y_1\ldots\wh{y}_k\ldots y_{q},
$$
and $d(\lambda\x\,y)\mapsto\lambda\in A^\x_0$.

Let $x\in P_1$ and $P^x$ those $y\in P_2$ with $x<y$ in $P$. Then is is easy
to see that $P^x$ is a subposet of $P$ with unique minimal element $f(x)$.
Let $\ve_p=1$ when $p$ is even and $\ve_p=(-1)^{p+q}$ when $p$ is odd.

\begin{lemma}\label{section:les:result200}
The map $\lambda\x y_1\ldots y_q\mapsto\ve_p\lambda y_1\ldots y_q$ is an
isomorphism of complexes
$$
\xymatrix{A^\x_* \ar[r]^-{\cong} & \CC_*(P^{x_p},\FF_\x)},
$$
where $\x=x_1\ldots x_p$ and
$\FF_\x$ is the constant colouring $\FF_\x(x)=\FF(x_1)$. In particular,
$A^\x_*$ is acyclic.
\end{lemma}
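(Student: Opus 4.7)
The plan is to verify directly that the prescribed map is a chain isomorphism and then to deduce acyclicity from Example~\ref{ex:homologytrivialcolouring}.

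First I would observe that for each $q\geq 1$, the free generators of $A^\x_q$ and of $\CC_q(P^{x_p},\FF_\x)$ are indexed by the \emph{same} set of strict sequences. On the $A$-side, fixing $\x=x_1\ldots x_p$ inside a strictly increasing sequence $\x\,y_1\ldots y_q$ with $y_i\in P_2\setminus 1_{P_2}$ amounts to specifying $f(x_p)\leq y_1<\cdots<y_q$ in $P_2\setminus 1_{P_2}$, which is precisely a strict sequence in $P^{x_p}\setminus 1_{P^{x_p}}$. Since the constant colouring $\FF_\x$ sends every element of $P^{x_p}$ to $\FF(x_1)$, the corresponding direct summands agree, and the map is a bijection on generators. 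In degree $0$ both modules are simply $\FF(x_1)$.

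The substantive step is to check the chain-map property. With a constant colouring the $\CC$-differential reduces to the purely simplicial boundary $d(\mu\,y_1\ldots y_q)=\sum_{k=1}^{q}(-1)^{k-1}\mu\,y_1\ldots\wh{y}_k\ldots y_{q}$, because each structure map $(\FF_\x)_{y}^{y'}$ is the identity and so the first term of the general $\CC$-formula collapses into the sum. The $A$-differential differs from this by the global sign $(-1)^p$ in each summand. The role of $\ve_p$ is precisely to absorb this discrepancy: when $p$ is even no adjustment is needed, whereas when $p$ is odd the factor $(-1)^{p+q}$ alternates in $q$ in exactly the way required to cancel the $(-1)^p$ introduced when passing from $A^\x_q$ to $A^\x_{q-1}$. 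I expect the sole obstacle to be the sign bookkeeping, with special attention needed at the boundary $q=1\to q=0$ where the $A$-differential is given by the separate rule $d(\lambda\x\,y)\mapsto\lambda$; a quick case analysis in the parity of $p$ confirms commutativity there.

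Once the isomorphism of complexes $A^\x_*\cong\CC_*(P^{x_p},\FF_\x)$ is in hand, acyclicity is immediate. The subposet $P^{x_p}$ has unique minimal element $f(x_p)$ and $\FF_\x$ is constant, so Example~\ref{ex:homologytrivialcolouring} gives the acyclicity of $\CC_*(P^{x_p},\FF_\x)$, and transport along the isomorphism yields the acyclicity of $A^\x_*$.
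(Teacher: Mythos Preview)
Your proposal is correct and follows exactly the approach the paper intends: the paper itself gives no proof beyond the remark that it is ``elementary'' and that ``acyclic-ness follows from Example~\ref{ex:homologytrivialcolouring}'', so you have faithfully supplied the details the authors leave to the reader. Your identification of generating sequences, the reduction of the $\CC$-differential to the alternating simplicial sum under the constant colouring, the parity analysis of $\ve_p$, and the final appeal to Example~\ref{ex:homologytrivialcolouring} via the minimal element $f(x_p)$ are all on target; the only delicate point is indeed the degree-zero boundary, where taking the map to be the identity on $A^\x_0=\FF(x_1)=\CC_0$ (consistent with $\ve_p|_{q=1}=1$ in both parities) makes the square commute with the paper's rule $d(\lambda\x\,y)\mapsto\lambda$.
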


The proof is elementary, and no doubt the reader can provide the details by
scrutinizing the picture,
$$
\begin{pspicture}(0,0)(15,3.5)
%\showgrid
\rput(0,-0.2){
\rput(7.5,2){\BoxedEPSF{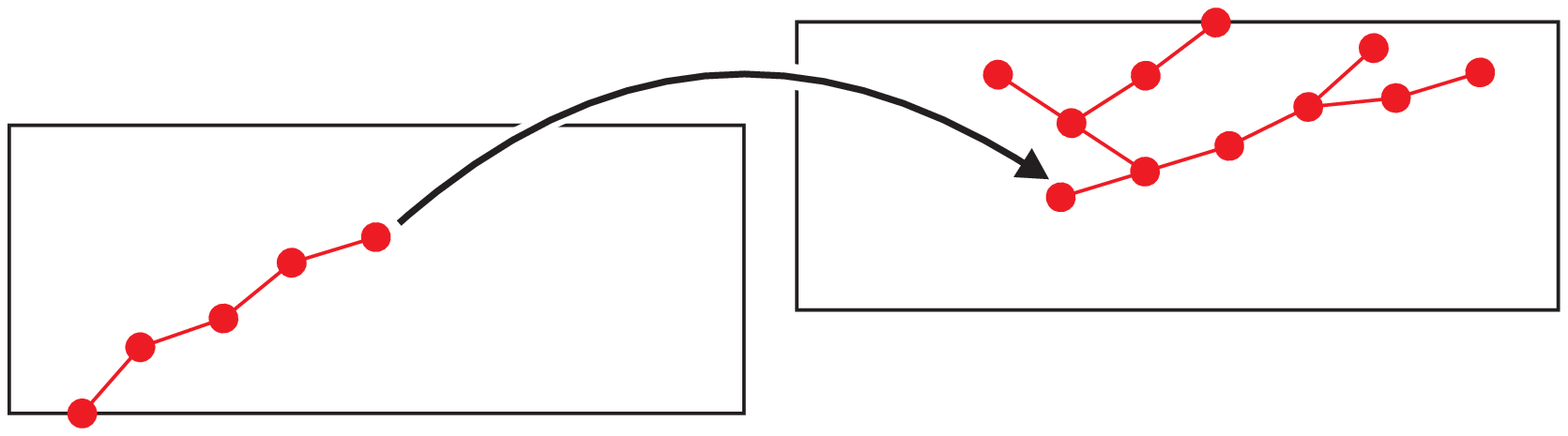 scaled 800}}
\rput(1.3,.65){$x_1$}\rput(4,1.4){$x_p$}
\rput(6.8,3.5){$f$}
\rput(6,1){$P_1$}\rput(13,2){$P_2$}
\rput(9.8,1.75){$f(x_p)$}
}
\end{pspicture}
$$
while keeping a close eye on the signage. 
Acyclic-ness follows from Example \ref{ex:homologytrivialcolouring}.

\begin{proof}[of Proposition \ref{prop:pi}] 
Let $A_n\subset Q_n$ be the direct sum
$\oplus_\x\FF(x_1)$ over those $\x=x_1\ldots x_{n-1}x_n$ where $x_n=1_1:=1_{P_1}$,
the unique maximal element in $P_1$, and decompose (as modules),
$$
Q_n = A_n \oplus D_n.
$$
Thus, $D_n$ is the direct sum over those $\x$ in $Q_n$ not finishing at $1_1$. It is
readily checked that 
$d(D_n)\subset D_{n-1}$, giving that 
$(D_*, d)$ is a subcomplex of $(Q_*,d)$. 

We now show that $D_*$ is acyclic i.e. $H_n(D_*) = 0$ for all $n$. We show this
by filtering $D_*$ and analyzing the associated spectral sequence. 
Let $p>0$, and set
\[
F_pD_n=\bigoplus_{\x}\FF(x_1),
\]
 the direct sum over those 
$\x=x_1\ldots x_jy_1\ldots y_{n-j}$ where now $1\leq j\leq p$ (and as usual,
the $x_i\in P_1$ and the $y_i\in P_2\setminus 1$). Set $F_pD_*=0$ for $p<1$.
Thus, $F_pD_n$ consists of those modules indexed by sequences of length $n$ which
have exited $P_1\subset P$ after the $p$-th element.

It is easy to check that $F_pD_*$ is a sub-complex of $D_*$, yielding
for each $n$ a bounded  filtration,
$$
0=F_0D_n\subset F_1D_n
\subset\cdots\subset F_{p-1}D_n\subset F_pD_n\subset F_{p+1}D_n\subset\cdots
\subset F_nD_n=D_n.
$$
This gives rise to a first quadrant spectral sequence converging  to $H_*D$.
The $E^0$-page is 
$$
E^0_{p,q}=
\frac{F_pD_{p+q}}{F_{p-1}D_{p+q}}=
\kern-3mm\bigoplus_{x_1\ldots x_py_1\ldots y_{q}}
\kern-5mm\FF(x_1),
$$
with differential $d^0\colon E^0_{p,q} \ra E^0_{p,q-1}$ defined by
$$
d^0(\lambda x_1\ldots x_py_1\ldots y_{q}) = 
\sum_{k=1}^q (-1)^{p+k-1}\lambda x_1\ldots x_p y_1 \ldots \widehat{y}_k \ldots y_q.
$$
Note that $E^0_{0,q}=0$ for all $q$, and thus $E^\infty_{0,q}=0$.
To compute the $E^1$-page we fix $p$ and consider separately the cases $q>0$
and $q=0$.

\subsubsection*{The case $q>0$.}
We show that any cycle in $E^0_{p,q}$ is also a boundary, and thus $E^1_{p,q}=0$.
Let 
$$
\ss=\sum_i \lambda_i x_{i,1}\ldots x_{i,p}y_{i,1}\ldots y_{i,q}
$$
be a general element of $E^0_{p,q}$. Let $\x=x_1\ldots x_p$ be a \emph{fixed\/}
sequence in $P_1$ and 
$$
\ss^\x=\sum_j \lambda\x\,y_{j,1}\ldots y_{j,q}
$$
the sum of those terms in $\ss$ with $x_{i,1}\ldots x_{i,p}=\x$. Then
$\ss=\sum_\x\ss^\x$, the sum over those $\x$ appearing as initial
segments in $\ss$, and
$$
d^0\ss=\sum_\x d^0\ss^\x.
$$
Thus, if $A_*^\x$ is the complex defined
immediately prior to Lemma \ref{section:les:result200}, 
now a subcomplex of $E^0_{p,*}$,
then $\ss^\x\in A_*^\x\subset E^0_{p,*}$, and
$d^0\ss^\x\in A_{*-1}^\x\subset E^0_{p,*-1}$. Also, if $\x\not=\w$ then
$A_*^\x\cap A_*^\w=\{0\}$ as subcomplexes of $E^0_{p,*}$, and so
$\ss$ is a cycle if and only if each $\ss^\x$ is a cycle.
But the $A_*^\x$ are acyclic, so $\ss^\x=d^0\tau^\x$ for some
$\tau^\x\in A_{*+1}^\x \subset E^0_{p,*+1}$, giving $\ss=d^0(\sum\tau^\x)$,
and thus $E^1_{p,q}=0$ as claimed.

\subsubsection*{The case $q=0$.}
Here $d^0=0$ and so the cycles are all of 
$E^0_{p,0}=\bigoplus_{\x}\FF(x_1)$, where
$\x=x_1\ldots x_p$ with $x_p\not=1_1$.
We show that $d^0:E^0_{p,1} \ra E^0_{p,0}$ is onto and
conclude that $E^1_{p,0} = 0$. 
If $\lambda x_1\ldots x_p$ is an element with $x_p\neq
1_1$, then $f(x_p)\in P_2$ is $\not=1_2=1$,
and $x_p<f(x_p)$. We then have $d^0(\lambda
x_1\ldots x_pf(x_p))=\lambda x_1\ldots x_p$ as required.

\subsubsection*{}
Thus the $E^1$-page of the spectral sequence is entirely trivial,
so that in the induced filtration of $H_*D$,
$$
\cdots \subset F_{p-1}H_n(D_*)\subset F_pH_n(D_*)\subset F_{p+1}H_n(D_*)\subset\cdots\subset 
H_n(D_*),
$$
we have trivial quotients. Thus  $F_{p-1}H_n=F_pH_n$ for all $p$ and $n$. As $F_0H_nD=0$, we
conclude that $H_n(D_*)=0$ as claimed.

To finish the proof observe that there is a short exact sequence
$$
\xymatrix{0 \ar[r]&  D_{*} \ar[r]& Q_*  \ar[r]^-{\pi} &  A_* \ar[r]  & 0},
$$
whose associated homology long exact sequence, together with the acyclic-ness 
of $D_*$, gives that the quotient map
$\pi\colon Q_* \ra A_*$ induces 
isomorphisms $\pi_*:H_n(Q_*) \ra H_n(A_*)$. 
Now, $A_n=\bigoplus_{x_1\ldots x_n}\FF(x_1)$ with $x_n=1_1$, and thus
the complex $A_*$ can be identified with $\CC_{*-1}(P_1,\FF_1)$.
Under this identification the map $\pi$ above is the map
$\pi:Q_* \ra \CC_{*-1}(P_1,\FF_1)$ of complexes defined in (\ref{eq:pi}),
finishing the proof.
\qed
\end{proof}

Let $(\B,\FF)$ be a coloured Boolean lattice of rank $r$ 
and
$\B=\B_0\bigcup_f\B_1$ a decomposition of the form given in Example 
\ref{example:glueings}.

\begin{corollary}\label{corollary:les}
There is a long exact sequence
$$
\xymatrix{
\cdots \ar[r]
& H_n({\bB_1}, {\FF_1}) \ar[r]^-{i_*}  
& H_n (\bB, \FF) \ar[r]^-{(\pi q)_*}  
& H_{n-1}({\bB_0}, {\FF_0}) \ar[r]^-{\delta\pi_*^{-1}} 
& H_{n-1}( {\bB_1}, {\FF_1}) \ar[r]  
& \cdots
}
$$
\end{corollary}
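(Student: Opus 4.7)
The corollary is a direct specialisation of Theorem \ref{section:les:result100}. The plan is simply to verify that the decomposition $\B=\B_0\bigcup_f\B_1$ from Example \ref{example:glueings} really does present $(\B,\FF)$ as the coloured poset obtained by gluing along a morphism of coloured posets, and then to invoke the theorem.

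First I would recall the data from Example \ref{example:glueings}: with the atoms of $\B$ ordered $a_1,\ldots,a_r$ and $a_\ell$ a fixed atom, $\B_0$ is the sub-Boolean lattice of rank $r-1$ whose non-zero elements have unique expression $(\ref{joinofatoms})$ not involving $a_\ell$, and $\B_1$ is the sub-Boolean lattice of those elements whose unique expression does involve $a_\ell$. The map $f\colon\B_0\rightarrow\B_1$ is $f(x)=x\vee a_\ell$, which is evidently a poset map preserving the maximal element only of $\B_0$, and the collection $\tau_x=\FF_x^{f(x)}$ is a natural transformation since naturality at a cover $x<_c y$ in $\B_0$ is just functoriality of $\FF$ restricted to the square $x,y,f(x),f(y)$ in $\B$.

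Next I would check that the coloured poset $\B_0\bigcup_f\B_1$ built from this morphism coincides with $(\B,\FF)$ itself. On the level of underlying sets this is clear as $\B=\B_0\cup\B_1$. For the order, if $a,a'$ lie in the same $\B_i$ the two orders agree by definition, while if $a\in\B_0$ and $a'\in\B_1$ then in $\B$ we have $a\leq a'$ iff $a\vee a_\ell\leq a'$ (since $a_\ell\leq a'$), which is exactly $f(a)\leq a'$ in $\B_1$, matching the definition of the glued poset. The colouring agrees on each piece, and for $a\in\B_0$, $a'\in\B_1$ with $a\leq a'$ in $\B$ the map $\FF_a^{a'}$ factors as $\FF_{f(a)}^{a'}\circ\FF_a^{f(a)}=(\FF_1)_{f(a)}^{a'}\circ\tau_a$ by functoriality of $\FF$, which is precisely the glued colouring.

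With these identifications in hand, the corollary is immediate from Theorem \ref{section:les:result100} applied to $(f,\tau)\colon(\B_0,\FF_0)\rightarrow(\B_1,\FF_1)$: the theorem produces the long exact sequence
$$
\xymatrix{\cdots \ar[r] & H_n(\B_1,\FF_1)\ar[r]^-{i_*} & H_n(\B_0\bigcup_f\B_1,\FF)\ar[r]^-{(\pi q)_*} & H_{n-1}(\B_0,\FF_0)\ar[r]^-{\delta\pi_*^{-1}} & H_{n-1}(\B_1,\FF_1)\ar[r] & \cdots}
$$
which under the identification $\B_0\bigcup_f\B_1=\B$ is exactly the sequence claimed. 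There is no real obstacle here beyond correctly matching the indexing conventions of the theorem with those of Example \ref{example:glueings}; the only mild point of care is that in the theorem statement $P_1$ plays the role of the source of the gluing morphism (here $\B_0$) and $P_2$ the target (here $\B_1$), so the roles appear ``swapped'' relative to the subscripts on the right of the long exact sequence.
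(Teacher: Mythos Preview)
Your proposal is correct and follows the paper's own approach: the paper states the corollary immediately after Theorem \ref{section:les:result100} with the preamble ``Let $(\B,\FF)$ be a coloured Boolean lattice of rank $r$ and $\B=\B_0\bigcup_f\B_1$ a decomposition of the form given in Example \ref{example:glueings}'', treating the result as an immediate specialisation and giving no further proof. You have simply spelled out the verification that the glued coloured poset really is $(\B,\FF)$, which the paper leaves implicit.
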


\section{The cube complex of a Boolean lattice and its homology}
\label{section:cubecomplex}

We now recall a construction, first due to Khovanov \cite{Khovanov00}, of a complex
from a coloured Boolean lattice. It is central to the definition 
of the Khovanov homology of a link and is used in one of the recent 
combinatorial formulations of Heegaard-Floer knot homology. The reader should
be aware that we are grading everything {\em homologically}, whereas in the
applications cited above it is traditional to use cohomological conventions.

Let $\B$ be a Boolean lattice of rank $r$ with ordered
atoms $a_1,\ldots,a_r$, and colouring $\FF:\B\rightarrow\rmod$,
and recall the unique expression 
(\ref{joinofatoms})
for an element of $\B$ as a join of the $a_i$
(this replaces the conventions in earlier, non-lattice oriented, 
literature on Khovanov homology, where the elements of $\B$ were $r$-strings of
$0$'s and $1$'s, and the atoms those $r$-strings containing a single $1$).
Write $1:=1_\B$, the join of all the atoms.

If $x<_c y$, then let $\ve(x<_c y)=(-1)^j$ where $j$
is the number of atoms appearing before $a_\ell$ in the unique expression
for $y$ (see (\ref{joinofatoms}) and the comments following it).
If 
$\x=x_1<_c x_2<_c\cdots<_c x_k$ is a saturated sequence in $\B$, let
$$
\ve_\x=\ve(x_1\ldots x_k)
=\ve(x_1<_c x_2<_c\cdots<_c x_k):=\prod\ve(x_i<_c x_{i+1}).
$$
If $1_0=a_2\vee\cdots\vee a_r$, then observe that $\ve(1_0<_c 1)=1$.

Khovanov's \emph{cube complex \/}
$\KK_{*}(\B,\FF)$ is then defined to have chain
modules,
$$
\KK_k=\bigoplus_{\rk x=r-k}\kern-3mm\FF(x),
$$
and differential $d_k:\KK_k(\B,\FF)\rightarrow\KK_{k-1}(\B,\FF)$,
$$
d_k(\lambda)=\sum\ve(x<_c y)\FF_x^y(\lambda),
$$
where $\lambda\in \FF(x)$ with $\rk x=r-k$, and the sum is over all $y$ covering $x$.
Thus, $d(\KK_k)\subset\KK_{k-1}$ with $d=\sum_{\rk x=r-k}\ve(x<_c y)\FF_x^y$.
Observe that in degree zero the chains are just $\FF(1)$, in degree
$r$ they are $\FF(0)$, and $\KK_k=0$ outside of the range $0\leq k\leq r$.
To see that $d$ is a differential, observe that if $x<_c z<_c y$ in
$\B$, then there is a unique $z'$ with $x<_c z'<_c y$, and that
$\ve(x<_c z<_c y)=\ve(x<_c z'<_c y)$, ie: consecutive edges
of the Hasse diagram for a Boolean lattice can always be completed to form a 
square in a unique way, and all squares anticommute. As $d$ is a sum over such
squares we get $d^2=0$. 

Write $H^\diamond_*(\B,\FF)=H_*(\KK_*(\B,\FF))$ for the homology 
of the cube complex. It should be noted that $H^\diamond(-)$ is not
natural with respect to morphisms of coloured Boolean lattices in
general. It is, however, natural with respect to morphisms $(f,\tau)$ for
which $f$ is a co-rank preserving injection.

The Khovanov homology of an oriented link diagram is defined as (a normalised version of) 
the homology
of the cube complex associated to the coloured Boolean lattice defined in Example 
\ref{ex:khovanov}. A small class of (graded) Frobenius algebras result in a homology 
theory that is invariant under Reidemeister moves of diagrams, thus
giving a genuine invariant of links. The reader wishing to make this
precise should be warned that here our homological grading
conventions conflict with Khovanov's cohomological ones, and so care is
needed (see \S \ref{sec:normalisation}). 

Similarly, the combinatorial interpretation of Heegaard-Floer knot homology is defined as
the homology
of the cube complex associated to the coloured Boolean lattice of Example \ref{ex:os}.

\begin{figure}
\begin{pspicture}(0,0)(15,7)
%\showgrid
\rput(0,-2){
\psline[linewidth=.3mm,linestyle=dotted,dotsep=2pt]{->}(5.7,3.5)(11.5,3.5)
\rput*(9,3.5){$\sum\ve(0<_c a_{i})\FF_{0}^{a_{i}}$}}
\rput(0,0){
\psline[linewidth=.3mm,linestyle=dotted,dotsep=2pt]{->}(6.7,3.5)(11.5,3.5)
\rput*(9,3.5){$\sum\ve(a_i<_c a_{ij})\FF_{a_i}^{a_{ij}}$}}
\rput(0,2){
\psline[linewidth=.3mm,linestyle=dotted,dotsep=2pt]{->}(5.7,3.5)(11.5,3.5)
\rput*(9,3.5){$\sum\ve(a_{ij}<_c 1)\FF_{a_{ij}}^{1}$}}
\psline[linewidth=.3mm,linestyle=dotted,dotsep=2pt]{->}(4.7,.5)(11.5,.5)
\psline[linewidth=.3mm,linestyle=dotted,dotsep=2pt]{->}(2.7,4.5)(11.2,4.5)
\psline[linewidth=.3mm,linestyle=dotted,dotsep=2pt]{->}(2.7,2.5)(11.2,2.5)
\psline[linewidth=.3mm,linestyle=dotted,dotsep=2pt]{->}(4.7,6.5)(11.5,6.5)
\rput(2,.2){%moves the cube
\rput(2,0){\psframe[fillstyle=solid,fillcolor=white](0,0)(.7,.6)\rput(.35,.3){$0$}}
\rput(0,2){\psframe[fillstyle=solid,fillcolor=white](0,0)(.7,.6)\rput(.35,.3){$a_{1}$}}
\rput(2,2){\psframe[fillstyle=solid,fillcolor=white](0,0)(.7,.6)\rput(.35,.3){$a_{2}$}}
\rput(4,2){\psframe[fillstyle=solid,fillcolor=white](0,0)(.7,.6)\rput(.35,.3){$a_{3}$}}
\rput(0,4){\psframe[fillstyle=solid,fillcolor=white](0,0)(.7,.6)\rput(.35,.3){$a_{12}$}}
\rput(2,4){\psframe[fillstyle=solid,fillcolor=white](0,0)(.7,.6)\rput(.35,.3){$a_{13}$}}
\rput(4,4){\psframe[fillstyle=solid,fillcolor=white](0,0)(.7,.6)\rput(.35,.3){$a_{23}$}}
\rput(2,6){\psframe[fillstyle=solid,fillcolor=white](0,0)(.7,.6)\rput(.35,.3){$1$}}
\psline[linewidth=.2mm]{->}(2,.7)(.8,1.9)
\psline[linewidth=.2mm]{->}(2.35,.7)(2.35,1.9)
\psline[linewidth=.2mm]{->}(2.7,.7)(3.9,1.9)
\psline[linewidth=.2mm]{->}(.35,2.7)(.35,3.9)
\psline[linewidth=.2mm]{->}(.7,2.7)(1.9,3.9)
\rput{45}(1.35,3.2){\psframe[fillstyle=solid,linecolor=white,
fillcolor=white](0,0)(.2,.2)}
\psline[linewidth=.2mm]{->}(2,2.7)(.8,3.9)
\psline[linewidth=.2mm]{->}(4,2.7)(2.8,3.9)
\rput{45}(3.35,3.2){\psframe[fillstyle=solid,linecolor=white,
fillcolor=white](0,0)(.2,.2)}
\psline[linewidth=.2mm]{->}(2.7,2.7)(3.9,3.9)
\psline[linewidth=.2mm]{->}(4.35,2.7)(4.35,3.9)
\psline[linewidth=.2mm]{->}(.7,4.7)(1.9,5.9)
\psline[linewidth=.2mm]{->}(2.35,4.7)(2.35,5.9)
\psline[linewidth=.2mm]{->}(4,4.7)(2.8,5.9)
\rput(1.2,1.3){${\scriptstyle\red{+}}$}
\rput(2.5,1.3){${\scriptstyle\red{+}}$}
\rput(3.5,1.3){${\scriptstyle\red{+}}$}
\rput(.2,3.3){${\scriptstyle\red{-}}$}
\rput(1.2,3){${\scriptstyle\red{-}}$}
\rput(1.9,3){${\scriptstyle\red{+}}$}
\rput(2.8,3){${\scriptstyle\red{-}}$}
\rput(3.5,3){${\scriptstyle\red{+}}$}
\rput(4.5,3.3){${\scriptstyle\red{+}}$}
\rput(1.1,5.3){${\scriptstyle\red{+}}$}
\rput(2.55,5.3){${\scriptstyle\red{-}}$}
\rput(3.6,5.3){${\scriptstyle\red{+}}$}
}
\rput(12,0.2){%moves the complex
\rput(0,0.3){$\FF(0)$}
\rput(0,2.3){$\bigoplus\FF(a_i)$}
\rput(0,4.3){$\bigoplus\FF(a_{ij})$}
\rput(0,6.3){$\FF(1)$}
\psline[linewidth=.2mm]{->}(0,.5)(0,2.1)
\psline[linewidth=.2mm]{->}(0,2.5)(0,4.1)
\psline[linewidth=.2mm]{->}(0,4.5)(0,6.1)
\rput(-.2,1.3){$d_3$}\rput(-.2,3.3){$d_2$}\rput(-.2,5.3){$d_1$}
}
\rput(1,.2){%moves ranks
\rput(0,0.3){rank $0$}
\rput(0,2.3){rank $1$}
\rput(0,4.3){rank $2$}
\rput(0,6.3){rank $3$}
}
\rput(14,.2){%moves degrees
\rput(0,0.3){degree $3$}
\rput(0,2.3){degree $2$}
\rput(0,4.3){degree $1$}
\rput(0,6.3){degree $0$}
}
\rput*(3.35,2.5){$\oplus$}\rput*(5.35,2.5){$\oplus$}
\rput*(3.35,4.5){$\oplus$}\rput*(5.35,4.5){$\oplus$}
\end{pspicture}
\caption{The cube complex $\KK_*(\B,\FF)$ for the Boolean lattice of rank $3$ (after
Bar-Natan \cite{Bar-Natan02}). The join $a_i\vee a_j$ has been abbreviated 
$a_{ij}$. The edges $x<_c y$ of the Hasse diagram for $\B$ have been labelled
with the Khovanov signage $\ve(x<_c y)$.}
\end{figure}
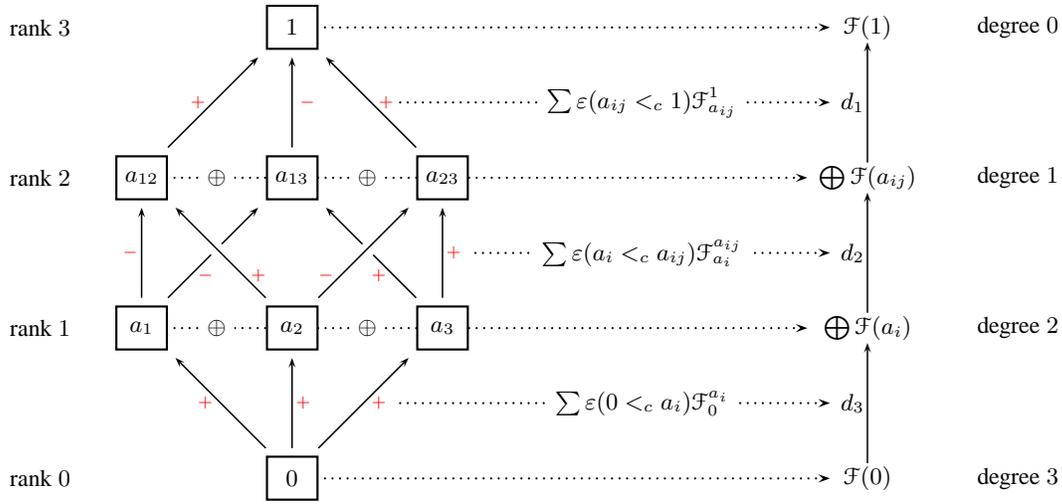

The decomposition $\B=\B_0\bigcup_f\B_1$ of Example \ref{example:glueings}
yields a long exact sequence similar to that obtained for coloured poset
homology described in the last section.
As the $\B_i$ $(i=0,1)$ are Boolean
of rank $r-1$, we may form the associated cube complexes
$\KK_*(\B_i,\FF_i)$ where $\FF_i$ is the restriction of $\FF$ to
$\B_i$. As with the complex $\CC_*$ in \S\ref{section:les}, 
$\KK_*(\B_1,\FF_1)$ is a subcomplex
of $\KK_*(\B,\FF)$, but now the quotient is considerably simpler, for
the map,
$$
\sum_{\rk x=r-k}\lambda_x=\sum_{x\not\in\B_0}\mu_x^{}+\sum_{x\in\B_0}\nu_x^{}
\mapsto\sum_{x\in\B_0}\nu_x^{},
$$
gives an isomorphism of complexes,
$\KK_*(\B,\FF)/\KK_*(\B_1,\FF_1)\rightarrow\KK_{*-1}(\B_0,\FF_0)$,
and thus a short exact sequence,
$$
0 \ra \KK_{*}(\B_1, \FF_1) \ra \KK_*(\B,\FF) \ra \KK_{*-1}(\B_0, \FF_0) \ra 0. 
$$
This sequence is well known, although the degree drop in our version happens
in the quotient, rather than the subcomplex, as we are grading $\KK_*$
homologically, rather than cohomologically.
Finally, we have the induced long exact sequence in homology,
$$
\xymatrix{
\cdots \ar[r]
& H_n^\diamond(\B_1,\FF_1) \ar[r] & H_n^\diamond(\B,\FF) \ar[r] &
H_{n-1}^\diamond(\B_0,\FF_0) \ar[r] & H_{n-1}^\diamond(\B_1,\FF_1)
\ar[r] & \cdots } 
$$ 

In Khovanov homology for links, if $(\B,\FF)$ is
the coloured Boolean lattice of a diagram $D$ (see Example
\ref{ex:khovanov}) then $(\B_0,\FF_0)$ and $(\B_1,\FF_1)$ can be interpreted as the 
coloured
lattices associated to diagrams $D_0$ and $D_1$ obtained from $D$ by resolving a chosen
crossing in $D$ to a 0- and 1-smoothing respectively. In this case the above 
long exact sequence
is a homological incarnation of the kind of skein relation found in the 
definition of certain knot polynomials.

\section{A quasi-isomorphism}\label{main:theorem}

We now have two chain complexes, and their homologies, associated to a coloured
Boolean lattice: the coloured poset  homology 
$H_*(\B,\FF)$ of the complex $\CC_*(\B,\FF)$ from
\S\ref{section:homology}, and the 
homology $H^\diamond(\B,\FF)$ of the cube complex defined in 
\S\ref{section:cubecomplex}.
In this section we describe a chain map $\phi$ from the cube complex
to $\CC_*(\B,\FF)$, and show that it turns out to be a quasi-isomorphism. The main
result is the following, whose proof appears at the end of the section.

\begin{theorem}\label{thm:main}
Let $(\B,\FF)$ be a coloured Boolean lattice. 
Then $\phi:\KK_*(\B,\FF)\ra\CC_*(\B,\FF)$ defined below
is a quasi-isomorphism, yielding isomorphisms,
$$
\xymatrix{H_{n}^\diamond(\B,\FF) \ar[r]^-{\cong} & H_{n}(\B,\FF)}.
$$
\end{theorem}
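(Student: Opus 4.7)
The plan is to proceed by induction on the rank $r$ of $\B$, combining Corollary \ref{corollary:les} with the parallel long exact sequence for the cube complex derived at the end of \S\ref{section:cubecomplex}. First I would define $\phi\colon\KK_k(\B,\FF)\to\CC_k(\B,\FF)$ by
$$
\phi(\lambda)=\sum_{\mathbf{x}}\ve(x_1<_c\cdots<_c x_k<_c 1)\,\lambda\, x_1x_2\cdots x_k
$$
for $\lambda\in\FF(x)\subseteq\KK_k(\B,\FF)$, the sum ranging over saturated sequences $\mathbf{x}=x_1<_c\cdots<_c x_k$ with $x_1=x$, signed by the Khovanov sign of the extended chain $x_1<_c\cdots<_c x_k<_c 1$. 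Since $\rk(x)=r-k$ and the $x_i$ lie in $\B\setminus\{1\}$, any length-$k$ sequence from $x$ is automatically saturated and terminates at a co-atom of $\B$, so this map does land in $\CC_k$. Checking that $\phi$ is a chain map is the first (routine) step: it reduces to the combinatorial fact used in \S\ref{section:cubecomplex} that any two consecutive covers in $\B$ complete uniquely to a square whose edges anticommute under the Khovanov signs, together with cancellation of the ``skip'' terms in the differential of $\CC_*$.

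For the inductive step (the base case $r\leq 1$ being immediate, since $\phi$ is the identity there) I would apply Example \ref{example:glueings} with the distinguished atom chosen to be $a_r$, so that the Khovanov edge signs of $\B$ restrict without modification to those of the sub-lattices $\B_0$ and $\B_1$. The decomposition $\B=\B_0\bigcup_f\B_1$ slots both complexes into parallel short exact sequences
$$
0\to\KK_*(\B_1,\FF_1)\to\KK_*(\B,\FF)\to\KK_{*-1}(\B_0,\FF_0)\to 0
$$
and
$$
0\to\CC_*(\B_1,\FF_1)\to\CC_*(\B,\FF)\to Q_*\to 0,
$$
where $\pi\colon Q_*\to\CC_{*-1}(\B_0,\FF_0)$ is a quasi-isomorphism by Proposition \ref{prop:pi}. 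I would then verify two compatibility squares relating these sequences through $\phi$. The left square is strictly commutative: for $x\in\B_1$, every saturated sequence from $x$ stays in $\B_1$ and all the edge signs agree. For the right square, tracing $\pi$ from (\ref{eq:pi}), the composite $\pi\circ q\circ\phi_\B(\lambda)$ for $\lambda\in\FF(x)$ with $x\in\B_0$ retains exactly those saturated chains from $x$ that lie entirely in $\B_0$ and terminate at $1_{\B_0}$, then drops the terminal element; this reproduces $\phi_{\B_0}(\lambda)$ up to the global sign $\ve(1_{\B_0}<_c 1)=(-1)^{r-1}$.

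These compatibilities induce a morphism of the two long exact sequences in homology whose outer vertical maps are isomorphisms by the inductive hypothesis (a global sign on one map does not disrupt this). The 5-lemma then forces $(\phi_\B)_*$ to be an isomorphism, completing the induction. The main obstacle will be the sign bookkeeping in the two compatibility squares: one must verify that the Khovanov signs transform correctly under the restriction to the sub-lattices $\B_0,\B_1$ and under the identification of $Q_*$ with $\CC_{*-1}(\B_0,\FF_0)$ provided by $\pi$. Everything else in the argument is purely formal once $\phi$ and the two long exact sequences are in hand.
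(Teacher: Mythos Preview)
Your proposal is correct and follows essentially the same route as the paper: define $\phi$ via signed saturated chains, verify it is a chain map, decompose $\B=\B_0\cup_f\B_1$ as in Example~\ref{example:glueings}, fit $\phi$ into a map between the two short exact sequences, and finish by induction on the rank via the $5$-lemma and Proposition~\ref{prop:pi}. The one genuine difference is the choice of distinguished atom: you take $\ell=r$, whereas the paper takes $\ell=1$. Your choice makes the Khovanov edge signs on both $\B_0$ and $\B_1$ restrict without modification (so the left square commutes on the nose), at the cost of the global factor $\ve(1_{\B_0}<_c 1)=(-1)^{r-1}$ in the right square, which you correctly note is harmless. The paper's choice $\ell=1$ is made precisely so that $\ve(1_0<_c 1)=1$ and the right square (Lemma~\ref{lem:phiprime}) commutes exactly; the left square is then handled by observing that any sign discrepancy between $\phi_\B|_{\B_1}$ and the intrinsic $\phi_{\B_1}$ is a degree-wise global sign and so does not affect the quasi-isomorphism claim needed for the induction. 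Either bookkeeping works.
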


We now define the map $\phi$. 
Let $\lambda\in\FF(x)$ for $x\in\B$, and $\x=x_1<_c\cdots<_c x_k$ a saturated sequence
in $\B$ from $x$ to $1$, ie: with $x_1=x$ and $x_k=1$,
and let $\x^\circ=x_1<_c\cdots<_c x_{k-1}$.
Recalling the definition
of $\ve_\x\in\{\pm 1\}$ from \S\ref{section:cubecomplex}, set
$\phi:\KK_n(\B,\FF)\rightarrow\CC_n(\B,\FF)$ to be
$$
\phi(\lambda)=\sum_\x\ve_\x \lambda\,\x^\circ,
$$
the sum over all saturated sequences $\x\in\B$ from $x$ to $1$.

\begin{figure}
\begin{pspicture}(0,0)(15,4)
%\showgrid
%\rput(1,2){Khovanov signage:}
\rput(-2,0){
\rput(4,0.2){$\red{x=0}$}
\rput(2.7,1.4){$x_1$}\rput(4.3,1.4){$x_2$}
\rput(5.3,1.4){$x_3$}\rput(2.6,2.65){$x_{12}$}
\rput(4.35,2.65){$x_{13}$}\rput(5.4,2.65){$x_{23}$}
\rput(4,3.8){$1$}
\rput(3.2,1){${\red{\scriptstyle +}}$}\rput(4.15,1){${\red{\scriptstyle +}}$}
\rput(4.8,1){${\red{\scriptstyle +}}$}\rput(2.75,2){${\red{\scriptstyle -}}$}
\rput(5.25,2){${\red{\scriptstyle +}}$}
\rput(3.55,1.75){${\red{\scriptstyle +}}$}\rput(4.45,1.75){${\red{\scriptstyle -}}$}
\rput(3.55,2.35){${\red{\scriptstyle -}}$}\rput(4.45,2.35){${\red{\scriptstyle +}}$}
\rput(3.2,3.1){${\red{\scriptstyle +}}$}\rput(4.15,3.1){${\red{\scriptstyle -}}$}
\rput(4.8,3.1){${\red{\scriptstyle +}}$}
\rput(4,2){\BoxedEPSF{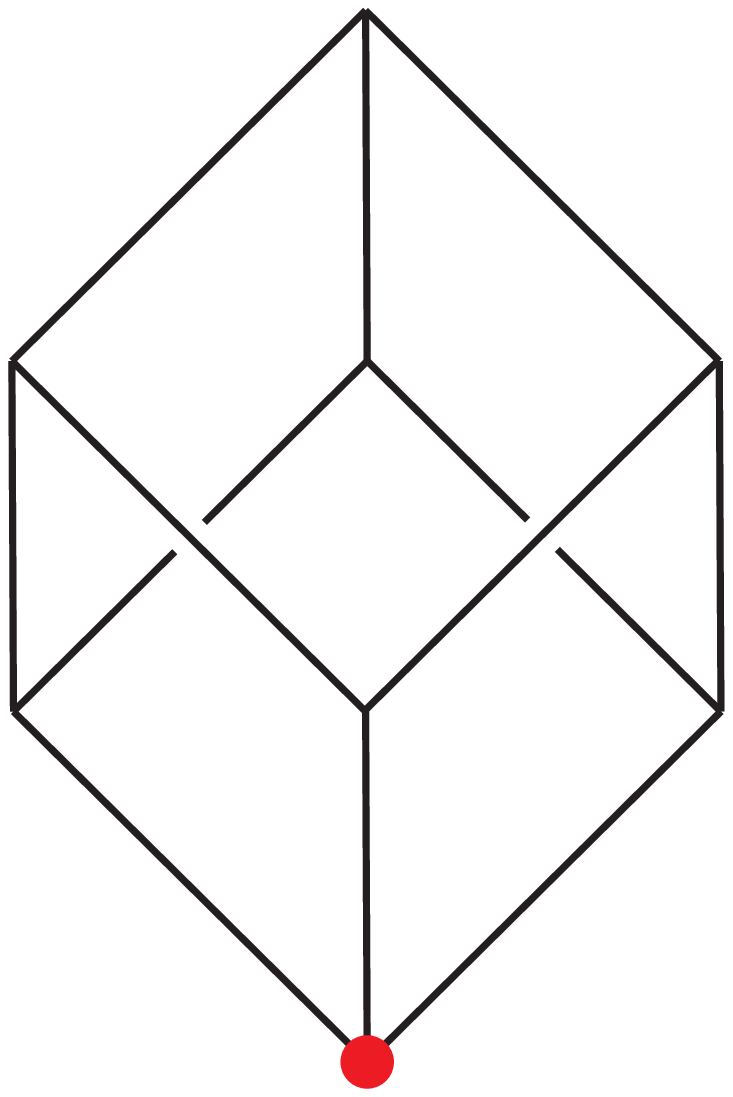 scaled 300}}
}
\psline[linewidth=.2mm]{->}(3.7,2)(4.6,2)
\rput(-1.5,0){
\rput(7.7,1.3){${\scriptstyle +}$}\rput(8.3,1.3){${\scriptstyle -}$}
\rput(6.4,2){${\scriptstyle -}$}\rput(9.6,2){${\scriptstyle +}$}
\rput(7.7,2.7){${\scriptstyle +}$}\rput(8.3,2.7){${\scriptstyle -}$}
\rput(8,2){\BoxedEPSF{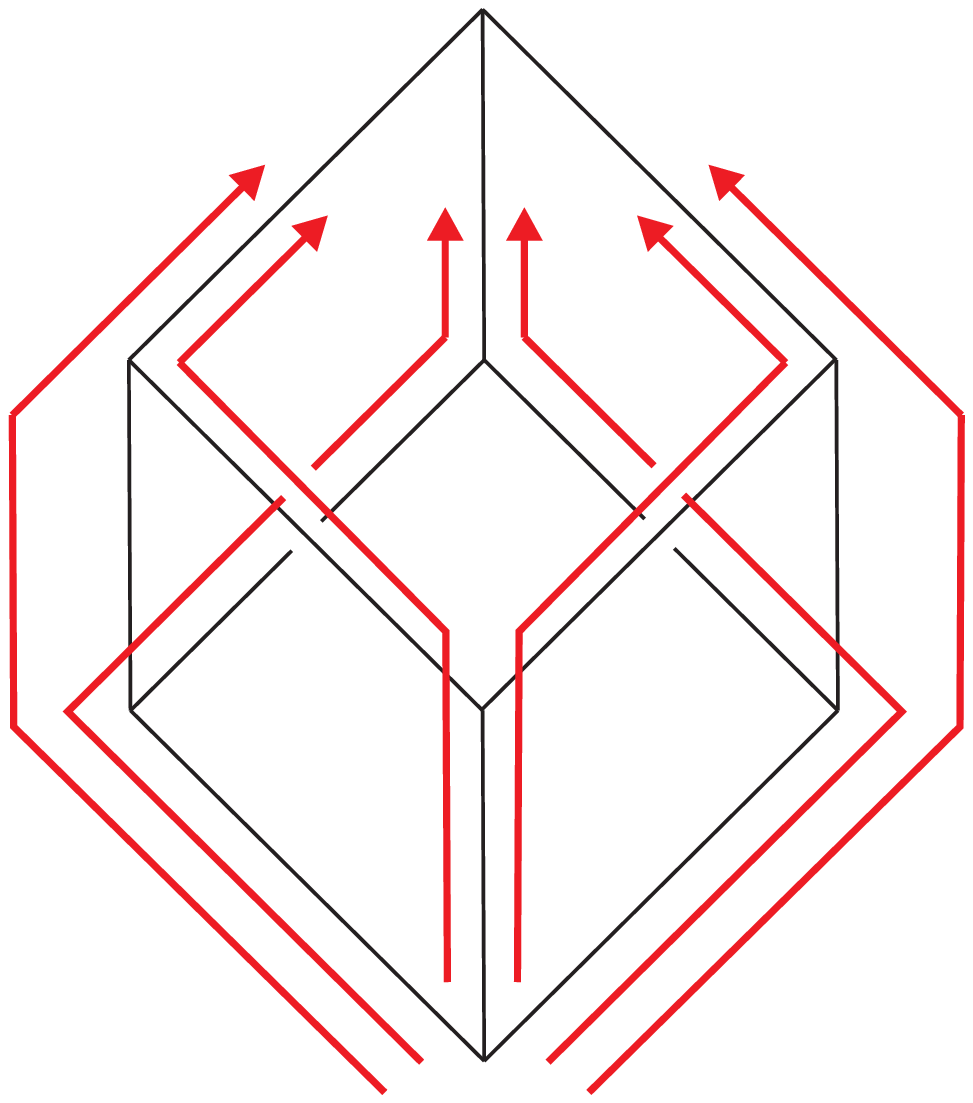 scaled 300}}
}
\psline[linewidth=.2mm]{->}(8.4,2)(9.3,2)
\rput(12.2,2){$\lambda\in\FF(x)
\kern-1mm\xymatrix{\ar@{|->}[r]^-{\phi}&}
\kern-2mm\left\{\begin{array}{l}-\lambda\,x_1x_{12}+\lambda\,x_1x_{13}\\
                       +\lambda\,x_2x_{12}-\lambda\,x_2x_{23}\\
                       -\lambda\,x_3x_{13}+\lambda\,x_3x_{23}\end{array}\right.$}
\end{pspicture}
\caption{The inclusion chain map $\phi:\KK_*(\B,\FF)\rightarrow\CC_*(\B,\FF)$:
the Boolean lattice
of rank $3$ (left) is marked with the Khovanov signage $\ve(x<_cy)$; the saturated
chains $\x$ starting at $x$ ($=0$ in this example) and finishing at $1$ are
marked (middle) with the resulting $\ve_\x$, and the image (right) of $\lambda\in\FF(x)$.}
\end{figure}

\begin{proposition}
$\phi:\KK_n(\B,\FF)\rightarrow\CC_n(\B,\FF)$ is a chain map.
\end{proposition}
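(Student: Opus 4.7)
The plan is to verify the identity $\phi\circ d_K = d_C\circ\phi$ by expanding both sides for a generator $\lambda\in\FF(x)$ with $\rk x=r-n$, matching the ``main'' terms, and then showing that the ``leftover'' terms cancel in pairs by exploiting the anticommuting-square property of the Khovanov signage on a Boolean lattice.

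First I would unfold $\phi d_K(\lambda)$. Since $d_K(\lambda)=\sum_{x<_c y}\ve(x<_cy)\FF_x^y(\lambda)$, applying $\phi$ sums, over pairs $(y,\y)$ with $x<_c y$ and $\y=y<_c y_2<_c\cdots<_c y_n=1$ a saturated chain from $y$ to $1$, the term $\ve(x<_cy)\ve_{\y}\FF_x^y(\lambda)\,y\,y_2\cdots y_{n-1}$. Reading this as a sum indexed by saturated chains $\x=x_1<_c x_2<_c\cdots<_c x_{n+1}=1$ from $x$ to $1$, and using $\ve(x<_c y)\ve_{\y}=\ve_{\x}$, yields
\[
\phi d_K(\lambda)\;=\;\sum_{\x}\ve_{\x}\,\FF_{x_1}^{x_2}(\lambda)\,x_2\cdots x_n.
\]

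Next I would unfold $d_C\phi(\lambda)=\sum_{\x}\ve_{\x}\,d_C(\lambda\,x_1\cdots x_n)$ using the formula for $d_C$ from Section~\ref{section:homology}. This produces two types of contributions:
\begin{itemize}
\item[(A)] $\displaystyle\sum_{\x}\ve_{\x}\,\FF_{x_1}^{x_2}(\lambda)\,x_2\cdots x_n$,
\item[(B)] $\displaystyle -\sum_{\x}\ve_{\x}\sum_{i=2}^{n}(-1)^{i}\,\lambda\,x_1\cdots\widehat{x}_i\cdots x_n$.
\end{itemize}
Sum (A) agrees with $\phi d_K(\lambda)$ term-by-term (noting $x_1=x$), so the proof reduces to showing that sum (B) vanishes.

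For this, I would fix $i\in\{2,\dots,n\}$ and set up a sign-reversing involution on the set of saturated chains $\x$ from $x$ to $1$. Since $\B$ is Boolean, for any saturated subchain $x_{i-1}<_c x_i<_c x_{i+1}$ there is a \emph{unique} element $x_i'\ne x_i$ with $x_{i-1}<_c x_i'<_c x_{i+1}$ (the Hasse diagram completes into a square), and the defining property of the Khovanov signage is exactly
\[
\ve(x_{i-1}<_c x_i)\,\ve(x_i<_c x_{i+1})\;=\;-\ve(x_{i-1}<_c x_i')\,\ve(x_i'<_c x_{i+1}).
\]
Replacing $x_i$ by $x_i'$ therefore defines a fixed-point-free involution on chains satisfying $\ve_{\x'}=-\ve_{\x}$, and visibly $\lambda\,x_1\cdots\widehat{x}_i\cdots x_n=\lambda\,x_1\cdots\widehat{x_i'}\cdots x_n$. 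The paired terms in (B) thus cancel, for each $i$ separately; hence (B)$=0$ and the two sides of $\phi\circ d_K=d_C\circ\phi$ coincide.

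The main (and essentially the only) obstacle is sign bookkeeping: one must check carefully that the $(-1)^i$ built into the $\CC_*$ differential combines properly with the square-anticommutativity of $\ve$ so that the involution $x_i\leftrightarrow x_i'$ really does reverse signs, including the extremal case $i=n$ (where $x_{i+1}=1$) and the case $i=2$ (where the involution must leave $x_1=x$ untouched). Once the signs are tracked, everything reduces to the single combinatorial fact that opposite sides of every $2$-face of the Boolean cube carry opposite signs — the same fact that makes $d_K^{2}=0$.
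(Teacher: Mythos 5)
Your proposal is correct and follows essentially the same route as the paper's proof: expand both $\phi d_K$ and $d_C\phi$ over saturated chains, identify the matching ``drop the first element and apply $\FF_{x_1}^{x_2}$'' terms, and cancel the remaining deletion terms in pairs via the unique square-completion $x_i\leftrightarrow x_i'$ together with the anticommutativity $\ve(x_{i-1}x_ix_{i+1})=-\ve(x_{i-1}x_i'x_{i+1})$ of the Khovanov signage. The sign bookkeeping you flag, including the case $i=n$ where the square is completed using the element $x_{n+1}=1$ retained in $\x$ but omitted from $\x^\circ$, works out exactly as you describe.
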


\begin{proof}
Is accomplished by a brute force comparison of the maps $\phi d$ and $d \phi$
(where the $d$'s are the differentials in $\KK_*$ and $\CC_*$ respectively).
Let $\lambda\in\FF(x)\subset\KK_n$, and $x_1,\ldots,x_{r-n}$
be the $r-n$ elements of $\B$ covering $x$. Then,
$$
\xymatrix{
\lambda\ar@{|->}[r]^-{d}
&{\displaystyle \sum_{j=1}^{r-n}\ve(xx_j)\,\FF_{x}^{x_j}(\lambda) \ar@{|->}[r]^-{\phi}}
&{\displaystyle \sum_{j=1}^{r-n}\ve(xx_j)\sum_\x\ve(x_j\ldots x_{j_i}<_c 1)
\,\FF_{x}^{x_j}(\lambda)x_j\ldots x_{j_i}}},
$$
with the second summation over the saturated sequences $\x$ from $x_j$ to $1$.
On the other hand,
$$
\xymatrix{
\lambda\ar@{|->}[r]^-{\phi}
&{\displaystyle \sum_{j=1}^{r-n}\ve(xx_j)\sum_\x\ve(x_j\ldots x_{j_i}<_c 1)
\,\lambda\,xx_j\ldots x_{j_i}}},
$$
with again the second sum over the saturated chains $\x$ from $x_j$ to $1$.
In the image of this under the differential $d$ of the complex $\CC_*$, 
each
of the $r-n$ terms contributes a term of the form
$\ve(xx_j)\sum_\x\ve(x_j\ldots x_{j_i}<_c 1)
\,\FF_{x}^{x_j}(\lambda)x_j\ldots x_{j_i}$, obtained by dropping the $x$ from
the chain $xx_j\ldots x_{j_i}$. All the other terms have the form
\begin{equation}\label{equation100}
\ve(xx_j)\ve(x_j\ldots x_{j_i}<_c 1)(-1)^{k}
\lambda\,xx_j\ldots \wh{x}_k\ldots x_{j_i},
\end{equation}
for $j\leq k\leq j_i$, and where 
$\ve(xx_j)\ve(x_j\ldots x_{j_i}<_c 1)=\ve(xx_j\ldots x_{j_i}<_c 1)$.
The proof is thus completed by showing that all these terms cancel.
As already observed, for any chain $x_{k-1}<_c x_k<_c x_{k+1}$ in $\B$ there is a unique
$y_k\not=x_k$ with $x_{k-1}<_c y_k<_c x_{k+1}$, 
and $\ve(x_{k-1}x_kx_{k+1})=-\ve(x_{k-1}y_kx_{k+1})$. Thus, there is a 
matching term to (\ref{equation100}), indexed by 
$xx_j\ldots \wh{y}_k\ldots x_{j_i}$, and otherwise identical in all respects
except for having opposite sign. This completes the proof.
\qed
\end{proof}

We now bring in the decomposition $\B=\B_0\bigcup_f\B_1$ of the Boolean
lattice of Example \ref{example:glueings} for $\ell=1$. 
Notice that if $x\in\B_1$ and $\x$ is a sequence (saturated or not)
starting at $x$, then $\x$ is completely contained in the sublattice $\B_1$.
Thus in particular, when $\lambda\in\FF(x)$,
we have that $\phi(\lambda)$ is in the subcomplex 
$\CC_*(\B_1,\FF_1)\subset\CC_*(\B,\FF)$, and
so $\phi\,\KK_*(\B_1,\FF_1)\subset\CC_*(\B_1,\FF_1)$. We therefore
have an induced map of complexes,
$$
\phi':\KK_{*-1}(\B_0,\FF_0)=\frac{\KK_*(\B,\FF)}{\KK_*(\B_1,\FF_1)}
\rightarrow
\frac{\CC_*(\B,\FF)}{\CC_*(\B_1,\FF_1)}=Q_*.
$$

\begin{lemma}\label{lem:phiprime}
Let $\pi:Q_* \ra \CC_{*-1}(\B_0,\FF_0)$ be the map 
defined in Section 3 by equation (\ref{eq:pi}) and $\phi,\phi'$ as above. Then
the following diagram of chain maps commutes.
$$
\xymatrix{
\KK_{*-1}(\B_0, \FF_0)\ar[r]^-{\phi^\prime} \ar[dr]_-{\phi} 
& Q_* \ar[d]^-{\pi}\\
& \CC_{*-1}(\B_0,\FF_0)
}
$$
\end{lemma}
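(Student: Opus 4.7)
The plan is to compute both sides of $\pi\circ\phi'=\phi$ explicitly on a typical summand $\lambda\in\FF(x)\subset\KK_{n-1}(\B_0,\FF_0)$ with $x\in\B_0$ of rank $r-n$. By the definition of $\phi$,
$$
\phi(\lambda)=\sum_\x\ve_\x\,\lambda\,\x^\circ\in\CC_n(\B,\FF),
$$
summed over saturated chains $\x=(x_1<_c\cdots<_c x_{n+1}=1_\B)$ in $\B$ with $x_1=x$, and $\phi'(\lambda)$ is the image of this element in $Q_n$. By the explicit formula (\ref{eq:pi}), $\pi$ kills each summand $\lambda\,\x^\circ=\lambda\,x_1\cdots x_n$ unless the underlying sequence lies entirely in $\B_0$ and terminates at $1_{\B_0}$, in which case it returns $\lambda\,x_1\cdots x_{n-1}$.

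The key structural observation is that with $\ell=1$ as in Example \ref{example:glueings}, the sublattice $\B_0\subset\B$ (joins not containing $a_1$) is downward closed, and $1_{\B_0}=a_2\vee\cdots\vee a_r$ is the unique coatom of $\B$ lying in $\B_0$. Since $x_n$ is a coatom of $\B$, the condition $x_n=1_{\B_0}$ already forces $\x^\circ\subset\B_0$, so the chains $\x$ that contribute to $\pi\phi'(\lambda)$ are precisely those of the form $\x'\cup\{1_\B\}$ for $\x'$ a saturated chain from $x$ to $1_{\B_0}$ in $\B_0$. Hence
$$
\pi\phi'(\lambda)=\sum_{\x'}\ve_{\x'\cup\{1_\B\}}\,\lambda\,{\x'}^\circ,
$$
the sum over all such $\x'$.

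The final step is a sign comparison. The observation in \S\ref{section:cubecomplex} that $\ve(1_{\B_0}<_c 1_\B)=1$ yields $\ve_{\x'\cup\{1_\B\}}=\ve^\B_{\x'}$, meaning the sign of $\x'$ computed in $\B$. For any cover $y<_c z$ in $\B_0$ one has $z=y\vee a_k$ for some $k\geq 2$, and since no element of $\B_0$ contains $a_1$, the number of atoms of $z$ preceding $a_k$ is the same in the ordering $a_1,\ldots,a_r$ of $\B$ as in the ordering $a_2,\ldots,a_r$ of $\B_0$. Hence $\ve^\B_{\x'}=\ve^{\B_0}_{\x'}$, and summing gives
$$
\pi\phi'(\lambda)=\sum_{\x'}\ve^{\B_0}_{\x'}\,\lambda\,{\x'}^\circ=\phi(\lambda)\in\CC_{n-1}(\B_0,\FF_0),
$$
as required. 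The whole argument is bookkeeping; the only subtlety is reconciling the two Khovanov sign conventions, which reduces to $a_1$ being first in the atom ordering and absent from $\B_0$.
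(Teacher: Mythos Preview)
Your argument is correct and follows the same route as the paper's proof: identify the saturated chains from $x$ to $1_\B$ that survive $\pi$ as those passing through $1_{\B_0}$, then peel off the final edge $1_{\B_0}<_c 1_\B$ using $\ve(1_{\B_0}<_c 1_\B)=1$. You are in fact slightly more careful than the paper in one place: you explicitly verify that the Khovanov signs $\ve^{\B}_{\x'}$ and $\ve^{\B_0}_{\x'}$ agree for chains $\x'\subset\B_0$ (because $a_1$ is absent from every element of $\B_0$ and sits first in the atom ordering), whereas the paper leaves this identification implicit.
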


Note that the $\phi$ that appears in the diagram is the $\phi$ associated
to the sublattice $\B_0$ (not $\B$). 

\begin{proof}
Let $x\in\B_0$ and $S$ be the set of all saturated sequences 
$\x=x_1\ldots x_jy_1\ldots y_{n-j}1=x_1<_c\ldots <_cx_j<_cy_1<_c\ldots <_c y_{n-j}<_c1$
in $\B$ with the $x_i\in\B_0$,
$y_i\in\B_1$ and $x_1=x$. Let $S'\subset S$ consist of those
saturated sequences of the form $x_1\ldots x_n1$, where the $x_i\in\B_0$,
$x_1=x$ and $x_n=1_0$,
the unique maximal element of $\B_0$. Then, for $\lambda\in\FF(x)$ we have
$$
\xymatrix{
\lambda\ar@{|->}[r]^-{\phi'}
&{\displaystyle \sum_{\x\in S}^{}\ve_\x
\lambda\,\x^\circ \ar@{|->}[r]^-{\pi}}
&{\displaystyle \sum_{\x\in S'}^{}\ve_\x\lambda\,x_1\ldots x_{n-1}}
}.
$$
Now, the $\ve_\x$ that appears on
the righthand side above satisfies
$\ve_\x=\ve(x_1<_c\cdots <_cx_{n-1}<_c 1_0<_c 1)=\ve(x_1<_c\cdots<_c 1_0)\ve(1_0<_c1)$,
which in turn is just $\ve(x_1<_c\cdots<_c 1_0)$, as $\ve(1_0<_c 1)=1$.
\qed
\end{proof}

In particular we have a commuting diagram in homology: $\phi_*=\pi_*\phi'_*$.
We now have everything we need for the,

\begin{proof}[of the Main Theorem]
The short exact sequences in \S\ref{section:les} and \S\ref{section:cubecomplex}
can be assembled into a diagram,
$$
\xymatrix{
0 \ar[r] 
&  \cK_{*}(\bB_1, \FF_1) \ar[r]  \ar[d]_\phi
& \kbc * \ar[r]  \ar[d]_\phi
& \cK_{*-1}(\bB_0, \FF_0) \ar[r]  \ar[d]^{\phi^\prime} 
&0 \\
0 \ar[r]
&  \cC_{*}(\bB_1, \FF_1) \ar[r]
& \cbc *  \ar[r]
&  Q_*  \ar[r]   & 0 
}
$$
where by definition, $\phi'$ is the map making the righthand square commute, while it
is easy to check that the left-hand square commutes. By the functorality of the long
exact sequence in homology, 
we have the following commutative diagram 
$$
\xymatrix{
\cdots \ar[r]  
& {\scriptstyle H^\diamond_{n}(\B_0, \FF_0)} \ar[r]^\delta \ar[d]^-{\phi^\prime_*} 
& {\scriptstyle H^\diamond_n(\B_1, \FF_1)} \ar[r]  \ar[d]^-{\phi_*} 
& {\scriptstyle H^\diamond_n(\B, \FF)} \ar[r] \ar[d]^-{\phi_*} 
& {\scriptstyle H^\diamond_{n-1}(\B_0, \FF_0)} \ar[r]^-\delta \ar[d]^-{\phi^\prime_*} 
& {\scriptstyle H^\diamond_{n-1}(\B_1, \FF_1)} \ar[r] \ar[d]^-{\phi_*} & \cdots \\
\cdots \ar[r] 
& {\scriptstyle H_{n+1}(Q_*)} \ar[r]^-\delta 
& {\scriptstyle H_n({\bB_1}, {\FF_1})} \ar[r]  
& {\scriptstyle H_n (\bB, c)} \ar[r]  
& {\scriptstyle H_n(Q_*)} \ar[r]^-\delta 
& {\scriptstyle H_{n-1}( {\bB_1}, {\FF_1})} \ar[r]  
& \cdots
}
$$
with exact rows. The proof then proceeds by induction on the rank, noting that
the result is obviously true for Boolean lattices of rank $1$. 
If $\B$ is rank $r+1$ then both $\B_0$ and $\B_1$ are rank $r$, so assuming the
result for rank $r$ gives that the second and fifth vertical maps in the above diagram are 
isomorphisms. Furthermore, the first and fourth maps are also isomorphisms: Lemma
\ref{lem:phiprime} gives that the $\phi'_*=\pi_*^{-1}\phi_*$, where $\phi$ is again an isomorphism 
because $\B_0$ has rank $r$, and $\pi$ is an isomorphism by Proposition \ref{prop:pi}.
By the $5$-lemma, the middle
map is thus an isomorphism too.
\qed
\end{proof}

The main theorem can be strengthened somewhat: if $P$ is a poset, then 
call $\FF:P\rightarrow\rmod$ a \emph{colouring by projectives\/} if $\FF(x)$
is a projective module for all $x\in P$. As 
the direct sum of projectives is projective, and
a quasi-isomorphism between bounded below chain complexes of projectives is
a homotopy equivalence (see, eg: \cite{Wiebel94}*{\S 10.4}), we get that,

\begin{corollary}
If $\FF$ is a colouring by projectives then $\phi:\KK_n(\B,\FF)\rightarrow\CC_n(\B,\FF)$ is 
a homotopy equivalence.
\end{corollary}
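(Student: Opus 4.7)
The plan is to leverage the Main Theorem together with the cited homological algebra fact verbatim: a quasi-isomorphism between bounded-below chain complexes of projective $R$-modules is a chain homotopy equivalence. So the work reduces to verifying these hypotheses for the map $\phi:\KK_*(\B,\FF)\ra\CC_*(\B,\FF)$ when $\FF$ is a colouring by projectives.

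First I would check the projectivity of each chain module. By definition of the cube complex,
\[
\KK_n(\B,\FF)=\bigoplus_{\rk x=r-n}\FF(x),
\]
which is a finite direct sum of $\FF(x)$'s, each projective by hypothesis; hence $\KK_n$ is projective. Similarly,
\[
\CC_n(\B,\FF)=\bigoplus_{x_1<\cdots<x_n}\FF(x_1),
\]
a (possibly infinite) direct sum of projectives, and such direct sums are projective. Boundedness below is then immediate: $\KK_n=0$ for $n<0$ (and in fact also for $n>r$), while $\CC_n=0$ for $n<0$ by construction in \S\ref{section:homology}.

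Next, I would invoke the Main Theorem (Theorem~\ref{thm:main}), which provides that $\phi$ is a quasi-isomorphism. Combining this with the two observations above, the standard homological algebra result recalled in \cite{Wiebel94}*{\S 10.4} applies and yields at once that $\phi$ is a chain homotopy equivalence, which is the desired conclusion. There is essentially no obstacle here: the Main Theorem has already done all of the genuine work, and the present corollary is a formal upgrade of quasi-isomorphism to homotopy equivalence under the extra projectivity hypothesis on the colouring.
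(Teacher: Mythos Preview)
Your proposal is correct and follows precisely the paper's own argument: both complexes are bounded below with projective chain modules (as direct sums of projectives), and the Main Theorem supplies the quasi-isomorphism, so the cited fact from \cite{Wiebel94}*{\S 10.4} upgrades $\phi$ to a homotopy equivalence. The only minor remark is that for a finite Boolean lattice the direct sum defining $\CC_n$ is actually finite, but this is immaterial to the argument.
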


In particular, as vector spaces are projective we have,

\begin{corollary}
If the ground ring $R$ of the colouring $\FF:\B\rightarrow\rmod$
is a field, then $\phi:\KK_n(\B,\FF)\rightarrow\CC_n(\B,\FF)$ is
a homotopy equivalence.
\end{corollary}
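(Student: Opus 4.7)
The plan is to derive this immediately from the preceding corollary. Recall that the previous corollary asserts that $\phi:\KK_n(\B,\FF)\rightarrow\CC_n(\B,\FF)$ is a homotopy equivalence whenever $\FF$ is a colouring by projectives, the proof of which rested on the two facts that (i) the main theorem shows $\phi$ is a quasi-isomorphism, (ii) a direct sum of projectives is projective, so that the chain modules $\KK_n$ and $\CC_n$ consist of projectives, and (iii) a quasi-isomorphism between bounded below complexes of projectives is a homotopy equivalence.

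To conclude from there, the only thing I need to observe is that when $R$ is a field, every object of $\rmod$ (that is, every $R$-vector space) is free and hence projective. Therefore any colouring $\FF:\B\rightarrow\rmod$ is automatically a colouring by projectives, and the previous corollary applies without modification.

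So the proof proposal is, in essence, a single sentence: when $R$ is a field, every $\FF(x)$ is projective as an $R$-module, so $\FF$ is a colouring by projectives and the preceding corollary gives that $\phi$ is a homotopy equivalence. There is no real obstacle here; one could alternatively observe even more directly that any quasi-isomorphism between chain complexes of vector spaces splits and so is automatically a homotopy equivalence, but invoking the previous corollary is the cleanest route given what has already been proved.
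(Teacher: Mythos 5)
Your proposal is correct and is exactly the paper's argument: the authors deduce this corollary from the preceding one with the single observation that vector spaces are projective, which is precisely your one-sentence reduction. Nothing further is needed.
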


\section{Normalisation for link homology}\label{sec:normalisation}
For the motivating example, namely the Khovanov colouring of a Boolean
lattice associated to a link diagram, the modules are in fact graded
and in order to obtain an invariant result some shifts are
required. We record these shifts here in order to minimize the potential
confusion arising from our grading conventions. 

Let $V$ be the graded Frobenius algebra used in the construction of
Khovanov homology and let $(\B,\FF)$ be the Boolean lattice associated
to a given link diagram coloured with the Khovanov colouring of Example \ref{ex:khovanov}. The
grading on $V$ induces an internal grading on the associated
complex. Using the convention that $(W_{*,*}[a,b])_{i,j}= W_{i-a,j-b}$, the shifted complex we wish to consider is $\tilde{\cS}_{*,*}(\B,\FF)= \cS_{*,*}(\B,\FF)[-n_+, n_+ - 2n_-]$, i.e.
$$
\tilde{\cS}_{i,j}(\B,\FF) = \cS_{i+n_+, j - n_+ + 2n_-}(\B,\FF)
$$
where $n_+$ and $n_-$ are the number of positive and negative crossings of the (oriented) diagram. The homology  $\tilde{H}_{*,*}(\B,\FF)$ is then a bigraded link invariant. To compare with the more usual
grading in Khovanov homology we have $KH^{i,j}\cong \tilde{H}_{-i,j}$.

%%%%%%%%%%%%%%%%%%%%%%%%%%%%%%%%%%%%%%%%%%%%%%%%%%%%%%%%%%%%%%%%%%%%%%%%%

%\bibliography{brent}
%\bibliographystyle{plain}

\section*{References}

\begin{biblist}

\bib{Bar-Natan02}{article}{
   author={Bar-Natan, Dror},
   title={On Khovanov's categorification of the Jones polynomial},
   journal={Algebr. Geom. Topol.},
   volume={2},
   date={2002},
   pages={337--370 (electronic)},
   issn={1472-2747},
   review={MR{1917056 (2003h:57014)}},
}

\bib{Bjorner95}{article}{
   author={Bj{\"o}rner, A.},
   title={Topological methods},
   conference={
      title={Handbook of combinatorics, Vol.\ 1,\ 2},
   },
   book={
      publisher={Elsevier},
      place={Amsterdam},
   },
   date={1995},
   pages={1819--1872},
   review={MR{1373690 (96m:52012)}},
}

%\bib{Bourbaki02}{book}{
%    author={Bourbaki, Nicolas},
%     title={Lie groups and Lie algebras. Chapters 4--6},
%    series={Elements of Mathematics (Berlin)},
%      note={Translated from the 1968 French original by Andrew Pressley},
% publisher={Springer-Verlag},
%     place={Berlin},
%      date={2002},
%     pages={xii+300},
%      isbn={3-540-42650-7},
%    review={MR1890629 (2003a:17001)},
%}

\bib{Curtis87}{book}{
   author={Curtis, Charles W.},
   author={Reiner, Irving},
   title={Methods of representation theory. Vol. II},
   series={Pure and Applied Mathematics (New York)},
   note={With applications to finite groups and orders;
   A Wiley-Interscience Publication},
   publisher={John Wiley \& Sons Inc.},
   place={New York},
   date={1987},
   pages={xviii+951},
   isbn={0-471-88871-0},
   review={MR{892316 (88f:20002)}},
}

\bib{Helme-Guizon05}{article}{
   author={Helme-Guizon, Laure},
   author={Rong, Yongwu},
   title={A categorification for the chromatic polynomial},
   journal={Algebr. Geom. Topol.},
   volume={5},
   date={2005},
   pages={1365--1388 (electronic)},
   issn={1472-2747},
   review={MR{2171813 (2006g:57020)}},
}

%\bib{Humphreys90}{book}{
%    author={Humphreys, James E.},
%     title={Reflection groups and Coxeter groups},
%    series={Cambridge Studies in Advanced Mathematics},
%    volume={29},
% publisher={Cambridge University Press},
%     place={Cambridge},
%      date={1990},
%     pages={xii+204},
%      isbn={0-521-37510-X},
%    review={MR1066460 (92h:20002)},
%}

\bib{Gelfand99}{book}{
   author={Gelfand, S. I.},
   author={Manin, Yu. I.},
   title={Homological algebra},
   note={Translated from the 1989 Russian original by the authors;
   Reprint of the original English edition from the series Encyclopaedia of
   Mathematical Sciences [{\it Algebra, V}, Encyclopaedia Math.\ Sci., 38,
   Springer, Berlin, 1994;  MR1309679 (95g:18007)]},
   publisher={Springer-Verlag},
   place={Berlin},
   date={1999},
   pages={iv+222},
   isbn={3-540-65378-3},
   review={MR{1698374 (2000b:18016)}},
}

\bib{Khovanov00}{article}{
   author={Khovanov, Mikhail},
   title={A categorification of the Jones polynomial},
   journal={Duke Math. J.},
   volume={101},
   date={2000},
   number={3},
   pages={359--426},
   issn={0012-7094},
   review={MR{1740682 (2002j:57025)}},
}

\bib{Orlik92}{book}{
   author={Orlik, Peter},
   author={Terao, Hiroaki},
   title={Arrangements of hyperplanes},
   series={Grundlehren der Mathematischen Wissenschaften [Fundamental
   Principles of Mathematical Sciences]},
   volume={300},
   publisher={Springer-Verlag},
   place={Berlin},
   date={1992},
   pages={xviii+325},
   isbn={3-540-55259-6},
   review={MR{1217488 (94e:52014)}},
}

\bib{Ozsvath07}{article}{
   author={Ozsv{\'a}th, Peter},
   author={Szab{\'o}, Zolt{\'a}n},
   title={A cube of resolutions for knot Floer homology},
   eprint={arXiv:math.SG/07053852}
}

\bib{MR2113019}{article}{
   author={Ozsv{\'a}th, Peter},
   author={Szab{\'o}, Zolt{\'a}n},
   title={Holomorphic disks and topological invariants for closed
   three-manifolds},
   journal={Ann. of Math. (2)},
   volume={159},
   date={2004},
   number={3},
   pages={1027--1158},
   issn={0003-486X},
   review={MR{2113019 (2006b:57016)}},
}

\bib{MR2065507}{article}{
   author={Ozsv{\'a}th, Peter},
   author={Szab{\'o}, Zolt{\'a}n},
   title={Holomorphic disks and knot invariants},
   journal={Adv. Math.},
   volume={186},
   date={2004},
   number={1},
   pages={58--116},
   issn={0001-8708},
   review={MR{2065507 (2005e:57044)}},
}

\bib{Stanley97}{book}{
    author={Stanley, Richard P.},
     title={Enumerative combinatorics. Vol. 1},
    series={Cambridge Studies in Advanced Mathematics},
    volume={49},
      note={With a foreword by Gian-Carlo Rota;
            Corrected reprint of the 1986 original},
 publisher={Cambridge University Press},
     place={Cambridge},
      date={1997},
     pages={xii+325},
      isbn={0-521-55309-1},
      isbn={0-521-66351-2},
    review={MR1442260 (98a:05001)},
}

\bib{Wiebel94}{book}{
   author={Weibel, Charles A.},
   title={An introduction to homological algebra},
   series={Cambridge Studies in Advanced Mathematics},
   volume={38},
   publisher={Cambridge University Press},
   place={Cambridge},
   date={1994},
   pages={xiv+450},
   isbn={0-521-43500-5},
   isbn={0-521-55987-1},
   review={MR{1269324 (95f:18001)}},
}

\end{biblist}
%\end{bibdiv}

\end{document}